\newcommand{\leftexp}[2]{{\vphantom{#2}}^{#1}{#2}}
\newcommand{\numberset}[1]{\ensuremath{\mathbb{#1}}} 
\newcommand{\calm}[1]{\mathcal{#1}}
\newcommand{\N}{\numberset{N}} 
\newcommand{\R}{\numberset{R}} 
\newcommand{\Z}[1]{\numberset{Z}_{#1}} 
\newcommand{\C}{\numberset{C}} 
\newcommand{\rp}[1]{\R\numberset{P}^{#1}}
\newcommand{\Sp}[1]{\ensuremath{\mathbb{S}^{#1}}}
\newcommand{\Stres}{\ensuremath{\mathbb{S}^3}}
\newcommand{\B}[1]{\mathbb{B}^{#1}}
\newcommand{\D}{\mathbb{D}}
\newcommand{\Lfg}{L_{f\bar{g}}}
\newcommand{\LF}{L_{F}}
\newcommand{\F}{\calm{F}}
\newcommand{\Fp}{\F^{\prime}}
\newcommand{\Lp}{L^{\prime}}
\newcommand{\Os}{\calm{O}}
\newcommand{\cc}{\calm{C}}
\newcommand{\BD}[1]{\mathbf{D}^{#1}}
\newcommand{\Ho}[2]{\mathrm{H}_{#1}(#2)}
\newcommand{\norm}[1]{\lVert #1\rVert}	
\newcommand{\va}[1]{|#1|}
\newcommand{\fbg}{f\bar{g}}
\newcommand{\fbgxy}{f(x,y)\overline{g(x,y)}}
\newcommand{\fbgplusr}{f(x,y)\overline{g(x,y)}+z^r}
\newcommand{\cP}{\calm{P}}
\newcommand{\mpfg}{\Phi_{\fbg}}
\newcommand{\mpp}{\Phi^{\prime}}
\newcommand{\gp}{\gamma^{\prime}}
\newcommand{\hp}{h^{\prime}}
\newcommand{\id}{id}
\newcommand{\pr}[1]{#1^{-1}}
\newcommand{\Ng}[1]{\calm{G}(#1)}
\newcommand{\W}{\calm{W}}
\newcommand{\Op}[1]{\calm{O}(#1)}
\newcommand{\mtp}[2]{#1^{#2}}
\newcommand{\ptp}[1]{\pi^{#1}}
\newcommand{\ostp}[1]{\Os^{(#1)}}
\newcommand{\otp}[1]{m^{(#1)}}
\newcommand{\gtp}[1]{g^{(#1)}}
\newcommand{\ltp}[1]{\lambda^{(#1)}}
\newcommand{\stp}[1]{\sigma^{(#1)}}
\newcommand{\ttp}[1]{t^{(#1)}}
\newcommand{\mc}[1]{\calm{#1}}
\newcommand{\kn}[2]{(\Sp{#1},#2)}
\newcommand{\br}{[r]}
\newcommand{\vc}[1]{\overrightarrow{#1}}
\newtheorem{thm}{Theorem}[section]
\newtheorem{prop}[thm]{Proposition}
\newtheorem{lem}[thm]{Lemma}
\newtheorem{cor}[thm]{Corollary}
\theoremstyle{definition}
\newtheorem{rmk}[thm]{Remark}
\newtheorem{exa}[thm]{Example}
\newtheorem{defin}[thm]{Definition}
\newcommand{\defi}[1]{\textbf{#1}}
\newcommand{\ie}{\textit{i.e.},\xspace}
\begin{document}
\title{The topology of real suspension singularities of type $\fbg+z^n$.}
\author[H. Aguilar-Cabrera]{Hayd\'ee Aguilar-Cabrera}
\address{Department of Mathematics, Columbia University\\
MC 4406\\
2990 Broadway\\
New York, NY\\
10027}
\email{langeh@gmail.com}
\date{\today}
\thanks{Research partially supported by CONACyT grants U55084 and J49048-F (Mexico), by ECOS-ANUIES grant M06-M02 (France-Mexico) and the Laboratorio Internacional Solomon Lefschetz (CNRS-CONACyT, France and Mexico).}
\keywords{Real singularities, Milnor fibration, Graph manifolds, Open book decomposition}
\subjclass[2010]{Primary 32S25,32S55; Secondary 32S50,57M27}
\begin{abstract}
In this article we study the topology of a family of real analytic germs $F \colon (\C^3,0) \to (\C,0)$
with isolated critical point at $0$, given by $F(x,y,z)=f(x,y)\overline{g(x,y)}+z^r$, where $f$ and $g$ are holomorphic, $r \in \Z{}^+$ and $r \geq 2$. We describe the link $L_F$ as a graph manifold using its natural open book decomposition, related to the Milnor fibration of the map-germ $\fbg$ and the description of its monodromy as a quasi-periodic diffeomorphism through its Nielsen invariants. Furthermore, such a germ $F$ gives rise to a Milnor fibration $\frac{F}{|F|} \colon \Sp{5} \setminus L_F \to \Sp{1}$. We present a join theorem, which allows us to describe the homotopy type of the Milnor fibre of $F$ and we show some cases where the open book decomposition of $\Sp{5}$ given by the Milnor fibration of $F$ cannot come from the Milnor fibration of a complex singularity in $\C^3$.
\end{abstract}

\maketitle

\section{Introduction}
For some years, there has been interest in the study of the topology of suspension hypersurface singularities of type $F=f+z^r$, where $f$ is a holomorphic function from $\C^2$ to $\C$ and $z \in \C$, see for instance \cite{MR0358797,MR2134278,MR0488073,MR1824957,MR1709489,MR1877769,MR1669948}.

In the real analytic case, in \cite{MR2922705} we were interested in the suspension singularities of type $F(x,y,z)=\overline{xy}(x^p+y^q)+z^r$ from $\C^3$ to $\C$ where $p,q,r \in \Z{}^{+}$, $p,q,r \geq 2$ and $\gcd(p,q)=1$. This family of functions has isolated singularity at the origin, we proved that the link $L_F$ is a Seifert manifold and we gave its Seifert invariants. Furthermore, we presented families of this kind of singularities such that the corresponding Milnor fibration does not give an open book decomposition of $\Sp{5}$ that comes from Milnor fibrations of complex singularities.

In this paper we are interested in a more general situation: given $f,g \colon (\C^2,0) \to (\C,0)$ holomorphic germs such that the real analytic germ $\fbg$ has isolated critical point at the origin, we define a new real analytic germ $F \colon (\C^3,0) \to (\C,0)$ given by $F(x,y,z)=f(x,y)\overline{g(x,y)}+z^r$ and we describe the topology of the link $L_F$ as a graph manifold.

Among the several works studying the topology of suspension singularities, \cite{MR1709489} is particularly relevant to this work. In \cite{MR1709489}, A.~Pichon studied the topology of the link $L_F$ for $F=f+z^r$ a suspension singularity, where $f$ is a reduced holomorphic germ from $\C^2$ to $\C$. Moreover, it is proved that for a $3$-manifold $M$, there exists, at most, a finite number of holomorphic reduced germs $f \colon (\C^2,0) \to (\C,0)$ such that $M$ is homeomorphic to the link $L_F$, where $F=f+z^r$.

In the present article we generalise the method given in \cite{MR1709489} in order to study the topology of the link $L_F$ for $F(x,y,z)=f(x,y)\overline{g(x,y)}+z^r$. The description of $L_F$ as a graph manifold is given in terms of $f$, $g$ and $r$, and it is based in the existence of a cyclic branched $r$-cover $\calm{P} \colon L_F \to \Sp{3}$ with the link $L_{\fbg}$ as the ramification locus.

In \cite{PichSea:barfg}, A.~Pichon and J.~Seade proved that given $f,g \colon (\C^2,0) \to (\C,0)$ holomorphic germs, if the germ $\fbg$ has isolated critical point, then its Milnor fibration $\mpfg$ is given by $\ensuremath{\mpfg=\frac{\fbg}{|\fbg|}}$. The monodromy $h$ of this Milnor fibration is a quasi-periodic diffeomorphism. Thus, through a pull-back diagram, we equip $L_F$ with an open book decomposition with open book fibration $\mpp$, such that a representative of the corresponding monodromy is the quasi-periodic diffeomorphism $h^r$. We compute the Nielsen invariants of the monodromy $h$ and then, using results given in \cite{MR1709489}, we compute the Nielsen invariants of the monodromy $h^r$. By Theorem~\ref{mero}, we obtain the Waldhausen decomposition of $L_F$ associated to the Waldhausen decomposition of $\Sp{3}$ given by the Milnor fibration $\mpfg$.

Furthermore, we prove that $F$ is a $d$-regular function (see \cite{MR2647448}) and by \cite[Theorem~5.3]{MR2647448} we have that $F$ has Milnor fibration $\ensuremath{\Phi_F=\frac{F}{|F|}}$. 
We also prove that if $F$ is of the type $F=f+z^r$, where $f$ is a $d$-regular real analytic function, then the pair $\kn{n+1}{\LF}$ is the  $r$-fold cyclic suspension of the knot $\kn{n-1}{L_f}$. This allows us to adapt \cite[Lemma~6.1]{MR0488073} into a Join Theorem for $d$-regular functions, which gives the homotopy type of the Milnor fibre of $F$ as the join of the Milnor fibre of $f$ and $r$ points.

Finally, we give the algorithm to compute the topology of the link $L_F$ as a graph manifold from the Milnor fibration $\mpfg$ and $r$. Moreover, we apply such algorithm in some examples and, as in \cite{MR2922705}, we show that in these examples the open book decomposition of $\Sp{5}$ given by the Milnor fibration $\Phi_F$ cannot come from Milnor fibrations of complex singularities from $\C^3$ to $\C$.

\section{Preliminaries on fibred plumbing links}\label{prel}
This section presents some results from \cite{NC96}, \cite{DBM}, \cite{MR1824957} and \cite{PichSea:barfg} on fibred plumbing links and their monodromy. For this, we present the concepts of plumbing manifolds, plumbing links and their representation by a graph (see for example \cite[\S4]{MR1709489}).

\begin{defin}
A \defi{plumbing manifold} $M$ is a $3$-manifold such that $M$ is boundary of a $4$-manifold $P(\Gamma)$ obtained by plumbing according to a plumbing tree $\Gamma$.
\end{defin}

\begin{defin}
A \defi{plumbing link} is a pair $(M,L)$ where $M$ is a plumbing manifold and $L = K_1 \cup \ldots \cup K_n$ is an oriented link in $M$ which is a union (possibly empty) of $\Sp{1}$-fibres of the plumbed $\D^2$-bundles. 
\end{defin}

Notice that each $K_i$ has a natural orientation as the boundary of a $\D^2$-fibre.  We denote by $-K_i$ the knot $K_i$ endowed with the opposite orientation. Then the oriented link $L$ will be denoted by  $L = \epsilon_1 K_1 \cup \ldots \cup \epsilon_n K_n$ where $\epsilon_i \in \{-1,+1\}$.

The homeomorphism class of the pair $(M,L)$ is given by the plumbing tree $\Gamma$ decorated with arrows in the following way: for each component $K_i$ of the link $L$, we attach an arrow weighted by the multiplicity $(\epsilon_i)$ to the vertex corresponding to the $\D^2$-bundle of which $K_i$ is a $\Sp{1}$-fibre. 

\begin{defin}\label{fiblink}
A \defi{fibred plumbing link} $(M,L)$ is a plumbing link $(M,L)$ such that there exists an open book fibration $\Phi : M \setminus L \to \Sp{1}$ with binding $L$. 
\end{defin}

The following theorem is a generalisation of a result of Eisenbud and Neumann (\cite[Th~11.2]{EN85}) reformulated in terms of plumbing links. In \cite{EN85} this result is proved for multilinks in  $\Z{}$-homology spheres and formulated in terms of splice diagrams. In terms of graph decompositions it is proved by Chaves (\cite[Th~2.2.10]{NC96}). For a short survey and proof, see \cite[Th.~2.11]{PichSea:barfg}.

\begin{thm}[{\cite[Th.~2.11]{PichSea:barfg}}]\label{fibration}
Let $(M,L)$ be a plumbing link with plumbing tree $\Gamma$ and intersection matrix $M_{\Gamma}$. Let $L = \epsilon_1 K_1 \cup \ldots \cup \epsilon_n K_n$ and let $v_1, \ldots, v_s$ be the vertices of \ $\Gamma$. Let 
\begin{equation*}
b(L)=(b_1,\ldots,b_s) \in \Z{}^{s} \ ,
\end{equation*}
where $b_i$ is the sum of the multiplicities $\epsilon_j$ carried by the arrows attached to the vertex $v_i$. Then $L$ is fibred if there exist $(m_1,\ldots,m_s) \in \Z{}^{s}$ such that the next two conditions hold:
\begin{enumerate}[i)]
\item The following system of equations is satisfied:\label{monsys}
\begin{equation}\label{monsyst}
M_{\Gamma} \leftexp{t}{(m_1,\ldots,m_s)} + \leftexp{t}{b(L)} = 0 \ ,
\end{equation}
where $\leftexp{t}(\cdot)$ means transposition,
\item for each node $v_j$ of \ $\Gamma$, the integer $m_j \neq 0$.\label{multdiffzero}
\end{enumerate}
\end{thm}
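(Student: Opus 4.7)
The plan is to construct an open book fibration $\Phi\colon M\setminus L\to \Sp{1}$ piece by piece, using the plumbing decomposition of $M$, and then to show that the hypotheses (\ref{monsys}) and (\ref{multdiffzero}) are exactly what is needed so that the local fibrations glue into a global one with binding $L$. Concretely, I would decompose $M$ into the compact pieces $M_i$ associated to the vertices $v_i$ of $\Gamma$; each $M_i$ is a circle bundle over a surface $F_i$ (a sphere with as many holes as the valence of $v_i$ plus the number of arrows at $v_i$), and the gluings along boundary tori are prescribed by the plumbing data (that is, by the off-diagonal entries of $M_{\Gamma}$ and the Euler numbers on the diagonal).

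On each piece $M_i$ I would look for a circle-valued map $\Phi_i\colon M_i\setminus(L\cap M_i)\to\Sp{1}$ whose restriction to a generic $\Sp{1}$-fibre of the bundle has degree $m_i$, and whose restriction to each boundary component of $F_i$ corresponding to an arrow with multiplicity $\epsilon_j$ winds once in a standard way around that binding component. The cohomological obstruction to the existence of such a $\Phi_i$ that also matches on every plumbing torus with the analogous data on the neighboring piece is precisely the linear system (\ref{monsyst}): the equation $M_{\Gamma}\,\leftexp{t}{(m_1,\dots,m_s)}+\leftexp{t}{b(L)}=0$ is the condition for the locally defined homomorphisms $H_1(\partial M_i)\to\Z{}$ determined by $(m_i,\epsilon_j)$ to agree on the glued tori, and hence to come from a global cohomology class in $H^1(M\setminus L;\Z{})$. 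Standard arguments (as in Eisenbud-Neumann \cite{EN85} for homology spheres and Chaves \cite{NC96} in the general graph case) then realise this class by a genuine map $\Phi$ to $\Sp{1}$.

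The remaining point is to see that $\Phi$ is actually a submersion outside $L$ and that $L$ is its binding. On a piece $M_i$ corresponding to a leaf or an arrow-carrying vertex this is immediate, since $\Phi_i$ can be taken to be the bundle projection composed with the obvious $S^1$-valued map on $F_i$ coming from the arrows. On a piece corresponding to a node, however, the base surface $F_i$ has at least three boundary components and one can only realise the prescribed boundary behaviour by a fibration $F_i\to\Sp{1}$ when the degree $m_i$ on the circle fibre is nonzero: otherwise the would-be fibration collapses a whole subbundle and $\Phi$ fails to be a submersion. This is exactly condition (\ref{multdiffzero}); with it in force, the local maps can be taken to be fibrations and patch together into the required open book.

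I expect the principal obstacle to lie in step two, namely giving a clean and coordinate-free justification that the system (\ref{monsyst}) is both necessary and sufficient for the boundary data of the $\Phi_i$'s to match across plumbing tori; this requires being careful with orientation conventions, with the identification of each boundary torus with the other piece (which involves swapping the roles of fibre and base circle), and with the signs $\epsilon_j$. Once that bookkeeping is set up, the construction of each $\Phi_i$ on a piece is standard, and the verification that the glued $\Phi$ is an open book fibration with binding $L$ follows as in \cite[Th.~11.2]{EN85} and \cite[Th.~2.2.10]{NC96}.
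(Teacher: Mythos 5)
The paper does not prove this theorem: it is quoted verbatim from \cite[Th.~2.11]{PichSea:barfg}, with the remark that it is a reformulation of \cite[Th.~11.2]{EN85} (for multilinks in $\Z{}$-homology spheres) and is proved for general graph manifolds by Chaves \cite[Th.~2.2.10]{NC96}. Your outline is correct and follows essentially the same route as those references --- reading the monodromical system \eqref{monsyst} as the condition that the assignment ``fibre of $V_i\mapsto m_i$, meridian of $K_j\mapsto\epsilon_j$'' descends to a class in $H^1(M\setminus L;\Z{})$, realising that class by fibrations on the Seifert pieces, and using $m_j\neq 0$ at the nodes to make the node pieces fibre --- so there is nothing to compare against within the paper itself, and the only caveat is that the hard analytic step (realising the cohomology class by genuine horizontal fibrations with matching boundary behaviour) is, as you acknowledge, delegated to \cite{EN85} and \cite{NC96}.
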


\begin{defin}
The system of equations \eqref{monsyst} is called the \defi{monodromical system} of $L$ (see \cite[Def~4.2]{MR1824957}).
\end{defin}

Note that the solution to the system of equations \eqref{monsyst} is unique when the matrix $M_{\Gamma}$ is non-degenerate; for example, when it is negative definite.

For each vertex $v_i$ of the plumbing tree $\Gamma$, let $V_i$ be the intersection of $M$ and the $\D^2$-fibre bundle corresponding to $v_i$. Then $V_i$ is a $\Sp{1}$-bundle. 

In the proof of Theorem~\ref{fibration} one obtains that any fibration $\phi \colon M \setminus L \to \Sp{1}$ can be modified by an isotopy in such a way that each fibre of $\phi$ is transverse to all the plumbing tori of $M$ and to all the $\Sp{1}$-fibres of any $V_i$ such that $m_i \neq  0$.

\begin{rmk}\label{qpm}
Let $\mathfrak{F}$ be a fibre of $\phi$, then the monodromy of the fibration $\phi$ admits a quasi-periodic representative $\mathfrak{h} \colon \mathfrak{F} \to \mathfrak{F}$  whose restriction to $\mathfrak{F}_i = \mathfrak{F} \cap V_i$  coincides with the first return map on $\mathfrak{F}_i$ of the fibres of $V_i$ endowed with the orientation $\epsilon_i K$, where $K$ is oriented as the boundary of a $\D^2$-fibre of a plumbed bundle, and where $\epsilon_i = \frac{m_i}{|m_i|}$.
\end{rmk}

In particular, one has: 
 
\begin{prop}\label{order}
The order of $\mathfrak{h}$ on $\mathfrak{F}_i$ equals $|m_i|$.
\end{prop}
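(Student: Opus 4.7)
The plan is to identify $|m_i|$ as the degree of the restriction of the fibration $\phi : M \setminus L \to \Sp{1}$ to an $\Sp{1}$-fibre of $V_i$. By the isotopy described just before Remark~\ref{qpm}, any fibre $\mathfrak{F}$ of $\phi$ may be taken transverse to all $\Sp{1}$-fibres of those $V_i$ with $m_i \neq 0$, so for any such fibre $K \subset V_i$ the restriction $\phi|_K : K \to \Sp{1}$ is a covering, and its degree equals the algebraic intersection number $K \cdot \mathfrak{F}$ in $M$.

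The key step is to show $K \cdot \mathfrak{F} = m_i$. This is precisely what the monodromical system \eqref{monsyst} encodes: the construction behind Theorem~\ref{fibration} expresses the cohomology class $[\phi] \in \Coh{1}{M \setminus L}$ in the basis dual to the meridional classes of the boundary components $K_j$ and of the $\Sp{1}$-fibres of the $V_i$, the coefficients being exactly $(\epsilon_1,\ldots,\epsilon_n)$ and $(m_1,\ldots,m_s)$; the linear relations imposed by the plumbing annuli translate into \eqref{monsyst} itself. Consequently each $\Sp{1}$-fibre of $V_i$ meets $\mathfrak{F}$ transversely in exactly $|m_i|$ points, all carrying the common sign $\epsilon_i = m_i/|m_i|$.

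By Remark~\ref{qpm}, $\mathfrak{h}|_{\mathfrak{F}_i}$ is the first-return map along the fibres of $V_i$ oriented by $\epsilon_i K$. Parameterise an $\Sp{1}$-fibre $K$ through a point $p \in \mathfrak{F}_i$ by $\R/\Z{}$. The $|m_i|$ points of $K \cap \mathfrak{F}_i$ partition $K$ into $|m_i|$ consecutive open arcs, and the first-return map, by definition, advances $p$ to the next intersection point along the chosen orientation. Iterating $|m_i|$ times therefore completes one full loop around $K$ and brings $p$ back to itself, so $(\mathfrak{h}|_{\mathfrak{F}_i})^{|m_i|} = \id$; no strictly smaller positive iterate can fix $p$ because the $\Sp{1}$-action is free on $K$, and a proper fractional traversal of $K$ cannot return to the starting point. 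Hence the order of $\mathfrak{h}|_{\mathfrak{F}_i}$ is exactly $|m_i|$.

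The main obstacle is the middle paragraph: the statement of Theorem~\ref{fibration} produces the $m_i$ only abstractly as solutions of \eqref{monsyst}, so one has to revisit its proof (or equivalently invoke Poincar\'e--Lefschetz duality on $M \setminus L$) to extract the geometric meaning of $m_i$ as the intersection number $K \cdot \mathfrak{F}$. Once this identification is granted, the remainder of the argument reduces to transversality together with the freeness of the $\Sp{1}$-action on a single fibre.
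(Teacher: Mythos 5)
Your proof is correct and follows essentially the same route the paper (implicitly) takes: Proposition~\ref{order} is presented there as an immediate consequence of Remark~\ref{qpm} together with the fact, imported from the proof of Theorem~\ref{fibration} in the cited references, that $m_i$ is the degree of $\phi$ restricted to an $\Sp{1}$-fibre of $V_i$. Your middle paragraph just makes that identification explicit, and the final first-return-map count is the standard justification of the remark.
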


\subsection{The Nielsen graph of the monodromy $\mathfrak{h}$.}\label{ngmh}
In this section we present the construction of the Nielsen graph of a quasi-periodic diffeomorphism. First, let us recall the concept of Nielsen graph for a periodic diffeomorphism (see also \cite[p.~347]{MR1824957}). This concept is based in the theory developed by Nielsen in \cite{Nielsen:strk} and \cite{MR0015791}.

Let $\calm{S}$ be an oriented, compact, connected surface and let $\tau \colon \calm{S} \rightarrow \calm{S}$ be an orientation preserving periodic diffeomorphism of order $m \geq 2$. Then $\tau$ generates an action of the group $\Z{m}$ on $\calm{S}$. 

Let $\Os$ be the orbit space of this action, \ie $\Os$ is the quotient of $\calm{S}$ under the following equivalence relation: given $x,y \in \calm{S}, x \sim y$ if and only if exists $k \in \Z{}$ such that $\tau^{k}(x)=y$. Thus $\Os$ is an orbifold of dimension $2$ homeomorphic to an orientable, compact, connected surface.

Let $\varpi \colon \calm{S} \rightarrow \Os$ be the projection onto the orbit space of $\tau$. There exists a finite number of points $p_1, \ldots, p_s$ whose orbits under $\tau$ are of cardinality $n_i < m$ with $i=1, \ldots, s$. Let $p_i \in \calm{S}$ be one of them, the orbit $\varpi^{-1}(p_i)$ is called an \textbf{exceptional orbit} of $\tau$. Then $\varpi$ is a cyclic branched $m$-covering whose ramification locus is the set of these exceptional orbits in $\Os$.

Let $\lambda_i \in \Z{}$ be defined as
\begin{equation*}
\lambda_i = \frac{m}{n_i} \geq 2 \ \text{for} \ i=1, \ldots, s \ .
\end{equation*}

If $\partial \calm{S} = \emptyset$, let $p_i \in \Os$ be a point representing an exceptional orbit $O_i \in \calm{S}$ of cardinality $n_i$ and let $\D_i$ be a small $2$-disc with centre $p_i$, \ie such that each $x \in \D_i \setminus \{p_i\}$ represents an orbit of cardinality $m$. Then $\pr{\varpi}(\D_i)$ consist of $n_i$ disjoint discs $D_{i,1}, \ldots, D_{i,n_i}$, which are cyclically exchanged by $\tau$. Let $D_{i,j}$ be one of them such that the disc $D_{i,j}$ is oriented as $\calm{S}$. Let us endow its boundary $\partial D_{i,j}$ with the induced orientation. Then $\tau|_{D_{i,j}}^{n_i} : D_{i,j} \rightarrow D_{i,j}$ is conjugate to a rotation of angle $2\pi\frac{\omega_i}{\lambda_i}$ with $0 < \omega_i < \lambda_i$ and $\omega_i$ prime relative to $\lambda_i$. The orientation convention for $D_{i,j}$ and its boundary is essential to obtain a well-defined angle. Let $\sigma_i \in \Z{\lambda}$ be such that $\omega_i \sigma_i \equiv 1 \pmod{\lambda_i}$.

\begin{defin}
The pair $(\lambda_i , \sigma_i)$  is the \textbf{valency} of $\tau$ at $p_i$ (or the valency of $\tau$ for the orbit $\pi(p_i)$).
\end{defin}

If $\calm{S}$ has a non-empty boundary, then $\partial \Os \neq \emptyset$. Let $\widehat \Os$ be the closed oriented surface obtained by attaching a $2$-disc $D'_i$ on each boundary component of $\Os$ and let $\widehat{\calm{S}}$ be the surface obtained by attaching a $2$-disc on each boundary component of $\calm{S}$. Let $\widehat \tau$ be the conical extension of $\tau$ to $\widehat{\calm{S}}$. It may be that $\widehat \tau$ is not differentiable at the centre of the new discs but this is unimportant.

Then $\widehat \Os$ is the orbit space of $\widehat \tau$, given by the action of the group $\Z{m}$ on the surface $\widehat{\calm{S}}$.

Let $p_i'$ be the centre of one of the discs $D'_i$. We define the valency for the orbit of a boundary component of $\Os$ as the valency of $\tau$ at $p_i'$ in the same way as for the exceptional orbits.

Notice that the boundary components of $\calm{S}$ are oriented as the boundary of the attached discs and not as the boundary of $\calm{S}$. 

From these valencies, one can construct a graph representing the diffeomorphism $\tau$: 

\begin{defin}
Let $\Ng{\tau}$ be the graph constructed in the following way:
\begin{itemize}
\item The graph $\Ng{\tau}$ has only one vertex, representing the surface $\Os$. This vertex is weighted by the pair $[m,g]$ where $m$ is the order of $\tau$ and $g$ is the genus of $\Os$,
\item we attach to the vertex of $\Ng{\tau}$ one \defi{stalk}\index{stalk} (\tikz[baseline] \filldraw[black] (0pt,3pt) -- (20pt,3pt) circle (1.2pt);) for each exceptional orbit and we weight it by the valency for the corresponding exceptional orbit,
\item we attach to the vertex of $\Ng{\tau}$ one \defi{boundary-stalk}\index{boundary stalk} (\tikz[baseline] \draw (0pt,3pt) -- (18.5pt,3pt) (20pt,3pt) circle (1.2pt);) for each boundary component of $\Os$ and we weight it by the valency for the corresponding boundary component.
\end{itemize}
The graph $\Ng{\tau}$ is called the \textbf{Nielsen graph} of the periodic diffeomorphism $\tau$ and the collection $\left( m,g, (\lambda_1, \sigma_1), \ldots, (\lambda_{s'}, \sigma_{s'})\right)$ consists of the \textbf{Nielsen invariants} of $\tau$.
\end{defin}

Figure~\ref{fig:ngrabs} shows the Nielsen graph $\Ng{\tau}$ of a diffeomorphism $\tau \colon \calm{S} \to \calm{S}$ of order $m$ with $s$ exceptional orbits, where $\Os$ has genus $g$ and $s'-s$ boundary components.

\begin{figure}[H]
\begin{center}
\begin{tikzpicture}[xscale=1,yscale=.8]
\draw (0cm,0cm) -- (-3.5cm,1.5cm);
\draw (0,0) -- (-3.5,-1.5);
\draw (0,0) -- (3.44,1.46);
\draw (0,0cm) -- (3.44,-1.46);
\draw (3.5,1.5) circle (2pt);
\draw (3.5,-1.5) circle (2pt);
\filldraw [black] 
(0,0) circle (4pt)
(-3.5,1.5) circle (2pt)
(-3.5,-1.5) circle (2pt);
\draw (-1.38cm,1.1cm) node{$(\lambda_1, \sigma_1)$} (-1.34cm,-1.1cm) node{$(\lambda_s, \sigma_s)$} (1.1cm,1.1cm) node{$(\lambda_{s+1}, \sigma_{s+1})$} (1.34cm,-1.1cm) node{$(\lambda_{s'}, \sigma_{s'})$} (0cm,-.4cm) node {$[m,g]$};
\draw[thick,loosely dotted] (-2cm,.5cm) arc (165:195:2cm) (2cm,-.5cm) arc (-15:15:2cm);
\end{tikzpicture}
\end{center}
\caption{A Nielsen graph with $s$ stalks and $s'-s$ boundary stalks.}
\label{fig:ngrabs}
\end{figure}
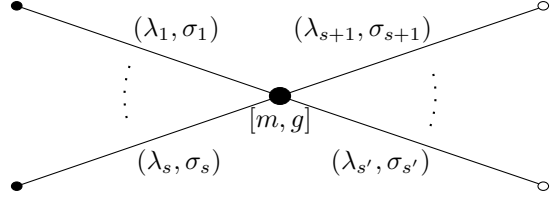

Let $(M,L)$ be a fibred plumbing link and let $\mathfrak{h} \colon \mathfrak{F} \to \mathfrak{F}$ be the quasi-periodic representative of the monodromy of the fibration $\phi$, mentioned in Remark~\ref{qpm}.

\begin{defin}[{\cite[\S~3]{MR1824957}}]
Given the quasi-periodic diffeomorphism $\mathfrak{h} \colon \mathfrak{F} \to \mathfrak{F}$, let $\cc$ be a family  of disjoint simple closed curves in $\mathfrak{F}$ such that
for each connected component $c \in \cc$ one can choose a small annulus $\mc{U}(c) \subset \mathfrak{F}$, tubular neighbourhood of $c$ with the following properties:
\begin{itemize}
\item[-] for any pair of distinct curves $c_i, c_j \in \cc$, we have that $\mc{U}(c_i) \cap \mc{U}(c_j) = \emptyset$,
\item[-] $\mc{U}(\cc) =\mathfrak{h}(\mc{U}(\cc))$ and $\mathfrak{h}(\cc)=\cc$,
\item[-] the restriction  of $\mathfrak{h}$ to the complement of
\begin{equation*}
\mathring{\mc{U}}(\cc) = \bigcup_{c \in \cc} \mathring{\mc{U}}(c)
\end{equation*}
is periodic, where $\mathring{\mc{U}}(c)$ is the interior of $\mc{U}(c)$.
\end{itemize}
The family $\cc$ is called a \textbf{reduction system} of curves for the diffeomorphism $\mathfrak{h}$.
\end{defin}

By \cite[page 348]{MR1824957} we can construct the Nielsen graph of the quasi-periodic diffeomorphism $\mathfrak{h}$ in the following way.

Let $\cc$ be a minimal reduction system for $\mathfrak{h}$. Let $\mc{G}_{\mathfrak{h}}$ be the following graph:
\begin{itemize}
\item $\mc{G}_{\mathfrak{h}}$ has one vertex for each connected component of $\mathfrak{F} \setminus \cc$,
\item let $\mathfrak{F}_i$ and $\mathfrak{F}_j$ be connected components of $\mathfrak{F} \setminus \cc$ such that there is a curve $c \in \cc$ with $c \subset \overline{\mathfrak{F}_i} \cap \overline{\mathfrak{F}_j}$, where $\overline{\mathfrak{F}_i}$ and $\overline{\mathfrak{F}_j}$ are the closures of $\mathfrak{F}_i$ and $\mathfrak{F}_j$ respectively. Then $\mc{G}_{\mathfrak{h}}$ has an edge between the vertices $u_i$ and $u_j$ corresponding to each such curve $c$.
\end{itemize}

Now, let $\mc{U}(\cc)$ be a small tubular neighbourhood of $\cc$ and let $\overline{\mc{G}_{\mathfrak{h}}}$ be the quotient graph of the induced action of $\mathfrak{h}$ on the graph $\mc{G}_{\mathfrak{h}}$, where one vertex $i$ of $\overline{\mc{G}_{\mathfrak{h}}}$ represents one connected component $\mathfrak{F}_i$ of $\mathfrak{F} \setminus \mc{U}(\cc)$ if $\mathfrak{h}(x) \in \mathfrak{F}_i$ for all $x \in \mathfrak{F}_i$, and a vertex $i$ represents $q_{i}$ (with $q_i > 1$) connected components of $\mathfrak{F} \setminus \mc{U}(\cc)$ if $\mathfrak{h}$ permutes cyclically these $q_{i}$ components.

Let $i$ be a vertex of $\overline{\mc{G}_{\mathfrak{h}}}$ such that $i$ represents $q_{i}$ connected components of $\mathfrak{F} \setminus \mc{U}(\cc)$, $\mathfrak{F}_{i,j}$ where $1 \leq j \leq q_i$. Let $\mathfrak{F}_{i,j}$ be one of them.

Let $\mathfrak{h}_{i}$ be the diffeomorphism defined by $\mathfrak{h}_{i}=\mathfrak{h}^{q_{i}} \colon \mathfrak{F}_{i,j} \to \mathfrak{F}_{i,j}$, then $\mathfrak{h}_{i}$ is a periodic diffeomorphism with order $m_{i}$. Then $\mathfrak{h}_i$ generates an action of the group $\Z{m_i}$ on $\mathfrak{F}_{i,j}$. Let $\Os_i$ be the orbit space of this action and let $g_{i}$ be the genus of the orbit space $\Os_{i}$ of $\mathfrak{F}_{i,j}$ by $\mathfrak{h}_{i}$.

For each vertex $i$ of $\overline{\mc{G}_\mathfrak{h}}$ we construct the Nielsen graph $\Ng{\mathfrak{h}_{i}}$ and we complete the numerical information by weighting each vertex with the number $q_{i}$ (see Figure~\ref{fig:ngrssep}).

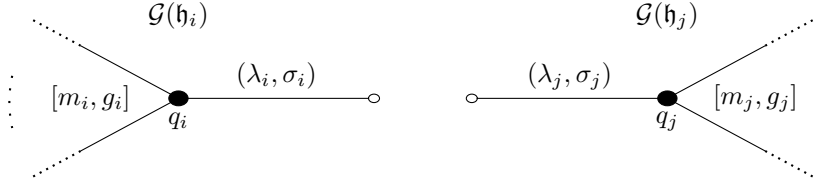
\begin{figure}[H]
\begin{center}
\begin{tikzpicture}[xscale=1.3,yscale=1]
\filldraw[black] (0cm,0cm) circle (2.8pt);
\draw (-1cm,.7cm) -- (0cm,0cm) -- (1.95cm,0cm) (2cm,0cm) circle (1.6pt) (-1cm,-.7cm) -- (0cm,0cm);
\draw[thick,dotted] (-1cm,.7cm) -- (-1.5cm,1.05cm) (-1cm,-.7cm) -- (-1.5cm,-1.05cm);
\filldraw[black] (5cm,0cm) circle (2.8pt);
\draw (6cm,.7cm) -- (5cm,0cm) -- (3.05cm,0cm) (3cm,0cm) circle (1.6pt) (6cm,-.7cm) -- (5cm,0cm);
\draw[thick,dotted] (6cm,.7cm) -- (6.5cm,1.05cm) (6cm,-.7cm) -- (6.5cm,-1.05cm);
\draw (1cm,.3cm) node {$(\lambda_i,\sigma_i)$}
(0cm,-.3cm) node {$q_{i}$}
(-.9cm,0cm) node {$[m_{i},g_{i}]$}
(4cm,.3cm) node {$(\lambda_j,\sigma_j)$}
(5cm,-.3cm) node {$q_{j}$}
(5.9cm,0cm) node {$[m_{j},g_{j}]$}
(0cm,1.1cm) node {$\Ng{\mathfrak{h}_{i}}$}
(5cm,1.1cm) node {$\Ng{\mathfrak{h}_{j}}$};
\draw[thick,loosely dotted] (-1.7cm,.3cm) arc (170:190:2cm) (6.7cm,-.3cm) arc (-10:10:2cm);
\end{tikzpicture}
\end{center}
\caption{Nielsen graphs $\Ng{\mathfrak{h}_{i}}$ and $\Ng{\mathfrak{h}_{j}}$.}
\label{fig:ngrssep}
\end{figure}

\begin{defin}[{\cite[page~347]{MR1824957}}]
Given the quasi-periodic diffeomorphism $\mathfrak{h} \colon \mathfrak{F} \to \mathfrak{F}$, let $\cc$ be a reduction system for $\mathfrak{h}$. Let $c \in \cc$ be a simple closed curve in $\mathfrak{F}$ and let $\mc{U}(c) \subset \mathfrak{F}$ be as before, then there exists an orientation preserving diffeomorphism $\mu \colon [-1,1] \times \Sp{1} \to \mc{U}(c)$ such that $\mu(\{0\} \times \Sp{1})=c$.

Let $N$ be the smallest integer such that
\begin{equation*}
\mathfrak{h}^{N}|_{\mathfrak{F} \setminus\mc{U}(\cc)} = \id_{\mathfrak{F} \setminus\mc{U}(\cc)} \ ,
\end{equation*}
then the restriction $\mathfrak{h}^{N}$ to $\mc{U}(c)$ is a Dehn twist and the restriction of $\mathfrak{h}$ is characterised by a rational number $t$ in the following way: Consider the path $\gamma$ in $\mc{U}(c)$ defined by $\gamma(s)= \mu(s,e^{i \theta})$ where $\theta$ is fixed and $s \in [-1,1]$. We orient $\gamma$ by $[-1,1]$ and then, we orient $c$ in such a way that $\gamma \cdot c = +1$ in $\Ho{1}{\mc{U}(c),\Z{}}$. Then there exists $K \in \Z{}$ such that the cycles $Kc$ and $\mathfrak{h}^{N}(\gamma)-\gamma$ are homologous in $\mc{U}(c)$.

The rational number $t= \frac{K}{N}$ is called the \textbf{twist number} of $\mathfrak{h}$ along $c$.
\end{defin}

Now, let $A$ be an edge of the graph $\overline{\mc{G}_{\mathfrak{h}}}$ connecting the vertices $i$ and $j$, \ie it represents a curve $c \in \cc$ such that $c \subset \overline{\mathfrak{F}_{i}} \cap \overline{\mathfrak{F}_{j}}$. Let $t$ be the twist of $\mathfrak{h}$ along $c$. The two boundary components of $\mc{U}(c)$ are represented by two boundary-stalks in the Nielsen graphs $\Ng{\mathfrak{h}_{i}}$ and $\Ng{\mathfrak{h}_{j}}$ respectively.

Then we take the disjoint union of the Nielsen graphs $\Ng{\mathfrak{h}_{i}}$ and $\Ng{\mathfrak{h}_{j}}$, and we replace the two boundary-stalks by a single edge joining the vertices of the Nielsen graphs. We weight this edge with the twist $t$ and the valencies of the eliminated boundary-stalks at its extremes (see Figure~\ref{fig:jngrs}).

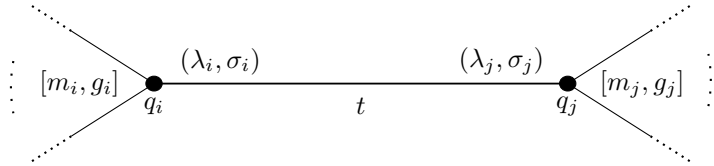
\begin{figure}[H]
\begin{center}
\begin{tikzpicture}[xscale=1.1,yscale=1]
\filldraw[black] (0cm,0cm) circle (2.8pt);
\draw (-1cm,.7cm) -- (0cm,0cm) (5cm,0cm) -- (6cm,.7cm) (-1cm,-.7cm) -- (0cm,0cm) (6cm,-.7cm) -- (5cm,0cm);
\draw[line width=.69pt] (0cm,0cm) -- (5cm,0cm); 
\draw[thick,dotted] (-1cm,.7cm) -- (-1.5cm,1.05cm) (-1cm,-.7cm) -- (-1.5cm,-1.05cm);
\filldraw[black] (5cm,0cm) circle (2.8pt);
\draw[thick,dotted] (6cm,.7cm) -- (6.5cm,1.05cm) (6cm,-.7cm) -- (6.5cm,-1.05cm);
\draw (.8cm,.3cm) node {$(\lambda_i,\sigma_i)$}
(0cm,-.3cm) node {$q_{i}$}
(-.9cm,0cm) node {$[m_{i},g_{i}]$}
(4.2cm,.3cm) node {$(\lambda_j,\sigma_j)$}
(5cm,-.3cm) node {$q_{j}$}
(5.9cm,0cm) node {$[m_{j},g_{j}]$}
(2.5cm,-.3cm) node {$t$};
\draw[thick,loosely dotted] (-1.7cm,.3cm) arc (170:190:2cm) (6.7cm,-.3cm) arc (-10:10:2cm);
\end{tikzpicture}
\end{center}
\caption{Joining the Nielsen graphs $\Ng{\mathfrak{h}_{i}}$ and $\Ng{\mathfrak{h}_{j}}$.}
\label{fig:jngrs}
\end{figure}

We repeat this process for all the edges in the graph $\overline{\mc{G}_{\mathfrak{h}}}$ and we obtain a new graph $\Ng{\mathfrak{h}}$.

The graph $\Ng{\mathfrak{h}}$ is the Nielsen graph of $\mathfrak{h}$.

\subsection{The Nielsen graph of the diffeomorphism $\mathfrak{h}^r$.}\label{ngmhr}

In order to describe the topology of the link $L_F$ as a graph manifold, we are interested to study the diffeomorphism $\mathfrak{h}^r$, given the quasi-periodic diffeomorphism $\mathfrak{h}$. The following results allow us to construct the Nielsen graph of the diffeomorphism $\mathfrak{h}^r$ from the Nielsen graph $\Ng{\mathfrak{h}}$. They are Lemma~2.2 and Lemma~2.3 of \cite{MR1709489} and their proofs appear there.

\begin{lem}\label{nihr}
Let $\mathfrak{h} \colon \mathfrak{F} \to \mathfrak{F}$ be a periodic diffeomorphism preserving the orientation of the surface $\mathfrak{F}$ and let $r \geq 2$ be an integer. Let $\pi \colon \mathfrak{F} \to \Os$ be the projection onto the orbit space $\Os$ of $\mathfrak{h}$. Let $\left( m,g, (\lambda_1, \sigma_1), \ldots, (\lambda_{s}, \sigma_{s}), (\lambda_{s+1}, \sigma_{s+1}) \ldots, (\lambda_{s'}, \sigma_{s'})\right)$ be the Nielsen invariants of $\mathfrak{h}$. Let $n=\gcd(m,r)$ and let $n_i=\gcd(m/ \lambda_i, r)$.

Let $\ptp{r} \colon \mathfrak{F} \to \ostp{r}$ be the projection onto the orbit space $\ostp{r}$ of $\mathfrak{h}^{r}$ and let
\begin{equation*}
\rho \colon \ostp{r} \to \Os
\end{equation*}
be the map defined by $\rho \circ \ptp{r} = \pi$. Then $\rho$ is a cyclic branched covering of 
$n$ leaves, whose ramification locus is included in the set of exceptional orbits of $\Os$. Moreover each exceptional orbit of $\Os$ with valency $(\lambda_i, \sigma_i)$ is a possible branching point of $\rho$ with order $\ensuremath{\frac{n}{n_i}}$.

The Nielsen invariants of $\mtp{\mathfrak{h}}{r}$ can be computed from the Nielsen graph $\Ng{\mathfrak{h}}$ as follows:
\begin{itemize}
\item[-] The order of $\mtp{\mathfrak{h}}{r}$ is $\ensuremath{\otp{r}}= \frac{m}{n}$.
\item[-] The genus of $\ostp{r}$ is
\vspace{-4pt}
\begin{equation*}
\gtp{r} = n(g-1) + 1 +\frac{1}{2} \sum_{i=1}^{s'} (n -n_i) \ .
\end{equation*}
\vspace{-2pt}
\item[-] The orbit space $\ostp{r}$ has a maximum of $\displaystyle \sum_{i=1}^{s} n_i$ 
exceptional orbits and $\displaystyle \sum_{i=s+1}^{s'} n_i$ 
boundary curves.
\item[-] To the $i-th$ exceptional orbit of $\Os$ correspond $n_i$ 
orbits of $\ostp{r}$ with $1 \leq  i \leq s$. To the $i-th$ boundary curve of $\Os$ correspond $n_i$ 
boundary curves of $\ostp{r}$ with $s+1 \leq  i \leq s'$. In any case, the valency $(\ltp{r}_i, \stp{r}_i)$ is given by:
\begin{equation}
\ltp{r}_i = \frac{m}{\lambda_i n_i} \quad \text{and} \quad \stp{r} \times \frac{r}{n} \equiv \sigma_i \pmod{\ltp{r}} \ .
\end{equation}
\end{itemize}
When $\ltp{r} = 1$ the corresponding orbit is regular, in any other case it is an exceptional orbit of $\ostp{r}$ (see Figure \ref{fig:ngrabs2}).
\end{lem}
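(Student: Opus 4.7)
The plan is to exploit the inclusion of cyclic subgroups $\langle\mathfrak{h}^r\rangle\subset\langle\mathfrak{h}\rangle$ of the $\Z{m}$-action on $\mathfrak{F}$. Since $n=\gcd(m,r)$, one has $\langle\mathfrak{h}^r\rangle=\langle\mathfrak{h}^n\rangle$, a subgroup of order $m/n$, which immediately gives $\otp{r}=m/n$. By abelianness the quotient $\langle\mathfrak{h}\rangle/\langle\mathfrak{h}^r\rangle\cong\Z{n}$ acts on $\ostp{r}=\mathfrak{F}/\langle\mathfrak{h}^r\rangle$ with quotient $\Os$, exhibiting $\rho$ as a cyclic branched $n$-fold cover; its branch points correspond to $\Z{n}$-fixed points in $\ostp{r}$, and these can only lie over $\mathfrak{h}$-orbits containing points with non-trivial $\langle\mathfrak{h}\rangle$-stabiliser, namely the exceptional orbits.

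Next I would count the preimages. For a lift $\tilde p_i\in\mathfrak{F}$ of an exceptional orbit $p_i$, the stabiliser is $\langle\mathfrak{h}^{m/\lambda_i}\rangle$, of order $\lambda_i$, so the preimages of $p_i$ in $\ostp{r}$ are in bijection with the double cosets $\langle\mathfrak{h}^r\rangle\backslash\langle\mathfrak{h}\rangle/\langle\mathfrak{h}^{m/\lambda_i}\rangle$; abelianness rewrites this as $\langle\mathfrak{h}\rangle/\langle\mathfrak{h}^{n_i}\rangle$, using $\gcd(r,m/\lambda_i)=n_i$, of order $n_i$. Hence $p_i$ has $n_i$ preimages, and comparing with the degree $n$ of $\rho$ (whose Galois property forces equal ramification at all preimages) gives ramification index $n/n_i$ at each. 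The analogous argument for a point on a boundary component of $\Os$ gives the corresponding count for boundary components of $\ostp{r}$.

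The genus formula then follows from Riemann--Hurwitz. Capping off $\Os$ and $\ostp{r}$ with discs produces closed surfaces, and $\rho$ extends to a cyclic branched $n$-cover $\widehat\rho$ whose branch points are the $s$ exceptional orbits of $\Os$ together with the $s'-s$ centres of the new discs, each contributing $n_i$ preimages of ramification index $n/n_i$. Riemann--Hurwitz then reads
\begin{equation*}
2-2\gtp{r}=n(2-2g)-\sum_{i=1}^{s'}(n-n_i),
\end{equation*}
which rearranges into the stated expression for $\gtp{r}$.

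Finally, for each preimage $q$ of $p_i$ the isotropy of a lift of $q$ in $\langle\mathfrak{h}^r\rangle$ is $\langle\mathfrak{h}^r\rangle\cap\langle\mathfrak{h}^{m/\lambda_i}\rangle=\langle\mathfrak{h}^{\lcm(n,m/\lambda_i)}\rangle$, the order of which delivers $\ltp{r}_i$. For the rotation datum $\stp{r}$, on a small disc about $\tilde p_i$ the map $\mathfrak{h}^{m/\lambda_i}$ rotates by $2\pi\omega_i/\lambda_i$ with $\omega_i\sigma_i\equiv 1\pmod{\lambda_i}$; the first-return map of $\mathfrak{h}^r$, equal to $(\mathfrak{h}^{m/\lambda_i})^{r/n_i}$, therefore rotates by $2\pi(r/n_i)\omega_i/\lambda_i$. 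Equating this with $2\pi\omega^{(r)}_i/\ltp{r}_i$ and multiplying by $\sigma_i\stp{r}$, using both defining congruences together with the divisibilities $n_i\mid n\mid r$ and $\ltp{r}_i\mid\lambda_i$, yields the congruence $\stp{r}(r/n)\equiv\sigma_i\pmod{\ltp{r}_i}$. I expect this last rotation-angle bookkeeping to be the main obstacle: one must track carefully which exponent lies in which cyclic group and reduce modulo the correct divisor to extract a well-defined congruence for $\stp{r}$ without spurious factors.
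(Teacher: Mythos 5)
The paper itself does not prove this lemma: it is quoted as Lemma~2.2 of \cite{MR1709489} with the remark that the proof appears there, so the only comparison available is with the expected standard argument. Your proposal is exactly that argument, and most of it is carried out correctly: identifying $\langle\mathfrak{h}^r\rangle=\langle\mathfrak{h}^n\rangle$ (giving $\otp{r}=m/n$), realising $\rho$ as the quotient of $\ostp{r}$ by $\Z{n}\cong\langle\mathfrak{h}\rangle/\langle\mathfrak{h}^r\rangle$, counting preimages of $p_i$ by the coset computation $\langle\mathfrak{h}\rangle/\langle\mathfrak{h}^{n_i}\rangle$ (which correctly uses $\gcd(r,m/\lambda_i)=\gcd(n,m/\lambda_i)=n_i$), invoking transitivity of the deck group to equalise the ramification indices, and applying Riemann--Hurwitz to the capped-off surfaces, which does rearrange to the stated genus formula. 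The rotation-angle derivation of the congruence $\stp{r}\cdot\frac{r}{n}\equiv\sigma_i\pmod{\ltp{r}_i}$ is also sound, given the divisibilities you list.

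There is, however, one step you glossed over, and it is precisely where your computation and the printed statement part ways. You assert that the order of $\langle\mathfrak{h}^r\rangle\cap\langle\mathfrak{h}^{m/\lambda_i}\rangle=\langle\mathfrak{h}^{\lcm(n,m/\lambda_i)}\rangle$ ``delivers $\ltp{r}_i$'' without evaluating it. Evaluating it gives $m/\lcm(n,m/\lambda_i)=\lambda_i\gcd(n,m/\lambda_i)/n=\lambda_i n_i/n$, which is \emph{not} the quantity $m/(\lambda_i n_i)$ appearing in the statement; the two differ in general. (Take $m=9$, $\lambda_i=9$, $r=5$, so $n=n_i=1$: your stabiliser has order $9$, while $m/(\lambda_i n_i)=1$. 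The paper's own third example records exactly this case, the valency $(9,-5)$ becoming $(9,8)$ under $\mathfrak{h}^5$, and its first example sends $(5,-2)$ to $(5,1)$ under $\mathfrak{h}^3$ where $\lambda_i n_i/n=5$ but $m/(\lambda_i n_i)=2$.) In fact $m/(\lambda_i n_i)$ is the cardinality of the new orbit, and the valency is $\otp{r}$ divided by it, i.e.\ $\lambda_i n_i/n$; the formula as printed is a typo. Your argument for $\stp{r}$ also tacitly relies on the corrected value, since it needs $\ltp{r}_i\mid\lambda_i$, which holds for $\lambda_i n_i/n$ (as $n_i\mid n$) but fails for $m/(\lambda_i n_i)$ (e.g.\ $2\nmid 5$ above). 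So the mathematics of your proposal is right, but as a proof of the statement as written it breaks at this step: you should have computed the order explicitly and flagged that what you actually establish is $\ltp{r}_i=\lambda_i n_i/n$.
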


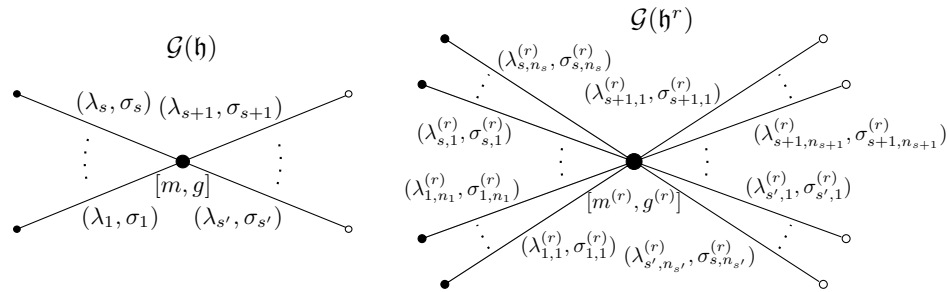
\begin{figure}[H]
\begin{center}
\begin{tikzpicture}
\begin{scope}[xscale=.63,yscale=.6]
\draw (0cm,0cm) -- (-3.5cm,1.5cm);
\draw (0,0) -- (-3.5,-1.5);
\draw (0,0) -- (3.44,1.46);
\draw (0,0cm) -- (3.44,-1.46);
\draw (3.5,1.5) circle (2pt);
\draw (3.5,-1.5) circle (2pt);
\filldraw [black] 
(0,0) circle (4pt)
(-3.5,1.5) circle (2pt)
(-3.5,-1.5) circle (2pt);
\draw (-1.45cm,1.3cm) node[scale=.9]{$(\lambda_s, \sigma_s)$} 
      (-1.32cm,-1.25cm) node[scale=.9]{$(\lambda_1, \sigma_1)$} 
      (.8cm,1.18cm) node[scale=.9]{$(\lambda_{s+1}, \sigma_{s+1})$} 
      (1.17cm,-1.25cm) node[scale=.9]{$(\lambda_{s'}, \sigma_{s'})$} 
      (0cm,-.55cm) node[scale=.9]{$[m,g]$};
\draw[thick,loosely dotted] (-2cm,.5cm) arc (165:195:2cm) (2cm,-.5cm) arc (-15:15:2cm);
\draw (0.2cm,2.5cm) node{$\Ng{\mathfrak{h}}$};
\end{scope}
\begin{scope}[xshift=6cm,scale=.75]
\filldraw (0,0) circle (4pt)
          (0cm,0cm) -- node[scale=.8,below right=0pt and -10pt]{$(\ltp{r}_{1,1}, \stp{r}_{1,1})$} (213:4cm) circle (2pt)
          (0cm,0cm) -- node[scale=.8,above left=-4pt and 0pt]{$(\ltp{r}_{1,n_1}, \stp{r}_{1,n_1})$} (200:4cm) circle (2pt)
          (0cm,0cm) -- node[scale=.8,below left=-5pt and 3pt]{$(\ltp{r}_{s,1}, \stp{r}_{s,1})$} (160:4cm) circle (2pt)
          (0cm,0cm) -- node[scale=.8,above right=8pt and -19pt]{$(\ltp{r}_{s,n_s}, \stp{r}_{s,n_s})$} (147:4cm) circle (2pt)
;
\draw[thick, loosely dotted] (211:3cm) arc (211:202:3cm)
                             (158:3cm) arc (158:149:3cm)
                             (191:1.3cm) arc (191:165:1.3cm);
\draw (0cm,0cm) -- node[scale=.8,above left=-5pt and -1pt]{$(\ltp{r}_{s+1,1}, \stp{r}_{s+1,1})$} (33:3.91cm)
      (33:4cm) circle (2pt)
      (0cm,0cm) -- node[scale=.8,below right=-5pt and 3pt]{$(\ltp{r}_{s+1,n_{s+1}}, \stp{r}_{s+1,n_{s+1}})$} (20:3.91cm)
      (20:4cm) circle (2pt)
      (0cm,0cm) -- node[scale=.8,above right=-4pt and 0pt]{$(\ltp{r}_{s',1}, \stp{r}_{s',1})$} (-20:3.91cm)
      (-20:4cm) circle (2pt)
      (0cm,0cm) -- node[scale=.8,below left=4pt and -14pt]{$(\ltp{r}_{s',n_{s'}}, \stp{r}_{s,n_{s'}})$} (-33:3.91cm)
      (-33:4cm) circle (2pt)
;
\draw (0cm,-.7cm) node[scale=.8] {$[\otp{r},\gtp{r}]$};
\draw[thick, loosely dotted] (-31:3cm) arc (-31:-22:3cm)
                             (22:3cm) arc (22:31:3cm)
                             (-11:1.3cm) arc (-11:15:1.3cm);
\draw (0.5cm,2.5cm) node{$\Ng{\mathfrak{h}^r}$};
\end{scope}
\end{tikzpicture}
\end{center}
\caption{The Nielsen graphs (in one vertex) $\Ng{\mathfrak{h}}$ and $\Ng{\mathfrak{h}^r}$.}
\label{fig:ngrabs2}
\end{figure}

\begin{lem}\label{twhr}
Let $\mathfrak{h} \colon \mathfrak{F} \to \mathfrak{F}$ be a quasi-periodic diffeomorphism and let $\cc$ be a minimal reduction system for $\mathfrak{h}$. Then the family $\cc$ is a reduction system of $\mtp{\mathfrak{h}}{r}$ and if $t$ and $\ttp{r}$ are the twists of $\mathfrak{h}$ and $\mtp{\mathfrak{h}}{r}$ respectively near to a curve $c \in \cc$, then $\ttp{r}=rt$.
\end{lem}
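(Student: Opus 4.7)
The first claim is essentially formal. The neighbourhood $\mc{U}(\cc)$ depends only on $\cc$, and the three defining properties of a reduction system transfer from $\mathfrak{h}$ to $\mathfrak{h}^r$ at once: the equalities $\mathfrak{h}^r(\cc)=\cc$ and $\mathfrak{h}^r(\mc{U}(\cc))=\mc{U}(\cc)$ follow from their versions for $\mathfrak{h}$, while if $\mathfrak{h}|_{\mathfrak{F}\setminus\mathring{\mc{U}}(\cc)}$ is periodic of order $N$, then $\mathfrak{h}^r|_{\mathfrak{F}\setminus\mathring{\mc{U}}(\cc)}$ is periodic of order $N/\gcd(N,r)$. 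Note that $\cc$ need not remain minimal for $\mathfrak{h}^r$, but minimality is not part of the definition.

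For the twist, denote by $N$ the minimal integer such that $\mathfrak{h}^N|_{\mathfrak{F}\setminus\mc{U}(\cc)}=\id$ and by $K\in\Z{}$ the integer with $\mathfrak{h}^N(\gamma)-\gamma \sim Kc$ in $\Ho{1}{\mc{U}(c),\Z{}}$, so that $t=K/N$. Let $N^{(r)}$ and $K^{(r)}$ denote the analogous integers for $\mathfrak{h}^r$. The first step of the computation is to identify $N^{(r)} = N/\gcd(N,r)$: any $M$ with $(\mathfrak{h}^r)^M|_{\mathfrak{F}\setminus\mc{U}(\cc)}=\id$ forces $N \mid rM$ and hence $M \geq N/\gcd(N,r)$, while conversely $(\mathfrak{h}^r)^{N/\gcd(N,r)}=(\mathfrak{h}^N)^{r/\gcd(N,r)}$ is the identity on the complement, the exponent $r/\gcd(N,r)$ being an integer.

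The remaining step is to iterate the Dehn twist in homology. On the annulus $\mc{U}(c)$ the map $\mathfrak{h}^N$ is a $K$-fold Dehn twist along $c$; it fixes $c$ and sends $\gamma$ to a curve homologous to $\gamma + Kc$. An immediate induction on $\ell$ gives $(\mathfrak{h}^N)^{\ell}(\gamma)-\gamma \sim \ell K c$ for every $\ell\in\Z{}$, and specialising to $\ell=r/\gcd(N,r)$ yields $(\mathfrak{h}^r)^{N^{(r)}}(\gamma)-\gamma \sim (rK/\gcd(N,r))\,c$. Hence $K^{(r)}=rK/\gcd(N,r)$ and $t^{(r)}=K^{(r)}/N^{(r)}=rK/N=rt$. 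The orientation conventions on $\gamma$ and $c$ are inherited directly from the setup, since they depend only on $\mc{U}(c)$ and not on the diffeomorphism, so no sign ambiguity arises. The only mildly subtle point in the proof is verifying that $N/\gcd(N,r)$ really attains the minimum defining $N^{(r)}$; everything else reduces to the linearity of a Dehn twist in homology under iteration.
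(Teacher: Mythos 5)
Your proof is correct, and it follows the natural (and essentially only) route: identify $N^{(r)}=N/\gcd(N,r)$ as the minimal exponent, use linearity of the Dehn twist in $\Ho{1}{\mc{U}(c),\Z{}}$ under iteration to get $K^{(r)}=rK/\gcd(N,r)$, and divide. The paper itself gives no proof of this lemma --- it defers to Lemma~2.3 of \cite{MR1709489} --- and your argument matches the standard computation carried out there, including the correct handling of the minimality of $N^{(r)}$ and the orientation conventions.
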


This last result completes the information of the graph $\Ng{\mathfrak{h}^r}$.

\subsection{Open book of the diffeomorphism $\mathfrak{h}^r$}\label{obmh}
Given a quasi-periodic diffeomorphism of a surface, one can construct the associated mapping torus and obtain an open book using the results given in this section. This open book with the corresponding binding is a fibred plumbing link.

Let us first show how to construct a graph describing the Waldhausen decomposition of a plumbing link. 

\begin{defin}
Let $M$ be a $3$-manifold. A \textbf{Waldhausen decomposition} of $M$ is a decomposition of $M$ as a union of a finite number of $3$-manifolds $M_i$, $M = \bigcup M_i$ such that
\begin{enumerate}
\item each $M_i$ is a Seifert manifold,
\item if $i \neq j$, the intersection $M_i \cap M_j$ is either empty or it is the union of the common boundary components, \ie a union of tori.
\end{enumerate}
\end{defin}

Let $(M,L)$ be a plumbing link. A way to represent $(M,L)$ according to the Waldhausen decomposition of $M$ is the following: Let $\W(M,L)$ be a graph constructed in the following way:

\begin{itemize}
\item The graph $\W(M,L)$ has a vertex $i$ for each Seifert component $M_i$ in the Waldhausen decomposition of $M$. 

\item For each exceptional fibre in $M_i \setminus L$ we attach to $i$ a stalk weighted by the corresponding normalised Seifert invariant $(\alpha, \beta)$, \ie $1 \leq \beta < \alpha$. 

\item For each Seifert fibre in $M_i \cap L$ we attach an arrow weighted by the corresponding normalised Seifert invariant $(\alpha, \beta)$, \ie $0 \leq \beta < \alpha$.

\item  The vertex is weighted by the genus $g_i$ of the orbit space of $M_i$ and the Euler obstruction $e(M_i)$ to be defined below.

\item Let $M_i$ and $M_j$ be two Seifert components in the Waldhausen decomposition of $M$ and let $i$ and $j$ be the corresponding vertices; there is an edge between $i$ and $j$ if and only if the intersection $M_i \cap M_j$ is not empty.

\item Each edge is oriented by the triplet $(\varepsilon, \alpha,\beta)$ (defined as in \cite[\S~1]{Neu:calcplumb}) in the following way: Let $T$ be a separating torus between two Seifert components $M_i$ and $M_j$ (represented by vertices $i$ and $j$ respectively). Let $\mc{U}(T)$ be a thickened torus, small neighbourhood of $T$ and let $T_i \subset M_i$ and $T_j \subset M_j$ be its boundary components. Let us orient $T_i$ and $T_j$ as the boundary of $\mc{U}(T)$. Let $b_i \subset T_i$ be a Seifert fibre of $M_i$ and let $a_i \subset T_i$ be a curve such that $a_i \cdot b_i = 1$ in $\Ho{1}{T_i, \Z{}}$. In the same way, we choose $a_j, b_j \subset T_j$ such that $a_j \cdot b_j = 1$ in $\Ho{1}{T_j, \Z{}}$. Let $g \colon T_i \to T_j$ be an orientation reversing diffeomorphism, induced by the product structure of \ $\overline{\mc{U}(T)}$. There exists some integers $\varepsilon \in \{-1, 1\}$, $\alpha > 0$ and $\beta, \beta' \in \Z{}$ such that
\begin{equation*}
\varepsilon \pr{g}(b_j)=\alpha a_i + \beta b_i  \ \ \text{in} \ \ \Ho{1}{T_i, \Z{}}
\end{equation*}
and
\begin{equation*}
\varepsilon g(b_i)=\alpha a_j + \beta' b_j  \ \  \text{in} \ \ \Ho{1}{T_j, \Z{}}.
\end{equation*}
Moreover, it is possible to choose the curves $a$ and $a'$ in such a way that the integers $\beta$ and $\beta'$ are normalised, \ie $0 \leq \beta < \alpha$ and $0 \leq \beta' < \alpha$. If $\alpha > 1$, $\beta$ and $\beta'$ satisfy the following relation:
\begin{equation*}
\beta \beta' \equiv 1 \pmod{\alpha} \ .
\end{equation*}
If $\alpha = 1$, then $\beta = \beta' = 0$.

Thus, if the edge is oriented from $i$ to $j$, the corresponding triplet is $(\varepsilon, \alpha,\beta)$; if the orientation is from $j$ to $i$, we write the triplet $(\varepsilon, \alpha,\beta')$.

\item Let $M_i$ be a Seifert component; for each boundary component of $M_i$ we have a normalised pair $(\alpha, \beta)$ which is related to the choice of a section on the boundary of $M_i$; the Euler obstruction $e(M_i)$ is the obstruction to extend that section to all $M_i$.
\end{itemize}

In Figure \ref{fig:wggl} it is shown the graph $\W(M,L)$ of a plumbing link $(M,L)$ with $s$ exceptional fibres in $M_i \setminus L$, $s'-s$ Seifert fibres in $M_i \cap L$ and Euler obstruction $e_i=e(M_i)$.

\begin{figure}[H]
\begin{center}
\begin{tikzpicture}[xscale=1.3,yscale=1]
\filldraw[black] (0cm,-5.5cm) circle (2.8pt) (-.6cm,-7.3cm) circle (1.6pt) (-1.7cm,-6.6cm) circle (1.6pt);
\draw (-.6cm,-3.7cm) -- (0cm,-5.5cm) -- (-.6cm,-7.3cm) 
(-1.7cm,-4.4cm) -- (0cm,-5.5cm) -- (-1.7cm,-6.6cm) 
(5cm,-4.8cm) --  (4cm,-5.5cm) -- (5cm,-6.2cm)
(-1.61cm,-4.65cm) -- (-1.7cm,-4.4cm) -- (-1.45cm,-4.4cm) (-.65cm,-3.9cm) -- (-.6cm,-3.7cm) -- (-.4cm,-3.8cm) (1.8cm,-5.4cm) -- (2cm,-5.5cm) -- (1.8cm,-5.6cm);
\draw[line width=.69pt] (0cm,-5.5cm) -- (4cm,-5.5cm); 
\filldraw[black] (4cm,-5.5cm) circle (2.8pt);
\draw[thick,dotted] (5cm,-4.8cm) -- (5.5cm,-4.45cm) (5cm,-6.2cm) -- (5.5cm,-6.55cm);
\draw (2cm,-5cm) node {$(\varepsilon,\alpha,\beta)$} (.17cm,-4.4cm) node [scale=.95] {$(\alpha_{s'},\beta_{s'})$} (.4cm,-5.8cm)  node {$[e_i,g_i]$} (-1.4cm,-5.3cm) node [scale=.95] {$(\alpha_{s+1}, \beta_{s+1})$} (-1.4cm,-5.9cm) node [scale=.95] {$(\alpha_{s}, \beta_{s})$} (.15cm,-6.6cm) node [scale=.95] {$(\alpha_1,\beta_1)$} (3.6cm,-5.8cm) node {$[e_j,g_j]$};
\draw[thick,loosely dotted] (-.8cm,-4.3cm) arc (124:134:1.5cm) (-.8cm,-6.7cm) arc (246:236:1.5cm) (5.7cm,-5.8cm) arc (-10:10:2cm);
\end{tikzpicture}
\end{center}
\caption{Graph $\W(M,L)$ of the plumbing link $(M,L)$.}
\label{fig:wggl}
\end{figure}
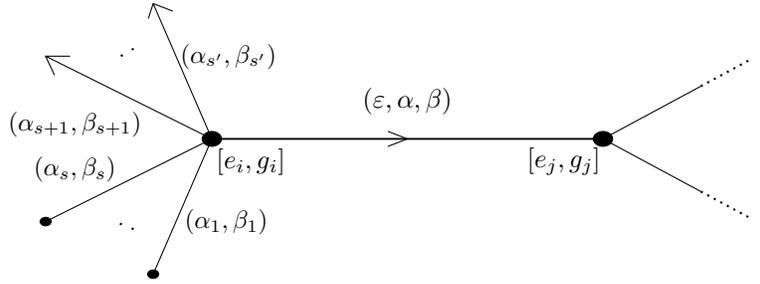

Now, given a periodic diffeomorphism $\tau \colon \calm{S} \to \calm{S}$ of a surface $\calm{S}$, we obtain the mapping torus $T(\tau)$ as a Seifert manifold. Recall that the mapping torus $T(\tau)$ of $\tau$ is the quotient of the product $\calm{S} \times [0,1]$ by the equivalence relation $(x, 1) \sim (\tau(x), 0)$ for all $x \in \calm{S}$.

The following result is proved in \cite[Section~4.4]{MR915761}.

\begin{lem}\label{Mont}
Let $\calm{S}$ be a surface without boundary and let $\tau \colon \calm{S} \to \calm{S}$ be a periodic diffeomorphism with $s$ exceptional orbits. Let $(\lambda_i,\sigma_i)$ be their valencies with $i=1,\ldots,s$ .

Then the mapping torus $T(\tau)$ is a Seifert manifold whose orbit space is the orbit space of $\tau$ and whose Seifert invariants are given as follows :
\begin{itemize}
\item  There are $s$ exceptional fibres whose Seifert invariants are $(\alpha_i,\beta_i) = (\lambda_i,\sigma_i)$ with  $i=1\ldots,s$,  
\item the  rational Euler number is $0$, so the integral Euler obstruction $e$ is given by : 
\begin{equation*}
e = \sum_{i=1}^s \frac{\sigma_i}{\lambda_i} \ .
\end{equation*}
\end{itemize}
\end{lem}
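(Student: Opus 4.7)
The plan is to build the Seifert fibration on the mapping torus $T(\tau)$ directly from the natural translation action on $\calm{S} \times [0,1]$, read the local invariants off the behaviour of $\tau$ near each exceptional orbit, and finish the Euler-number computation with a transverse-surface argument.

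First I would exhibit the Seifert structure. View $T(\tau) = (\calm{S} \times \mathbb{R})/\mathbb{Z}$ with the $\mathbb{Z}$-action $(x,s) \mapsto (\tau(x),s-1)$. The translation $t\cdot(x,s) = (x,s+t)$ commutes with this action and therefore descends to an $\mathbb{R}$-action on $T(\tau)$. A direct computation shows that the isotropy at $[(x,s)]$ is generated by the cardinality of the $\tau$-orbit of $x$, so every orbit is a circle and the action factors through a circle action. Its orbit space is the orbit space $\Os$ of $\tau$, the regular Seifert fibres sit above regular $\tau$-orbits, and the $s$ exceptional $\tau$-orbits give rise to the $s$ exceptional Seifert fibres.

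Next, to determine $(\alpha_i,\beta_i)$ I would restrict to a small $\tau^{n_i}$-invariant disc $D$ about a point $p_i$ of the $i$-th exceptional orbit, where $n_i = m/\lambda_i$. By the definition of the valency, $\tau^{n_i}|_D$ is a rotation by $2\pi\omega_i/\lambda_i$ with $\omega_i\sigma_i \equiv 1 \pmod{\lambda_i}$. The preimage in $T(\tau)$ of a small neighbourhood of $\pi(p_i)$ is canonically identified with the mapping torus of $\tau^{n_i}|_D$, i.e.\ the standard fibred solid torus obtained from $D \times [0,1]$ by the gluing $(x,1) \sim (R_{2\pi\omega_i/\lambda_i}(x),0)$. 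The classical computation of normalised Seifert invariants for such a fibred solid torus (see for instance \cite{MR915761}, \S 4.4) yields the pair $(\lambda_i,\sigma_i)$, matching the valency of $\tau$ at $p_i$.

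For the Euler data I would use the mapping-torus projection $p \colon T(\tau) \to \Sp{1}$. Every fibre $p^{-1}(\theta)$ is a copy of $\calm{S}$, and a Seifert fibre above a $\tau$-orbit of cardinality $n$ projects to $\Sp{1}$ with degree $n>0$; hence each $\calm{S}$-fibre is a closed oriented surface meeting every Seifert fibre transversally. The existence of such a closed horizontal surface forces the rational Euler number of the Seifert fibration to vanish. Substituting this vanishing into the standard relation between the integer Euler obstruction, the rational Euler number and the corrections $\beta_i/\alpha_i$ then yields $e = \sum_{i=1}^{s}\sigma_i/\lambda_i$.

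The main obstacle is the bookkeeping of conventions. One must fix consistently: the orientation of the $\Sp{1}$-orbits (induced by the positive $\mathbb{R}$-direction), the orientation of the valency discs $D$ used in the definition of $(\lambda_i,\sigma_i)$, the normalisation $0 \leq \beta_i < \alpha_i$, and the sign convention in the formula linking integer and rational Euler numbers. Each of these choices dictates whether the local Seifert invariant reads as $\omega_i$ or its modular inverse $\sigma_i$, and whether the final expression for $e$ appears with a plus or minus sign. Once the conventions of the earlier sections of the paper are tracked carefully, the identification $(\alpha_i,\beta_i) = (\lambda_i,\sigma_i)$ and the formula for $e$ both drop out of the local model together with the transverse-surface argument.
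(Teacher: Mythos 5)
Your argument is correct and is essentially the standard one: the paper gives no proof of this lemma, deferring entirely to Montesinos \cite[Section~4.4]{MR915761}, and your construction (the translation flow on $\calm{S}\times\mathbb{R}$ descending to a circle action with orbit space $\Os$, the local fibred solid torus model yielding $(\lambda_i,\sigma_i)$ as the modular inverse of the rotation number $\omega_i$, and the closed horizontal surface $p^{-1}(\theta)$ forcing the rational Euler number to vanish) is precisely the argument of that reference. The only delicate point is the one you already flag, namely the orientation and normalisation conventions that make the local invariant come out as $\sigma_i$ rather than $\omega_i$ and fix the sign in the formula for $e$.
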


Returning to the case of the quasi-periodic diffeomorphism $\mathfrak{h} \colon \mathfrak{F} \to \mathfrak{F}$, we have that the mapping torus $T(\mathfrak{h})$ has boundary $\partial \mathfrak{F} \times \Sp{1}$. Let $\lambda \in \Sp{1}$, then $\partial \mathfrak{F} \times \{\lambda\}$ is the boundary of a manifold $\mathfrak{F}_{\lambda} \subset T(\mathfrak{h})$ diffeomorphic to $\mathfrak{F}$.
Let
\begin{equation*}
\Op{\mathfrak{h}}= T(\mathfrak{h}) \bigcup_{\partial \mathfrak{F} \times \Sp{1}} (\partial \mathfrak{F} \times \D^{2}) \ ,
\end{equation*}
let $L(\mathfrak{h})= (\partial \mathfrak{F} \times \{0\}) \subset (\partial \mathfrak{F} \times \D^{2})$ and let
\begin{equation*}
\pi_{\mathfrak{h}} \colon (\Op{\mathfrak{h}} \setminus L(\mathfrak{h})) \to \Sp{1}
\end{equation*}
be such that $\pi_{\mathfrak{h}} \left(\left(\partial \mathfrak{F} \times (0,\lambda]\right) \bigcup_{\{\partial \mathfrak{F} \times \lambda\}} \mathfrak{F}_{\lambda}\right)= \lambda$.
 
Then, one can see $\pi_{\mathfrak{h}}$ as an open book fibration of $\Op{\mathfrak{h}}$.

The following result gives information on this open book and it is an adaptation of \cite[Lemma 4.4]{MR1824957}.

\begin{thm}\label{mero}
Let $\mathfrak{h} \colon \mathfrak{F} \to \mathfrak{F}$ be a quasi-periodic diffeomorphism with Nielsen graph $\Ng{\mathfrak{h}}$. Then the pair $(\Op{\mathfrak{h}},L(\mathfrak{h}))$ is a plumbing link (moreover, it is a fibred link) whose corresponding graph $\W(\Op{\mathfrak{h}},L(\mathfrak{h}))$ is obtained as follows. There exists a graph isomorphism from $\Ng{\mathfrak{h}}$ to $\W(\Op{\mathfrak{h}},L(\mathfrak{h}))$ sending:
\begin{itemize}
\item[-] the vertices of $\Ng{\mathfrak{h}}$ to the vertices of $\W(\Op{\mathfrak{h}},L(\mathfrak{h}))$,
\item[-] the edges of $\Ng{\mathfrak{h}}$ to the edges of $\W(\Op{\mathfrak{h}},L(\mathfrak{h}))$,
\item[-] the stalks of $\Ng{\mathfrak{h}}$ to the stalks of $\W(\Op{\mathfrak{h}},L(\mathfrak{h}))$,
\item[-] the boundary-stalks of $\Ng{\mathfrak{h}}$ to the arrows of $\W(\Op{\mathfrak{h}},L(\mathfrak{h}))$.
\end{itemize}
Moreover, 
\begin{itemize}
\item consider a vertex of $\Ng{\mathfrak{h}}$ with genus $g$, order $m$ and with neighbour valencies $(\lambda_i,\sigma_i), i=1,\ldots,s''$ (taking into account all the incident edges, including those corresponding to stalks and boundary-stalks). Then the corresponding  vertex of $\W(\Op{\mathfrak{h}},L(\mathfrak{h}))$ is weighted by $[g,e]$ where the integral Euler obstruction $e$ is given by : 
\begin{equation}\label{f1}
e = \sum_{i=1}^{s''} \frac{\sigma_i}{\lambda_i} \ ,
\end{equation}
\item  for each stalk of $\Ng{\mathfrak{h}}$ with valency $(\lambda_i,\sigma_i)$ (with $1 \leq i \leq s$), the corresponding exceptional fibre has Seifert invariant
\begin{equation}\label{f2}
(\alpha_i,\beta_i) = (\lambda_i,\sigma_i) \ ,
\end{equation}
\item for each boundary-stalk of $\Ng{\mathfrak{h}}$ with valency $(\lambda_i,\sigma_i)$, twist $t_i$ and order $m_i$ in the adjacent vertex (with $s+1 \leq i \leq s'$), the corresponding Seifert invariant is 
\begin{equation}\label{NiWa}
(\alpha_i,\beta_i) = \left(|t_i \lambda_i| \, , \, - \frac{t_i}{|t_i|} \cdot \frac{1-m_it_i\sigma_i}{m_i}\right) \ ,
\end{equation}
\item for each edge of $\Ng{\mathfrak{h}}$, the triplet of the corresponding edge oriented  from left to right is: 
\begin{equation}\label{f3}
(\varepsilon_k,\alpha_k,\beta_k) = \left(- \frac{t_k}{|t_k|} \, , \, |m'_kt_k \lambda_k | \, , \, - \frac{t_k}{|t_k|} \cdot \frac{m'_k-m_i m'_k t_k \sigma_k}{m_i}\right) \ .
\end{equation}
Notice that for each of the three previous equalities, there exists a choice of $\sigma$ in its class modulo $\lambda$ such that the corresponding pair $(\alpha,\beta)$ is normalised, \ie $0 \leq \beta < \alpha$. Let us fix such integers $\sigma$. 
\end{itemize} 
\end{thm}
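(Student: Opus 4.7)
The plan is to reduce the quasi-periodic situation to the periodic one piece-by-piece along a minimal reduction system $\cc$ for $\mathfrak{h}$, and then glue. First I would observe that $\mathfrak{F} \setminus \mathring{\mc{U}}(\cc)$ decomposes into the invariant pieces indexed by the vertices of $\overline{\mc{G}_{\mathfrak{h}}}$: for a vertex $i$ representing $q_i$ cyclically permuted components $\mathfrak{F}_{i,1},\ldots,\mathfrak{F}_{i,q_i}$, the union $\bigcup_j \mathfrak{F}_{i,j}$ is $\mathfrak{h}$-invariant, and the mapping torus of $\mathfrak{h}$ restricted to this union is canonically diffeomorphic to the mapping torus $T(\mathfrak{h}_i)$ of the periodic diffeomorphism $\mathfrak{h}_i = \mathfrak{h}^{q_i}\colon \mathfrak{F}_{i,1}\to \mathfrak{F}_{i,1}$. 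This identification together with Lemma~\ref{Mont} shows that each such piece $N_i \subset \Op{\mathfrak{h}}$ is a Seifert manifold whose base orbifold is $\Os_i$, and gives all the interior exceptional fibres: they come from the interior exceptional orbits of $\mathfrak{h}_i$ and carry Seifert invariants $(\alpha_i,\beta_i) = (\lambda_i,\sigma_i)$, proving~\eqref{f2}.

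Next I would analyse the boundary-stalks of $\Ng{\mathfrak{h}}$, which correspond to boundary components of $\mathfrak{F}$. These are capped off in the construction of $\Op{\mathfrak{h}}$ by the solid torus piece $\partial \mathfrak{F} \times \D^2$, so each such boundary circle contributes a binding component (an arrow of $\W(\Op{\mathfrak{h}},L(\mathfrak{h}))$). To obtain the Seifert pair in~\eqref{NiWa}, I would fix a fibred neighbourhood of a binding component and compute the normalised Seifert invariant of the core by writing a meridian of the glued solid torus in the $(a,b)$-basis of the torus fibre, where $b$ is the Seifert $\Sp{1}$-fibre and $a$ a section. The twist $t_i$ controls how the framing induced by the mapping-torus structure differs from the framing of the product $\partial \mathfrak{F} \times \Sp{1}$; combining this with the valency data $(\lambda_i,\sigma_i)$ of the nearby periodic diffeomorphism on the adjacent Seifert piece (which has order $m_i$) yields $(\alpha_i,\beta_i) = (|t_i\lambda_i|, -\frac{t_i}{|t_i|}\cdot \frac{1 - m_i t_i \sigma_i}{m_i})$ after normalisation, which is~\eqref{NiWa}.

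For the edges of $\Ng{\mathfrak{h}}$ I would do the analogous computation for an internal reduction curve $c \in \cc$. Here the plumbing torus arises as $\partial \mc{U}(c) \times \Sp{1}$ after passage to the mapping torus, and there are \emph{two} Seifert-fibre bases, one inherited from each side. I would fix bases $(a_i,b_i)$ and $(a_j,b_j)$ on the two sides as in the definition of $\W(\Op{\mathfrak{h}},L(\mathfrak{h}))$, write the gluing map $g$ in these bases using (a) the twist $t_k$, (b) the orders $m_i, m_j$ on each side, (c) the valencies $(\lambda_k,\sigma_k)$ of $c$ as seen from each side, and (d) the multiplicities $m'_k$ recording how the Seifert fibre on one side wraps relative to the common torus. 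Matching $\varepsilon \pr{g}(b_j) = \alpha a_i + \beta b_i$ and its analogue then yields~\eqref{f3}. Finally, the Euler obstruction~\eqref{f1} follows because the rational Euler number of each $T(\mathfrak{h}_i)$ is zero (Lemma~\ref{Mont}), and then summing the fractional parts of the normalised Seifert pairs across all incident edges, stalks and boundary-stalks at the vertex gives the integral Euler number as claimed.

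The main obstacle is the bookkeeping in the two gluing computations, \eqref{NiWa} and~\eqref{f3}: one has to be very careful with orientations (the conventions for $\D^2$-fibre boundaries, for $\mc{U}(c)$ and for $\partial \mathfrak{F}$ are all fixed but distinct), with the distinction between the periodic piece $\mathfrak{h}_i$ of order $m_i$ and the ambient $\mathfrak{h}$, and with ensuring that the output pairs are normalised so that $0 \le \beta < \alpha$. Once the signs and the choice of $\sigma$ modulo $\lambda$ are fixed consistently, the formulas drop out by linear-algebra manipulations identical to those in \cite[Lemma~4.4]{MR1824957}, which is why this step is labelled an adaptation rather than an independent proof.
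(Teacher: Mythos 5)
Your proposal follows essentially the same route as the paper: formulas \eqref{f1} and \eqref{f2} are deduced from Lemma~\ref{Mont} applied to the periodic pieces cut out by the reduction system, while \eqref{NiWa} and \eqref{f3} come from the homological gluing computation on the separating tori that the paper isolates as Lemma~\ref{useful} (the relation $mb' = -mm't\lambda a + (m'-mm't\sigma)b$ in $\Ho{1}{T(\mathfrak{h}),\Z{}}$), itself an adaptation of \cite[Lemma~4.4]{MR1824957}, to which you also defer for the final linear algebra. The differences are purely presentational, so your argument matches the paper's proof.
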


\begin{figure}[H]
\begin{center}
\begin{tikzpicture}[xscale=1.45,yscale=1.3]
\begin{scope}
\filldraw[black] (0cm,0cm) circle (2.8pt) (-.6cm,-1.8cm) circle (1.6pt) (-1.7cm,-1.1cm) circle (1.6pt);
\draw (-.58cm,1.74cm) -- (0cm,0cm) -- (-.6cm,-1.8cm) (-1.66cm,1.076cm) -- (0cm,0cm) -- (-1.7cm,-1.1cm) (5cm,-.7cm) --    (4cm,0cm) -- (5cm,.7cm) (-1.7cm,1.1cm) circle (1.6pt) (-.6cm,1.8cm) circle (1.6pt); 
\draw[line width=.69pt] (0cm,0cm) -- (4cm,0cm); 
\filldraw[black] (4cm,0cm) circle (2.8pt);
\draw[thick,dotted] (5cm,.7cm) -- (5.5cm,1.05cm) (5cm,-.7cm) -- (5.5cm,-1.05cm);
\draw (.57cm,.2cm) node {$(\lambda_k,\sigma_k)$}
      (1.5cm,.5cm) node[scale=.8] {$k=s'+1, \ldots,s''$}
     (.15cm,1.1cm) node [scale=.95] {$(\lambda_{s'},\sigma_{s'})$} (.4cm,-.3cm)  node {$[m_i,g_i]$} (-.3cm,0cm) node {$q_i$} (-1.5cm,.4cm) node [scale=.95] {$(\lambda_{s+1}, \sigma_{s+1})$} (-.6cm, 2.05cm) node {$t_{s'}$} (-1.95cm,1.25cm) node {$t_{s+1}$} 
(-1.3cm,-.4cm) node [scale=.95] {$(\lambda_{s}, \sigma_{s})$} (.15cm,-1.1cm) node [scale=.95] {$(\lambda_1,\sigma_1)$}
(3.2cm,.3cm) node {$(\lambda'_k,\sigma'_k)$}
(4cm,-.3cm) node {$r'_k$}
(4.9cm,0cm) node {$[m'_k,g'_k]$}
(2cm,-.3cm) node {$t_k$};
\draw[thick,loosely dotted] (-.8cm,1.2cm) arc (124:134:1.5cm) (-.8cm,-1.2cm) arc (246:236:1.5cm) (5.7cm,-.3cm) arc (-10:10:2cm);
\end{scope}
\draw[line width=1.5pt,yshift=.5cm,scale=.9] (2cm,-2cm) -- (2cm,-3.5cm) 
     (1.8cm,-3.2cm) -- (2cm,-3.5cm) -- (2.2cm,-3.2cm);
\begin{scope}[yshift=1.3cm]
\filldraw[black] (0cm,-5.5cm) circle (2.8pt) (-.6cm,-7.3cm) circle (1.6pt) (-1.7cm,-6.6cm) circle (1.6pt);
\draw (-.6cm,-3.7cm) -- (0cm,-5.5cm) -- (-.6cm,-7.3cm) 
(-1.7cm,-4.4cm) -- (0cm,-5.5cm) -- (-1.7cm,-6.6cm) 
(5cm,-4.8cm) --  (4cm,-5.5cm) -- (5cm,-6.2cm)
(-1.61cm,-4.65cm) -- (-1.7cm,-4.4cm) -- (-1.45cm,-4.4cm) (-.65cm,-3.9cm) -- (-.6cm,-3.7cm) -- (-.4cm,-3.8cm) (1.8cm,-5.4cm) -- (2cm,-5.5cm) -- (1.8cm,-5.6cm);
\draw[line width=.69pt] (0cm,-5.5cm) -- (4cm,-5.5cm); 
\filldraw[black] (4cm,-5.5cm) circle (2.8pt);
\draw[thick,dotted] (5cm,-4.8cm) -- (5.5cm,-4.45cm) (5cm,-6.2cm) -- (5.5cm,-6.55cm);
\draw (2cm,-5cm) node {$(\varepsilon_k,\alpha_k,\beta_k)$} 
                  (2.96cm,-4.7cm) node[scale=.8] {$k=s'+1, \ldots,s''$}  
(.15cm,-4.4cm) node [scale=.95] {$(\alpha_{s'},\beta_{s'})$} (.4cm,-5.8cm)  node {$[e_i,g_i]$} (-1.5cm,-5.1cm) node [scale=.95] {$(\alpha_{s+1}, \beta_{s+1})$} (-1.3cm,-5.9cm) node [scale=.95] {$(\alpha_{s}, \beta_{s})$} (.15cm,-6.6cm) node [scale=.95] {$(\alpha_1,\beta_1)$} (3.6cm,-5.8cm) node {$[e'_k,g'_k]$};
\draw[thick,loosely dotted] (-.8cm,-4.3cm) arc (124:134:1.5cm) (-.8cm,-6.7cm) arc (246:236:1.5cm) (5.7cm,-5.8cm) arc (-10:10:2cm);
\end{scope}
\end{tikzpicture}
\end{center}
\caption{Isomorphism between the graphs $\Ng{\mathfrak{h}}$ and $\W(\Op{\mathfrak{h}},L(\mathfrak{h}))$.}
\label{fig:isngrwgr}
\end{figure}

Formulae \eqref{f1} and \eqref{f2} are consequences of the Lemma \ref{Mont}. Formula \eqref{f3} is proved in \cite[Lemma 4.4]{MR1824957} as consequence of the following result. Moreover, this lemma, which is an adaptation of a part of \cite[Lemma 4.4]{MR1824957}, enables us to prove also \eqref{NiWa}.

\begin{lem}\label{useful}
Let $A = [-1,1] \times \Sp{1}$ be an annulus with boundary components $c = \{-1\} \times \Sp{1}$ and $c' = \{1\} \times \Sp{1}$ and let $\mathfrak{h} \colon A \to A$ be an orientation preserving diffeomorphism such that 
\begin{itemize}
\item $\mathfrak{h}(c)=c$ and $\mathfrak{h}|_{c}$ is periodic of order $m$,
\item $\mathfrak{h}(c')=D'$ and $\mathfrak{h}|_{c'}$ is periodic of order $m'$.
\end{itemize}
Let $T(\mathfrak{h})$ be the mapping torus of $\mathfrak{h}$ and let $T = c \times \Sp{1}$ and $T' =  c' \times \Sp{1}$ be the boundary components of $T(\mathfrak{h})$. 
Let $b \subset T$ be the curve which is the image in $T(\mathfrak{h})$ of the union of segments
\begin{equation*}
\bigcup_{i=1}^{m} (-1,\mathfrak{h}^{i}(\lambda)) \times [0,1] \ ,
\end{equation*}
where $((-1,\mathfrak{h}^{i}(\lambda)) \times [0,1]) \subset (A \times [0,1])$ and let $b' \subset T'$ be the curve which is the image in $T(\mathfrak{h})$ of the union of segments
\begin{equation*}
\bigcup_{i=1}^{m'} (1,\mathfrak{h}^{i}(\lambda)) \times [0,1] \ .
\end{equation*}
Then the following equation holds
\begin{equation}
mb' = -mm't \lambda a + (m' - mm't \sigma)b \quad \text{in} \quad \Ho{1}{T,\Z{}} \ ,
\end{equation}
where $a$ is a curve on $T$ such that $a \cdot b = 1$ in $\Ho{1}{T, \Z{}}$ and $a'$ is a curve on $T'$ such that $a' \cdot b' = 1$ in $\Ho{1}{T, \Z{}}$.
\end{lem}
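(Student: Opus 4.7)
The plan is to compute both $b$ and $b'$ as deck-group elements in the universal cover of $T(\mathfrak{h})$ and then solve for $mb'$ in the basis $(a,b)$ of $\Ho{1}{T,\Z{}}$, using an identification $\Ho{1}{T,\Z{}}\cong\Ho{1}{T(\mathfrak{h}),\Z{}}$ induced by the inclusion.

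First I would lift $A$ to its universal cover $\tilde A=[-1,1]\times\R$ and pick a lift $\tilde{\mathfrak{h}}$ of $\mathfrak{h}$ normalised so that $\tilde{\mathfrak{h}}(-1,\theta)=(-1,\theta+2\pi\omega/m)$, where $\omega\sigma\equiv 1\pmod{m}$ is the rotation data for $\mathfrak{h}|_c$. The restriction of this lift to the line over $c'$ is then forced to be $\tilde{\mathfrak{h}}(1,\theta)=(1,\theta+2\pi\omega'/m'+2\pi n)$ for a unique integer $n$. A direct comparison of $\tilde{\mathfrak{h}}^N(\tilde\gamma)$ with $\tilde\gamma$, for $N$ a common multiple of $m$ and $m'$, identifies the twist of $\mathfrak{h}$ along the core of $A$ as $t=\omega'/m'-\omega/m+n$, so $n$ encodes exactly the twist.

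Next, I would observe that the universal cover $\widetilde{T(\mathfrak{h})}=\tilde A\times\R$ is contractible, and its deck group is the free abelian group $\langle D_1,D_2\rangle\cong\Z{}^{2}$ generated by $D_1\colon(s,\theta,t)\mapsto(s,\theta+2\pi,t)$ and $D_2\colon(s,\theta,t)\mapsto(\tilde{\mathfrak{h}}^{-1}(s,\theta),t+1)$. The inclusions $T,T'\hookrightarrow T(\mathfrak{h})$ induce isomorphisms on first homology matching the deck generators, so one may freely transfer homology classes between $\Ho{1}{T,\Z{}}$, $\Ho{1}{T',\Z{}}$ and $\Ho{1}{T(\mathfrak{h}),\Z{}}$. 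Lifting $b$ (respectively $b'$) segment by segment in the universal cover shows that the endpoints of a continuous lift differ by the deck element $D_1^{\omega}D_2^{m}$ (respectively $D_1^{\omega'+m'n}D_2^{m'}$); hence $[b]=\omega D_1 + m D_2$ and $[b']=(\omega'+m'n)D_1 + m' D_2$.

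Finally, writing $\omega\sigma-\ell m=1$ and choosing $a$ as a Bezout combination of $D_1,D_2$ with $a\cdot b=1$ under the intersection form on $T$, I would express $mb'$ in the basis $(a,b)$. Substituting the identity $mm'n=mm't+m'\omega-m\omega'$ coming from the twist computation reduces the problem to a $2\times 2$ linear system whose solution is exactly $mb' = -mm't\lambda\,a + (m'-mm't\sigma)\,b$, with $\lambda=m$ the valency of the boundary-stalk corresponding to $c$. The hardest part will be the coherent tracking of orientation conventions — the sign of the twist, the orientations of $a$ and $b$, and the chosen sign for $D_1\cdot D_2$ on $T$; fixing these to agree with the convention $\gamma\cdot c=+1$ used in the definition of the twist is what produces the minus signs in the coefficients of $a$ and of $mm't\sigma$. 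The algebraic core is then a single Bezout identity together with the substitution for $n$, but getting every sign right is the main obstacle.
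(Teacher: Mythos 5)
Your argument is sound and arrives at the right identity, but it takes a genuinely different route from the paper's. The paper never passes to the universal cover: it takes the arc $\gamma$ across the annulus, notes that by the very definition of the twist number $\mathfrak{h}^{mm'}(\gamma)-\gamma=mm'tc$ in $\Ho{1}{A,\Z{}}$, and then observes that the trace of $\gamma$ under $mm'$ turns of the suspension flow is a $2$-chain in $T(\mathfrak{h})$ whose boundary is $m'b-mb'+\gamma-\mathfrak{h}^{mm'}(\gamma)$; this yields $mb'=m'b-mm'tc$ in one stroke, and substituting $c=\lambda a+\sigma b$ finishes. Your route instead computes $[b]=\omega D_1+mD_2$, $[b']=(\omega'+m'n)D_1+m'D_2$ and $t=\omega'/m'-\omega/m+n$ explicitly in the deck group, and recovers $mb'-m'b=\pm mm't\,[c]$ by cancellation of the $D_2$-terms --- the same intermediate identity, obtained from normal forms and rotation numbers rather than from a bounding $2$-chain. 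What your version buys is an explicit formula for the twist in terms of the chosen lift; what it costs is the need to put $\mathfrak{h}|_{c}$ and $\mathfrak{h}|_{c'}$ into rotation normal form (strictly, they are only conjugate to rotations, though the integers $\omega$ and $\omega'+m'n$ are still well defined as the translation amounts of $\tilde{\mathfrak{h}}^{m}$ and $\tilde{\mathfrak{h}}^{m'}$ on the respective boundary lines, so this is cosmetic) together with a heavier burden of sign conventions, which you rightly flag as the delicate point. Your closing Bezout step is exactly the relation $c=\lambda a+\sigma b$ with $\lambda=m$ in disguise, so from that point on the two proofs coincide.
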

\begin{proof}
Let $d = \{0\} \times \Sp{1} \subset A$, we orient $d$ in such a way that $d$ and $c$ are homologous in $\Ho{1}{A, \Z{}}$, where $c$ is oriented as boundary of $A$. Let $\gamma = [0,1] \times \{0\} \subset A$ be oriented as $[0,1]$. Then $\gamma$ is transverse to $d$ and we have
\begin{equation}\label{twist}
\mathfrak{h}^{mm'}(\gamma) - \gamma = mm'td = mm'tc
\end{equation}
in $\Ho{1}{A, \Z{}}$.

On the other hand, the cycle $m'b- mb' + \gamma - \mathfrak{h}^{mm'}(\gamma)$ is the boundary of a $2$-chain in $T(\mathfrak{h})$. Then, from \eqref{twist}, we obtain
\begin{equation}\label{tw2}
mb' = m'b - mm'td \ \ \text{in} \ \ \Ho{1}{T(\mathfrak{h}), \Z{}} \ .
\end{equation}
For $a$ and $b$, we have the following relation:
\begin{equation}\label{tw3}
c = \lambda a + \sigma b \ \ \text{in} \ \ \Ho{1}{T, \Z{}} \ .
\end{equation}
Combining \eqref{twist}, \eqref{tw2} and \eqref{tw3} we obtain
\begin{equation*}
mb' = - mm't \lambda a + (m'-mm't \sigma) b
\end{equation*}
in $\Ho{1}{T(\mathfrak{h}), \Z{}}$.
\end{proof}

\begin{proof}[Proof of equation \eqref{NiWa}]
Let us take a boundary-stalk of $\Ng{\mathfrak{h}}$ with valency $(\lambda,\sigma)$, twist $t$ and order $m$ in the adjacent vertex. As we know, the diffeomorphism $\mathfrak{h}$ is the identity on the boundary of $\mathfrak{F}$. Then, applying Lemma \ref{useful}, we obtain
\begin{equation*}
b = - t \lambda a' + (1-mt \sigma) b' \quad \text{in} \quad \Ho{1}{T(\mathfrak{h}),\Z{}}
\end{equation*}
and on the other hand we have the following equality:
\begin{equation*}
\varepsilon b = \alpha a' + \beta b' \quad \text{in} \quad \Ho{1}{T',\Z{}} \ .
\end{equation*}
From these last two equations we get
\begin{equation*}
\alpha = |t \lambda| \quad \text{and} \quad \beta = - \frac{t}{|t|} \cdot \frac{1-mt\sigma}{m} ,
\end{equation*}
where $\varepsilon = \ensuremath{-\frac{t}{|t|}}$.
\end{proof}

\section{The link $L_F$ as an open book}\label{Prim}
In this section we present a family of real analytic functions $F \colon \C^3 \to \C$ with isolated singularity at the origin and we describe the link $\LF$ as an open book.

Let $f,g \colon (\C^2,0) \to (\C,0)$ be two complex analytic germs such that the real analytic germ $\fbg \colon (\C^2,0) \to (\C,0)$ has an isolated singularity at the origin. Consider the real analytic function $F \colon \C^3 \to \C$  given by
\begin{equation*}
F(x,y,z)=f(x,y)\overline{g(x,y)}+z^r
\end{equation*}
with $r \in \N$.

The description of $L_F$ as an open book is given in terms of the monodromy of the Milnor fibration of $\fbg$.

By \cite[Proposition 1]{MR2922705}, the function $F$ has an isolated singularity at the origin since the function $f\overline{g}$ has an isolated singularity at the origin. Let $L_F= \Sp{5} \cap \pr{F}(0)$ denote the link of the singularity.

For $\epsilon > 0$ sufficiently small, let $\Lfg=\pr{(\fbg)}(0) \cap \Stres_{\epsilon}$ be the link of $\fbg$. We set $\Lfg$ as the oriented link $L_f - L_g$ as in \cite{PichSea:barfg}.

By \cite[Theorem~5.8]{PichSea:barfg}, the function $\fbg$ has Milnor fibration with projection $\mpfg = \frac{\fbg}{\mid \fbg \mid}$; \ie the Milnor fibration $\mpfg$ is an open book fibration of $\Stres_{\epsilon}$ with binding $\Lfg$.

Let $\varepsilon$ be such that $\D^6_{\varepsilon}$ is a Milnor ball for $F$. Let $\varepsilon^{\prime}$ be such that for all $(x,y,z) \in \pr{F}(0)$ with $(x,y) \in \D^4_{\varepsilon^{\prime}}$ we have 
\begin{equation*}
|f(x,y)\overline{g(x,y)}|^{1/r} < \varepsilon \ .
\end{equation*}

Let us consider the polydisc
\begin{equation*}
\BD{6} = \{(x,y,z) | (x,y) \in \D^4_{\varepsilon^{\prime}}, |z| \leq \varepsilon \} \ .
\end{equation*}

By \cite[Proposition~1.7 and Application~3.8]{durfee:neighalgs} $\LF$ is homeomorphic to the intersection $\pr{F}(0) \cap \partial \BD{6}$. In the sequel we will also denote this intersection by $\LF$.

The following proposition is an adaptation of \cite[Proposition~1.5]{MR1709489}, which deals with the case $f(x,y)+z^k$ where $f$ is holomorphic and reduced.

\begin{prop}\label{diagc}
Let $\cP\colon \LF \to \Stres_{\varepsilon'}$ be the projection defined by
\begin{equation*}
\cP(x,y,z)=(x,y)
\end{equation*}
and let $\Lp = \pr{\cP}(\Lfg)$. Define $\rho_r \colon \C \to \C$ by $\rho_r(z)=z^r$, let $\mpp \colon \LF \setminus \Lp \to \Sp{1}$ be the map given by $\mpp=\frac{z}{\mid z \mid}$ and let $h$ be the monodromy of the Milnor fibration $\mpfg$. Then
\begin{enumerate}[1)]
\item  the following diagram commutes:\label{1}
\begin{equation}\label{dgcm}
\xymatrix{
\LF\setminus \Lp \ar[d]_{\mpp} \ar[r]^{\cP} & \Stres \setminus \Lfg \ar[d]^{\mpfg} \\
\Sp{1} \ar[r]^{-\rho_r} & \Sp{1} \\
}
\end{equation}
\item $\cP $ is a cyclic branched $r$-covering with ramification locus $\Lfg$ and the restriction\label{2}
\begin{equation*}
\cP \colon \Lp \to \Lfg
\end{equation*}
is a homeomorphism,
\item the projection $\mpp$ is an open book fibration with binding $\Lp$,\label{3}
\item the fibres of \ $\mpp$ and $\mpfg$ are diffeomorphic and the monodromy $\hp$ of $\mpp$ is equal to $\mtp{h}{r}$ up to conjugacy in the mapping class group of the fibre.\label{4}
\end{enumerate}
\end{prop}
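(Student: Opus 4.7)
The plan is to verify the four items in order, exploiting throughout the defining equation $\fbg(x,y) = -z^r$ on $\LF$.

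For (1), commutativity is a direct computation: for $(x,y,z) \in \LF \setminus \Lp$,
\[
\mpfg(\cP(x,y,z)) = \frac{\fbg(x,y)}{|\fbg(x,y)|} = \frac{-z^r}{|z|^r} = -\left(\frac{z}{|z|}\right)^r = -\rho_r(\mpp(x,y,z)).
\]

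For (2), the equation $z^r = -\fbg(x,y)$ has exactly $r$ distinct roots when $\fbg(x,y) \neq 0$ and the unique root $z=0$ otherwise. The choice of $\varepsilon'$ guarantees $|z| = |\fbg(x,y)|^{1/r} < \varepsilon$ for each such root, so the image of $\cP$ lies in $\Stres_{\varepsilon'}$ and $\cP$ is surjective onto it; a preliminary step rules out the other stratum of $\partial\BD{6}$, since a point of $\LF \cap \partial\BD{6}$ with $|z| = \varepsilon$ would force $|\fbg(x,y)|^{1/r} = \varepsilon$, contradicting the strict inequality in the hypothesis on $\varepsilon'$. Being $r$-to-$1$ over $\Stres_{\varepsilon'} \setminus \Lfg$ and $1$-to-$1$ over $\Lfg$, with local model $w \mapsto w^r$ transverse to $\Lfg$ at each branch point, makes $\cP$ a cyclic branched $r$-covering with ramification locus $\Lfg$, and the restriction $\cP|_{\Lp}$ a homeomorphism onto $\Lfg$.

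For (3) and (4), use (1) and (2) together. The commutative diagram identifies $\pr{\mpp}(\theta)$ with $\pr{\mpfg}(-\rho_r(\theta))$ via $\cP$, so the fibres of $\mpp$ are diffeomorphic to those of $\mpfg$; since $\cP$ is a local diffeomorphism away from $\Lp$ and $\mpfg$ is a locally trivial fibration with binding $\Lfg$ by Pichon--Seade, the fibration structure pulls back sheet-by-sheet to show that $\mpp$ is a locally trivial fibration. The standard local model $w \mapsto w^r$ of $\cP$ transverse to the ramification pulls back the tubular neighbourhood trivialisation of $\Lfg$ in $\Stres_{\varepsilon'}$ to a tubular neighbourhood trivialisation of $\Lp$ in $\LF$, with $z/|z|$ playing the role of the standard angular coordinate; this yields the open book structure of $\mpp$ with binding $\Lp$. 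Finally, since a single loop in $\Sp{1}$ traced by $\theta$ corresponds via $-\rho_r$ to $r$ loops in the target $\Sp{1}$ of $\mpfg$, transporting the fibre identification provided by $\cP$ around these loops gives $\hp = \mtp{h}{r}$ up to conjugacy in the mapping class group of the fibre.

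The most delicate step will be confirming the open book structure of $\mpp$ at the binding $\Lp$, i.e., checking that $z/|z|$ is genuinely the standard transverse angle on a tubular neighbourhood of $\Lp$. This reduces cleanly, however, to the known open book structure of $\mpfg$ near $\Lfg$, because $z$ is exactly a branch of $(-\fbg)^{1/r}$ along each sheet of $\cP$, so the $r$-fold branched cover pulls the open book of $\Stres_{\varepsilon'}$ back to the desired open book of $\LF$ without additional work.
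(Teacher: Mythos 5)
Your proposal is correct and follows essentially the same route as the paper: the same direct computation for the commuting square, the identification of $\cP$ as a cyclic branched $r$-covering ramified along $\Lfg$, the lifting of the tubular-neighbourhood open book structure of $\mpfg$ through the local model $w \mapsto w^r$ to get the open book with binding $\Lp$, and the loop-counting via $-\rho_r$ to identify $\hp$ with $\mtp{h}{r}$. The only cosmetic difference is that where you count roots of $z^r = -\fbg(x,y)$ directly, the paper packages the same facts by exhibiting diagram \eqref{dgcm} as a pull-back (with explicit inverse maps), which it then reuses to see that $\cP$ restricts to a diffeomorphism on each fibre.
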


\begin{proof}
Let us prove \ref{1}). Let $(x,y,z) \in \LF \setminus \Lp$. Then $ 0 \neq z^r=-f(x,y)\overline{g(x,y)}$
and 
\begin{equation*}
\mpfg\left(\cP(x,y,z)\right)=\mpfg(x,y)=\frac{f(x,y)\overline{g(x,y)}}{|f(x,y)\overline{g(x,y)}|} \ .
\end{equation*}

On the other hand, $\ensuremath{\mpp(x,y,z)=\frac{z}{|z|}}$ and
\begin{equation*}
-\rho_r\left(\frac{z}{|z|}\right)= - \frac{z^r}{|z|^r}= \frac{f(x,y)\overline{g(x,y)}}{|f(x,y)\overline{g(x,y)}|} \ . 
\end{equation*}
Hence $\mpfg(\cP(x,y,z))=-\rho_r(\mpp(x,y,z))$.

Now, in order to prove \ref{2}), first we prove that the diagram \eqref{dgcm} is a pull-back diagram.

Let $Q$ be the pull-back of $\mpfg$ by $-\rho_r$ defined by
\begin{align*}
Q &= \{(x,y, \lambda) \in (\Stres \setminus \Lfg) \times  \Sp{1} \mid \mpfg(x,y) = -\rho_r(\lambda)\} \\
  &= \{(x,y, \lambda) \in (\Stres \setminus \Lfg) \times  \Sp{1} \mid \frac{f(x,y)\overline{g(x,y)}}{|f(x,y)\overline{g(x,y)}|} = -\lambda^r \}
\end{align*}

Then, by the universal property of the pull-back, we have the following diagram:
\begin{equation*}
\xymatrix{
\LF \setminus \Lp \ar@/_/[dddr]_{\mpp} \ar[dr]^-{p} \ar@/^/[drrr]^{\cP} & & &\\
                & Q \ar[dd]^{\pi_3} \ar[rr]^-{\pi_{1,2}}           & & \Stres \setminus \Lfg \ar[dd]^{\mpfg} \\
                &                                            & & \\
                & \Sp{1} \ar[rr]^{-\rho_r}                   & & \Sp{1} \\
}
\end{equation*}
where $\pi_3$ is the projection on the third coordinate, $\pi_{1,2}$ is the projection on the first two coordinates and $p \colon (\LF \setminus \Lp) \to Q$ is defined by
\begin{equation*}
p(x,y,z) = \left(x,y, \frac{z}{|z|}\right) \ .
\end{equation*}
Let us define $q \colon Q \to (\LF \setminus \Lp)$ by $q(x,y, \lambda)=(x,y, \lambda |f(x,y) \overline{g(x,y)}|^{1/r})$. Then $q$ is the inverse map of $p$ and $\LF \setminus \Lp$ is diffeomorphic to $Q$.

Notice that diagram \eqref{dgcm} can also be seen as $\cP$ being the pull-back of the cyclic covering $-\rho_r$ by $\mpfg$, then $\cP$ is itself a cyclic covering of $r$ leaves. 

Now, let $(x,y) \in \Lfg$, then $\pr{\cP}(x,y)=\{(x,y,0) \in \LF \}$, \ie $\pr{\cP}(x,y)$ consists of only one point. Then $\cP$ from $\LF$ to $\Stres$ is a branched cyclic $r$-covering with ramification locus $\Lfg$. 

Statement \ref{3}) can be proved in the following way. Let $K$ be a connected component of $\Lfg$. Since $\mpfg$ is an open book fibration of $\Stres$, there exists a small closed tubular neighbourhood $\mc{U}$ of $K$ and a trivialisation $\delta \colon \mc{U} \to (\Sp{1} \times \D^2)$ such that $\delta(K)= \Sp{1} \times \{0\}$ and the following diagram commutes:
\begin{equation}
\xymatrix{
\mc{U} \setminus K \ar[dr]_{\mpfg} \ar[rr]^{\delta} &  & \Sp{1} \times \bigl(\D^2 \setminus \{0\}\bigr)   \ar[dl]^g \\
& \Sp{1} & \\
}
\end{equation}
where $\ensuremath{g(\lambda, w)= \frac{w}{|w|}}$ with $\lambda \in \Sp{1}$ and $w \in \D^2$.

Let $K'=\pr{\cP}(K)$ and $\calm{V}=\pr{\cP}(\mc{U})$. Let $\rho'_r \colon (\Sp{1} \times \D^2) \to (\Sp{1} \times \D^2)$ be the map defined by 
\begin{equation*}
\rho'_r(\lambda, w)=(-\lambda, w^r) \ ,
\end{equation*}
with $\lambda \in \Sp{1}$ and $w \in \D^2$.

The composition $\delta \circ \cP$ gives a cyclic branched $r$-covering of $\Sp{1} \times \D^2$ and in the other hand $-\rho'_r$ gives also a cyclic branched $r$-covering of $\Sp{1} \times \D^2$, both with the same ramification locus; then there exists an unique diffeomorphism $\delta' \colon \calm{V} \to \Sp{1} \times \D^2$ such that $\delta \circ \cP = -\rho'_r \circ \delta'$.

Then the following diagram commutes:
\begin{equation}\label{varr}
\xymatrix{
\calm{V} \setminus K' \ar[dr]_{\mpp} \ar[rr]^{\delta'} & & \Sp{1} \times (\D^2 \setminus \{0\}) \ar[dl]^{g} \\
& \Sp{1} & \\
}
\end{equation}
and $\mpp$ is an open book fibration of $\LF$. 

Now let us prove \ref{4}). 

Let $\F_{\fbg}$ be one Milnor fibre of $\mpfg$ and let $\Fp \subset \pr{\cP}(\F_{\fbg})$ be one fibre of $\mpp$. Since the diagram \eqref{dgcm} is a pull-back diagram, the restriction
\begin{equation*}
\cP|_{\Fp} \colon \Fp \to \F_{\fbg}
\end{equation*}
is a diffeomorphism. Moreover, the preimage $\pr{\cP}(\F_{\fbg})$ is the disjoint union of $r$ fibres of $\mpp$.

Let 
\begin{equation*}
\gamma \colon (\Stres \setminus \Lfg) \times \R \to (\Stres \setminus \Lfg)
\end{equation*}
be the flow of a vector field which is a lifting by $\mpfg$ of the canonical tangent vector field on $\Sp{1}$. Then $\gamma$ is transverse to the fibres of $\mpfg$ and a representative $h$ of the monodromy of $\mpfg$ is defined as the diffeomorphism of first return of $\gamma$ over the fibre $\F_{\fbg}$. Let
\begin{equation*}
\gp \colon (\LF \setminus \Lp) \times \R \to (\LF \setminus \Lp)
\end{equation*}
be a flow such that for all $\left((x,y,z),t \right) \in (\LF \setminus \Lp) \times \R$,
\begin{equation*}
\cP\left(\gp\left((x,y,z),t \right)\right) = \gamma\left( \cP(x,y,z),t \right) \ ,
\end{equation*}
and let $\hp$ be the representative of the monodromy of $\mpp$ defined as the diffeomorphism of first return of the flow $\gp$ on $\Fp$. By construction and since the covering is $r$-cyclic, one has the following commutative diagram
\begin{equation*}
\xymatrix{
\Fp \ar[d]_{h'} \ar[r]^{\cP} & \F_{\fbg} \ar[d]^{\mtp{h}{r}} \\
\Fp \ar[r]^{\cP} & \F_{\fbg} \\
}
\end{equation*}
\end{proof}

\section{The link $L_F$ as a plumbing link}
In this section we describe $L_F$ as a plumbing link and moreover, together with the open book structure given in the last section, we obtain that $L_F$ is a fibred plumbing link.

Let $f,g$ holomorphic functions from $\C^2$ to $\C$ such that the real analytic germ $\fbg \colon (\C^2,0) \to (\C,0)$ has an isolated singularity at the origin. Let $\mc{U}$ be a neighbourhood of the origin in $\C^2$, let $\pi \colon \mc{W} \to \mc{U}$ be a resolution of the holomorphic germ $fg$ and let $\Gamma$ be its dual graph. Then one has
\begin{equation*}
M_{\Gamma} \leftexp{t}{(m_1^f,\cdots,m_s^f)} + \leftexp{t}{b(L_f)}=0 \quad \text{and} \quad
M_{\Gamma} \leftexp{t}{(m_1^g,\cdots,m_s^g)} + \leftexp{t}{b(L_g)}=0 \ .
\end{equation*}

Now let us denote the link $L = L_f \cup - L_g$ by $L_f - L_g$. Then
\begin{equation*}
\pr{(M_{\Gamma})} \leftexp{t}{b( L_f - L_g)} = \pr{(M_{\Gamma})} \leftexp{t}{(b(L_f)-b(L_g))} \ .
\end{equation*}
Therefore
\begin{equation*}
\pr{(M_{\Gamma})} \leftexp{t}{b( L_f - L_g)} = - \leftexp{t}{(m^f_1-m^g_1,\ldots,m^f_s-m^g_s)} \ .
\end{equation*}
Hence $(m_1^f-m_1^g, \ldots, m_s^f - m_s^g)$ is the solution of the monodromical system of $L_f - L_g$. 

Let $\F_{\fbg}$ be the Milnor fibre of $\fbg$. Applying Theorem \ref{fibration} one obtains: 

\begin{thm}[{\cite[Cor.~2.2]{MR2115674}}] \label{fib2} 
The link $L_{f\bar{g}} = L_f - L_g$ is fibred if and only if for each node $v_j$ of \ $\Gamma$ one has $m_j^f \neq m_j^g$. Moreover, if this condition holds, then the quasi-periodic representative of the monodromy $h$ of $L_f-L_g$ has order $|m_j^f  - m_j^g |$ on $V_j \cap \F_{\fbg}$, where $V_j$ is the intersection of $\Sp{3}$ and the $\D^2$-bundle corresponding to $v_j$. 
\end{thm}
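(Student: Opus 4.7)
The plan is to derive this statement as a direct specialisation of Theorem~\ref{fibration} (the plumbing-link fibration criterion) together with Proposition~\ref{order}. The material displayed immediately before the theorem has already done most of the algebra, so the task reduces to assembling the pieces and checking that the ``node'' hypothesis in Theorem~\ref{fibration} matches the hypothesis of the present theorem.

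First I would confirm that $\Lfg = L_f - L_g$ is a plumbing link in $\Sp{3}$ with underlying plumbing tree $\Gamma$. Since $\pi\colon \mc{W}\to\mc{U}$ is an embedded resolution of $fg$, each component of $L_f\cup L_g$ is isotopic to an $\Sp{1}$-fibre of one of the plumbed $\D^2$-bundles $V_j$ associated to the vertices $v_j$ of $\Gamma$. Reversing the orientation of the components coming from $L_g$ only changes the arrow multiplicities, not the underlying plumbing manifold, so $L_f - L_g$ is a plumbing link on $\Gamma$ with arrow-multiplicity vector $b(L_f - L_g) = b(L_f) - b(L_g)$.

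Next, I would combine the two displayed monodromical identities for $L_f$ and $L_g$ separately: subtracting one from the other yields
\begin{equation*}
M_\Gamma\,{}^t(m_1^f - m_1^g,\ldots,m_s^f - m_s^g) + {}^t b(L_f - L_g) = 0,
\end{equation*}
so $(m_j^f - m_j^g)_{j=1}^s$ solves the monodromical system of $L_f - L_g$; and since $\pi$ resolves a holomorphic germ, $M_\Gamma$ is negative definite, hence invertible, and the solution is unique. Applying Theorem~\ref{fibration}, condition~(i) is automatic, and the link is fibred if and only if condition~(ii) holds, i.e., if and only if $m_j^f - m_j^g \neq 0$ for every node $v_j$ of $\Gamma$. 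This is the asserted equivalence.

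Finally, the order assertion is immediate from Proposition~\ref{order}: the quasi-periodic representative $h$ of the monodromy has order $|m_j|$ on $V_j \cap \F_{\fbg}$, and here $m_j = m_j^f - m_j^g$. No step of this argument is genuinely difficult; the only point meriting care is that condition~(ii) of Theorem~\ref{fibration} constrains exactly the entries corresponding to nodes of $\Gamma$ (not to all vertices), which matches the hypothesis of the present theorem verbatim, so no separate treatment of string vertices is required.
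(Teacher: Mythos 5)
Your proposal matches the paper's own derivation: the paper performs exactly the same subtraction of the two monodromical systems to show that $(m_1^f-m_1^g,\ldots,m_s^f-m_s^g)$ solves the monodromical system of $L_f-L_g$, then invokes Theorem~\ref{fibration} for the fibredness criterion and Proposition~\ref{order} for the order statement. The only point to flag (which affects the paper equally) is that Theorem~\ref{fibration} as stated gives only the sufficiency of condition~(ii), so the ``only if'' direction really rests on the full Eisenbud--Neumann/Chaves criterion cited there rather than on the statement as printed.
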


Hence, let $\Gamma_{\fbg}$ be the plumbing tree $\Gamma$ but with each vertex $i$ weighted by $m_i = m^f_i - m^g_i$. We have that the order of the monodromy $h$ of the Milnor fibration of $\fbg$ on $V_i \cap \F_{\fbg}$ equals $m_i = |m_i^f - m_i^g|$. Moreover, when $m_i^f - m_i^g <0$, the natural orientation of the $\Sp{1}$-fibres of $V_i$ is opposite to the one obtained by lifting the orientation of the circle $\Sp{1}$ with the Milnor fibration. In order to get the right orientation, one must change the orientation of the fibres on each $V_i$ where $m_i^f - m_i^g <0$. It implies that each edge joining two vertices $v_i$ and $v_j$ such that $m^f_i - m^g_i <0$ and $m^f_j - m^g_j \geq 0 $ is now weighted by $\varepsilon = -1$.

On the other hand, since $L_F$ is a graph manifold and $L'$ as the preimage of $\Lfg$ is union of $\Sp{1}$-fibres, we obtain that the pair $(L_F, L')$ is a plumbing link, but moreover, by Proposition~\ref{diagc}, $(L_F, L')$ is a fibred plumbing link.

\subsection{The Nielsen graph of the monodromy $h$}
As the monodromy $h$ of the Milnor fibration $\mpfg$ is a quasi-periodic diffeomorphism, we can describe it trough its Nielsen graph $\Ng{h}$.

Given a minimal reduction system $\cc$ for $h$, by Section~\ref{ngmh} we have that the Nielsen graph is given by Figure~\ref{fig:ngcph}, where $\Ng{h_{i}}$ is the Nielsen graph of the periodic diffeomorphism $h_i$.

\begin{figure}[H]
\begin{center}
\begin{tikzpicture}[xscale=1.2,yscale=1]
\filldraw[black] (0cm,0cm) circle (2.8pt);
\draw (-1cm,.7cm) -- (0cm,0cm) (5cm,0cm) -- (6cm,.7cm) (-1cm,-.7cm) -- (0cm,0cm) (6cm,-.7cm) -- (5cm,0cm);
\draw[line width=.69pt] (0cm,0cm) -- (5cm,0cm); 
\draw[thick,dotted] (-1cm,.7cm) -- (-1.5cm,1.05cm) (-1cm,-.7cm) -- (-1.5cm,-1.05cm);
\filldraw[black] (5cm,0cm) circle (2.8pt);
\draw[thick,dotted] (6cm,.7cm) -- (6.5cm,1.05cm) (6cm,-.7cm) -- (6.5cm,-1.05cm);
\draw (.8cm,.3cm) node {$(\lambda_i,\sigma_i)$}
(0cm,-.3cm) node {$q_{i}$}
(-.9cm,0cm) node {$[m_{i},g_{i}]$}
(4.2cm,.3cm) node {$(\lambda_j,\sigma_j)$}
(5cm,-.3cm) node {$q_{j}$}
(5.9cm,0cm) node {$[m_{j},g_{j}]$}
(2.5cm,-.3cm) node {$t$};
\draw[thick,loosely dotted] (-1.7cm,.3cm) arc (170:190:2cm) (6.7cm,-.3cm) arc (-10:10:2cm);
\end{tikzpicture}
\end{center}
\caption{Joining the Nielsen graphs $\Ng{h_{i}}$ and $\Ng{h_{j}}$.}
\label{fig:ngcph}
\end{figure}
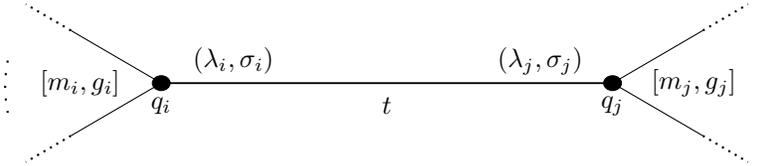

\subsection{The Nielsen graph of the diffeomorphism $\mtp{h}{r}$.}
Let $h \colon \F_{\fbg} \to \F_{\fbg}$ be the monodromy of the Milnor fibration $\mpfg$ with order $m$ and let $r \geq 2$. By Proposition~\ref{diagc}, the diffeomorphism $\mtp{h}{r}$ is a representative of the monodromy of the open book fibration $\mpp$, and by Section~\ref{ngmhr} we can describe its Nielsen invariants in terms of the Nielsen invariants of $h$.

Hence, by Lemmas~\ref{nihr} and \ref{twhr}, the Nielsen invariants of $\mtp{h}{r}$ are given by:
\begin{itemize}
\item The order of $\mtp{h}{r}$ is $\ensuremath{\otp{r}}= \frac{m}{\gcd(m,r)}$.
\item The genus of $\ostp{r}$ is 
\begin{equation*}
\gtp{r} =(g-1) \gcd(m,r)+1+\frac{1}{2} \sum_{i=1}^{s'} (\gcd(m,r)-\gcd(m/\lambda_i,r)) \ .
\end{equation*}
\item[-] The orbit space $\ostp{r}$ has a maximum of $\ensuremath{\sum_{i=1}^{s} \gcd(m/\lambda_i,r)}$ exceptional orbits and $\ensuremath{\sum_{i=s+1}^{s'} \gcd(m/\lambda_i,r)}$ boundary curves. In any case, the valency $(\ltp{r}_i, \stp{r}_i)$ is given by:
\begin{equation*}
\ltp{r}_i = \frac{m}{\lambda_i \gcd(m/\lambda_i,r)} \quad \text{and} \quad \stp{r} \times \frac{r}{\gcd(m,r)} \equiv \sigma_i \pmod{\ltp{r}} \ .
\end{equation*}
\item the twist $\ttp{r}=rt$.
\end{itemize}

Now, let $T(\mtp{h}{r})$ be the mapping torus of the diffeomorphism $\mtp{h}{r}$ and let $\F'$ be a fibre of $\mpp$. Hence $T(\mtp{h}{r})$ has boundary $\partial \F' \times \Sp{1}$. Let $\lambda \in \Sp{1}$, then $\partial \F' \times \{\lambda\}$ is the boundary of a manifold $\F'_{\lambda} \subset T(\mtp{h}{r})$ diffeomorphic to $\F'$.
Let
\begin{equation*}
\Op{\mtp{h}{r}}= T(\mtp{h}{r}) \bigcup_{\partial \F' \times \Sp{1}} (\partial \F' \times \D^{2}) \ ,
\end{equation*}
let $L(\mtp{h}{r})= (\partial \F' \times \{0\}) \subset (\partial \F' \times \D^{2})$ and let
\begin{equation*}
\pi_{\mtp{h}{r}} \colon (\Op{\mtp{h}{r}} \setminus L(\mtp{h}{r})) \to \Sp{1}
\end{equation*}
be such that $\pi_{\mtp{h}{r}} \left(\left(\partial \F' \times (0,\lambda]\right) \bigcup_{\{\partial \F' \times \lambda\}} \F'_{\lambda}\right)= \lambda$.

Notice that the pair $(\Op{\mtp{h}{r}}), L(\mtp{h}{r}))$ is diffeomorphic to $(L_F, L')$ and $\pi_{h^r}$  is the fibration $\mpp$. Thus, applying Theorem~\ref{mero}, the graph $\W(\Op{\mtp{h}{r}},L(\mtp{h}{r}))$ can be computed using the Nielsen graph $\Ng{\mtp{h}{r}}$. Moreover, we can compute the plumbing tree corresponding to $\LF$ from the graph $\W(\Op{\mtp{h}{r}},L(\mtp{h}{r}))$, giving in this way a description of $\LF$ as a graph manifold in terms of the link $\Lfg$ and the monodromy $h$ of the Milnor fibration $\mpfg$.

\section{Join Theorem for $d$-regular functions}
In this section we show that if $f$ and $g$ are holomorphic germs from $\C^2$ to $\C$ such that the real analytic germ $\fbg$ has isolated critical point and $r\geq 2$ is an integer, then the function $F  =\fbg + z^ r$ has a Milnor fibration with projection map $F/|F|$. Then we use a result given in \cite{MR0488073} to study the topology of this fibration.

\subsection{Cyclic suspensions}\label{cs}
In order to achieve our goal, we must define the cyclic suspension of a knot. For that we need some results of \cite{MR656605}, \cite{MR0488073} and \cite{MR0358797}.

\begin{defin}
A \textbf{knot} $\mc{K}= \kn{k}{K}$ is an oriented $k$-sphere with an oriented $2$-codimensional submanifold $K \subset \Sp{k}$. Notice that we use knot to denote also a link in the sense of Section~\ref{prel}.
\end{defin}

\begin{defin}[{\cite[Definition~1.1]{MR0488073}}]
A knot $\mc{L}= \kn{n}{L}$ is \defi{fibred} if $L$ is the binding of an open book fibration of $\Sp{n}$.
\end{defin}

\begin{defin}[{\cite[Definition~1.1]{MR0488073}}]
Let $M$ be a closed compact manifold. An \textbf{open book structure} for $M$ is a map $b \colon M \to \D^{2}$ such that zero is a regular value and
\begin{equation*}
\phi_b= \frac{b}{|b|} \colon M \setminus \pr{b}(0) \to \Sp{1} 
\end{equation*}
is a locally trivial fibration.
\end{defin}

Thus, given a fibred knot $\mc{L}= \kn{n}{L}$, we have associated an open book structure $b \colon \Sp{n} \to \D^{2}$ (see \cite[\S~1]{MR0488073}).

Notice that a fibred knot $\mc{L}=\kn{3}{L}$ is a fibred plumbing link, when $\Sp{3}$ is considered as a plumbing manifold and $L$ is a union of Seifert fibres, 

Let $\kn{m}{K}$ be a fibred knot with $m \geq 3$ and let $b \colon \Sp{m} \to \D^{2}$ be the associated open book structure. Let $\rho_r \colon \D^2 \to \D^2$ be the $r$-branched cyclic covering of $\D^2$ given by $\rho_r(z)=z^r$ with $0 \in \D^2$ as ramification locus.

The following result is a consequence of {\cite[Theorem~2.2]{MR0488073}}.

\begin{lem}\label{brcvkn}
The pull-back $\pi_r \colon \br\kn{m}{K} \to \Sp{m}$ of $\rho_r \colon \D^2 \to \D^2$ by the map $b$:
\begin{equation*}
\xymatrix{
\br\kn{m}{K} \ar[d]_{\pi_r} \ar[r] & \D^2 \ar[d]^{\rho_r} \\
\Sp{m} \ar[r]^{b} & \D^2
}
\end{equation*}
is an $r$-branched cyclic covering of \ $\Sp{m}$, branched along $K$.
\end{lem}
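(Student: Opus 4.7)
The plan is to verify directly that the pullback $\br\kn{m}{K}$ admits the structure of an $r$-fold cyclic branched covering of $\Sp{m}$ with ramification along $K$, by analysing the complement of $K$ and a tubular neighbourhood of the binding separately. Concretely, one realises the pullback as
\[
\br\kn{m}{K} = \{(x,z) \in \Sp{m} \times \D^2 \mid b(x) = z^r\},
\]
with $\pi_r$ the projection onto the first factor, and equips it with the $\Z{r}$-action $\zeta \cdot (x,z) = (x, \zeta z)$; by construction the orbits of this action coincide with the fibres of $\pi_r$.

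Over $\Sp{m} \setminus K = \Sp{m} \setminus \pr{b}(0)$, the map $\rho_r$ restricts to a smooth unramified cyclic covering of degree $r$ with deck group $\Z{r}$, and pulling a covering back along any map yields a covering of the same degree. Hence the restriction $\pi_r \colon \br\kn{m}{K} \setminus \pr{\pi_r}(K) \to \Sp{m} \setminus K$ is already an $r$-fold cyclic covering with the $\Z{r}$-action described above. On the other hand, over $K$ the condition $z^r = b(x) = 0$ forces $z = 0$, so $\pr{\pi_r}(K) = K \times \{0\}$ and $\pi_r$ restricts to a homeomorphism from $\pr{\pi_r}(K)$ onto $K$.

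To finish I would install the correct local branched-covering model near the binding using the fact that an open book structure provides a tubular neighbourhood $\mc{U} \cong K \times \D^2$ of $K$ in $\Sp{m}$ on which $b$ is modelled by the projection $(k,w) \mapsto w$; this is a standard feature of open books and follows from combining the regularity of $0$ with the fibration property of $b/|b|$. In such a neighbourhood, the pullback is identified with $K \times \D^2$ via the map $(k, z) \mapsto ((k, z^r), z)$, and $\pi_r$ becomes $(k, z) \mapsto (k, z^r)$, which is exactly the standard local model of the $r$-fold cyclic cover of $K \times \D^2$ branched along $K \times \{0\}$. This identification simultaneously shows that $\br\kn{m}{K}$ is a topological manifold near the ramification locus and that the global $\Z{r}$-action extends with fixed-point set precisely $\pr{\pi_r}(K)$.

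The main obstacle, such as it is, is justifying the local product model near $K$; once it is in hand the lemma follows by gluing the free cyclic covering on the complement of $K$ to the explicit branched model in the tubular neighbourhood, and recognising the global $\Z{r}$-action $\zeta \cdot (x,z) = (x, \zeta z)$ as the deck group of the resulting branched cover. The dimensional hypothesis $m \geq 3$ is not needed in the proof itself and is presumably in place only for the applications to come.
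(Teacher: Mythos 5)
Your argument is correct. For comparison: the paper does not actually prove this lemma at all; it simply records it as a consequence of Theorem~2.2 of Kauffman--Neumann \cite{MR0488073}, so your direct verification is essentially a self-contained reconstruction of the standard argument behind that citation rather than a departure from the paper. The one point that genuinely requires care, and which you correctly identify and handle, is the behaviour at the branch locus: since $d\rho_r$ vanishes at $0$ for $r \geq 2$, the maps $b$ and $\rho_r$ are not transverse over $K = \pr{b}(0)$, so the fibre product is not automatically a manifold there; your explicit identification of the pullback over a tubular neighbourhood $K \times \D^2$ (on which $b$ is the projection, using that $0$ is a regular value of $b$) with the model $(k,z) \mapsto (k,z^r)$ is exactly what resolves this, and at the same time exhibits the $\Z{r}$-action $\zeta \cdot (x,z)=(x,\zeta z)$ as the deck group with fixed set $\pr{\pi_r}(K)$. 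Your closing remark is also accurate: the hypothesis $m \geq 3$ plays no role here and is only needed later, for the corollary invoking the $2$-connectedness hypotheses of Lemma~\ref{embij}.
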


\begin{lem}[{\cite[Lemma~2.4]{MR0488073}}]\label{embij}
Let $V_1 \subset V_2 \subset M$ be proper embeddings of smooth manifolds of dimension $m-2$, $m$ and $m+2$ respectively. Assume that $M$ and $V_2$ are $2$-connected. Let $i \colon V_2 \to M$ be the standard inclusion. Then there exists an embedding of pairs $j \colon (V_2,V_1) \to (M, V_2)$ such that
\begin{enumerate}[a)]
\item the image $j(V_2) \subset M$ is transverse to $V_2$ with $j(V_2) \cap i(V_2)=V_1$,\label{adej}
\item the map $j$ is isotopic to $i$ through maps satisfying condition \ref{adej}) except at the end of the isotopy,\label{bdej}
\item the map $j$ is unique up to isotopy through maps which satisfy conditions \ref{adej}) and \ref{bdej}).
\end{enumerate}
\end{lem}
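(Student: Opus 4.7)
The plan is to construct $j$ as the graph of a section of the normal bundle of $V_2$ in $M$ whose zero locus is precisely $V_1$. By the tubular neighbourhood theorem, there is an open neighbourhood $\mc{N}$ of $V_2$ in $M$ and a diffeomorphism $\mc{N} \cong \nu$ sending $i(V_2)$ to the zero section, where $\nu$ denotes the normal bundle of $V_2$ in $M$. This $\nu$ is an oriented rank-$2$ real vector bundle; since $V_2$ is $2$-connected its Euler class in $\Coh{2}{V_2}$ vanishes, so $\nu$ is trivial, and I would fix an identification $\nu \cong V_2 \times \R^2$.

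Under this trivialisation the problem reduces to producing a smooth map $s \colon V_2 \to \R^2$ with $s^{-1}(0) = V_1$ transversely: for then $j(x) := (x, s(x)) \in V_2 \times \R^2 \cong \mc{N}$ is an embedding with $j(V_2) \cap i(V_2) = V_1$, and transversality of $j(V_2)$ to $i(V_2)$ along $V_1$ follows from transversality of $s$ to $0 \in \R^2$. I would build $s$ by choosing a tubular neighbourhood $U$ of $V_1$ in $V_2$ (the normal bundle of $V_1$ in $V_2$ being trivialisable in the applications envisaged, for instance when $V_1$ is the binding of an open book structure on $V_2$), defining $s|_U$ as the projection to $\R^2$ associated with the trivialisation, and extending by a bump function supported in $U$, with a small Sard-type perturbation outside to keep $0$ a regular value on all of $V_2$. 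For condition (b), the linear family $s_t = ts$ with $t \in [0,1]$ gives an isotopy from $i$ (at $t=0$) to $j$ (at $t=1$), and since $ts$ vanishes transversely along $V_1$ for every $t > 0$, condition (a) persists throughout the isotopy except at its initial point.

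For the uniqueness statement (c), given two admissible embeddings corresponding to sections $s_0$ and $s_1$ both of which vanish transversely along $V_1$, I would construct an isotopy through admissible sections by first using the vanishing of obstructions in $\Coh{k}{V_2}$ for $k \leq 2$ (guaranteed by the $2$-connectedness of $V_2$) to homotope the trivialisations of $\nu|_{V_1}$ induced by $ds_0|_{V_1}$ and $ds_1|_{V_1}$, and then promoting this local homotopy near $V_1$ to a global isotopy of sections by damping it outside a tube around $V_1$. The main obstacle is precisely this uniqueness step: one must maintain $V_1$ as the exact zero locus and preserve transversality throughout the isotopy, which requires careful control of the interpolation; by comparison, the existence of $s$ and the linear isotopy in (b) are routine consequences of the tubular neighbourhood theorem and transversality once $\nu$ has been trivialised.
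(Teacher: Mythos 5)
The paper does not actually prove this lemma; it imports it verbatim as \cite[Lemma~2.4]{MR0488073}, so there is no in-paper proof to compare against. Judged on its own terms, your reduction is the right skeleton (trivialise the normal bundle $\nu$ of $V_2$ in $M$ using $\Coh{2}{V_2}=0$, realise $j$ as the graph of a section $s\colon V_2\to\R^2$ with $s^{-1}(0)=V_1$ transversely, and take $s_t=ts$ for part~b)), but the construction of $s$ — which is where the lemma actually has content — does not work as written. If you define $s$ on a tube $U$ around $V_1$ via a normal trivialisation and then "extend by a bump function supported in $U$", the resulting map is identically zero on $V_2\setminus U$, and a generic Sard-type perturbation of the zero map there produces a nonempty codimension-$2$ zero set whenever $m\geq 2$; so $s^{-1}(0)$ strictly contains $V_1$ and condition a) fails. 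What is actually required is a \emph{nowhere-vanishing} extension of $s|_{\partial U}$ over $V_2\setminus\mathring{U}$, i.e.\ an extension of the direction map $s/|s|\colon\partial U\to\Sp{1}$; the obstruction to this lives in $H^2(V_2\setminus\mathring{U},\partial U;\Z)$, and showing it vanishes (this is where the connectivity hypotheses, and in Kauffman--Neumann's application the open book structure $b$ itself, enter) is the heart of the proof. You also assume the normal bundle of $V_1$ in $V_2$ is trivial only "in the applications envisaged", which proves a special case rather than the stated lemma.

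Two further points. First, your argument never uses the $2$-connectivity of $M$, which should make you suspicious: it is needed in part~c), since an arbitrary embedding $j$ satisfying a) and b) is not a priori the graph of a section over $V_2$ inside your chosen tubular neighbourhood, and isotoping it into that normal form (relative to conditions a) and b)) is exactly where the connectivity of the ambient manifold is consumed. Your uniqueness discussion only compares two embeddings that are already graphs of sections, so it addresses the easier half of c). Second, the orientability of $\nu$ (needed before invoking the Euler class) deserves the one-line remark that $w_1(\nu)\in H^1(V_2;\Z{2})=0$ because $V_2$ is simply connected. The linear isotopy for b) and the graph/transversality dictionary are fine.
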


\begin{cor}
Let $\kn{m}{K}$ be a fibred knot and let $\kn{m+2}{\Sp{m}}$ be the trivial knot. Then the following diagram commutes:
\begin{equation*}
\xymatrix{
\br\kn{m}{K} \ar[d]_{\pi_r} \ar[r]^-{\hat{j}} & \br\kn{m+2}{\Sp{m}} \ar[d]^{\pi'_r} \\
\Sp{m} \ar[r]^j & \Sp{m+2}
}
\end{equation*}
where $j$ is the embedding of Lemma~\ref{embij}.
\end{cor}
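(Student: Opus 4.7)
The plan is to lift the embedding $j$ from Lemma~\ref{embij} to the branched covers by first adjusting $j$ within its isotopy class so that the two open book structures match on $\Sp{m}$, and then reading off $\hat j$ from the pull-back description given by Lemma~\ref{brcvkn}.

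First, I would fix a standard open book structure $b' \colon \Sp{m+2} \to \D^{2}$ for the trivial knot $\kn{m+2}{\Sp m}$: viewing $\Sp{m+2}$ as the unit sphere in $\R^{m+1}\times\R^{2}$ and setting $b'(u,v)=v$, the binding is the equatorial $\Sp m = \Sp{m+2}\cap(\R^{m+1}\times\{0\})$ and the page is a disc $\D^{m+1}$. By Lemma~\ref{brcvkn}, one has the explicit pull-back models
\begin{equation*}
\br\kn{m}{K}=\{(x,z)\in\Sp m\times\D^{2}:b(x)=z^{r}\},\qquad
\br\kn{m+2}{\Sp m}=\{(y,w)\in\Sp{m+2}\times\D^{2}:b'(y)=w^{r}\},
\end{equation*}
with $\pi_{r}$ and $\pi'_{r}$ the projections onto the first factor.

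Next, I would construct $j\colon\Sp m\to\Sp{m+2}$ in the isotopy class produced by Lemma~\ref{embij} so that, in addition, $b'\circ j=b$. The construction is geometric: $\Sp m$ is the union of the mapping torus of the monodromy of $b$ acting on the fibre $\mathfrak F$ with a tubular neighbourhood $K\times\D^{2}$ of the binding, while $\Sp{m+2}$ decomposes analogously via $b'$ using the disc page $\D^{m+1}$ and trivial monodromy. An embedding $\mathfrak F\hookrightarrow\D^{m+1}$ sending $\partial\mathfrak F=K$ to a standardly embedded $K\subset\partial\D^{m+1}$ exists by general position (since $\dim\mathfrak F=m-1$ and $\dim\D^{m+1}=m+1$); composing with the monodromy of $b$ on one side and extending by the identity on the collar $K\times\D^{2}$ assembles into the required $j$. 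By construction $j(\Sp m)$ meets the standard $\Sp m\subset\Sp{m+2}$ transversally in $K$, so by the uniqueness statement of Lemma~\ref{embij} this $j$ represents the isotopy class furnished by that lemma.

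With $b'\circ j=b$ in hand, I would simply set
\begin{equation*}
\hat j\colon\br\kn{m}{K}\longrightarrow\br\kn{m+2}{\Sp m},\qquad \hat j(x,z)=(j(x),z).
\end{equation*}
The relation $b(x)=z^{r}$ becomes $b'(j(x))=z^{r}$, so $\hat j(x,z)$ lies in the pull-back $\br\kn{m+2}{\Sp m}$; $\hat j$ is an embedding because $j$ is; and the commutativity $\pi'_{r}\circ\hat j=j\circ\pi_{r}$ is read off the defining formulas. One further notes that $\hat j$ carries the ramification locus $\pi_{r}^{-1}(K)$ onto $(\pi'_{r})^{-1}(j(\Sp m))\cap(\pi'_{r})^{-1}(\Sp m)$, which is consistent with the $r$-fold cyclic branched structure.

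The main obstacle I anticipate is the second step: arranging $b'\circ j=b$ on the nose, rather than only up to isotopy of open books on $\Sp m$. This forces a careful matching between the (generally non-trivial) monodromy data of $b$ and the trivial monodromy of $b'$, and it is only via the uniqueness clause of Lemma~\ref{embij} that the carefully chosen $j$ can be identified with the abstract embedding provided by that lemma.
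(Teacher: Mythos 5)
The paper itself gives no proof of this corollary --- it is quoted from Kauffman--Neumann (the construction on p.~377 of that paper) as a formal consequence of Lemmas~\ref{brcvkn} and~\ref{embij} --- so I am judging your argument on its own. Your first and third steps are sound: once an embedding $j$ in the isotopy class of Lemma~\ref{embij} satisfies $b'\circ j=b$, the assignment $\hat j(x,z)=(j(x),z)$ visibly lands in the pull-back model of $\br\kn{m+2}{\Sp{m}}$ and makes the square commute. The gap is concentrated in your second step, which you flag but do not close. Two concrete problems. First, the existence of the embedding $\mathfrak{F}\hookrightarrow\D^{m+1}$ is not a general-position statement: general position embeds an $(m-1)$-manifold only in dimension at least $2(m-1)$, and $m+1<2(m-1)$ as soon as $m>3$. (Existence does hold, but because $\mathfrak{F}$ is a page of $b$, hence already a codimension-one submanifold of $\Sp{m}=\partial\D^{m+1}$ that can be pushed into the interior rel its boundary $K$.) Second, and more seriously, assembling $j$ over the mapping-torus part of $\Sp{m}$ requires a path of embeddings $e_\theta\colon\mathfrak{F}\to\D^{m+1}$, fixed on $\partial\mathfrak{F}=K$, with $e_1=e_0\circ h$ --- that is, an isotopy rel boundary from $e_0$ to $e_0\circ h$, where $h$ is the generally nontrivial monodromy. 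This is a codimension-two isotopy problem and does not follow from anything you have written; it is essentially the content of the González-Acuña--Montesinos embedding theorem rather than a routine verification. (One can sidestep it by embedding the actual pages $\mathfrak{F}_\theta\subset\Sp{m}=\partial\D^{m+1}$ pushed inward, page by page, instead of trivialising the fibration and composing with $h$.) Finally, identifying your $j$ with the one furnished by Lemma~\ref{embij} requires checking its condition~(\ref{bdej}) (isotopy to the standard inclusion through maps transverse to $\Sp{m}$), which you assert only via transversality.

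There is also a cheaper route that avoids forcing $b'\circ j=b$ on the nose. Take $j$ exactly as Lemma~\ref{embij} provides it. Since $j(\Sp{m})$ meets $\Sp{m}$ transversally precisely in $K$, a meridian of $K$ inside $j(\Sp{m})$ bounds a small disc meeting $\Sp{m}$ transversally in one point, hence links $\Sp{m}$ once. Therefore the restriction to $j(\Sp{m})\setminus K$ of the $\Z{r}$-cover classified by $b'/|b'|$ is classified by the homomorphism $\Ho{1}{\Sp{m}\setminus K} \to \Z{r}$ sending every meridian of $K$ to $1$, which is the same homomorphism that classifies $\pi_r$ (by the open book property of $b$ and Alexander duality, $\Ho{1}{\Sp{m}\setminus K}$ is generated by these meridians). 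Hence $(\pi'_r)^{-1}(j(\Sp{m}))\to j(\Sp{m})$ is the $r$-fold cyclic cover of $\Sp{m}$ branched along $K$; identifying it with $\br\kn{m}{K}$ defines $\hat j$, and the diagram commutes by construction. I suggest either rewriting your argument along these lines or supplying an actual proof of the isotopy $e_0\simeq e_0\circ h$ rel boundary.
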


\begin{defin}[{\cite[page 377]{MR0488073}}]
Let $K \otimes \br$ be the image 
\begin{equation*}
K \otimes \br = \hat{j}(\br\kn{m}{K}) \subset \br\kn{m+2}{\Sp{m}} \cong \Sp{m+2} \ .
\end{equation*}
Then we obtain a new knot $\kn{m+2}{K \otimes \br}$. This knot is called the \defi{$r$-fold cyclic suspension} or briefly \defi{$r$-cyclic suspension} of the knot $\kn{m}{K}$.
\end{defin}

\subsection{The property of $d$-regularity}\label{pdr}
Now we present the concept of $d$-regularity, which is a necessary and sufficient condition for a real analytic function $f$ to have Milnor fibration given by $\Phi_f=\frac{f}{||f||}$. Roughly speaking, a real analytic function $f$ with isolated singularity is $d$-regular if and only if $f$ has Milnor fibration $\Phi_f = f / ||f||$. Thus the link of the singularity $L_f$ is a fibred knot.

Let $U$ be an open neighbourhood of $0 \in \R^n$ with $n > 1$. Let $k \leq n$ and let $f \colon (U,0) \to (\R^k,0)$ be an analytic map defined on $U$ with isolated critical point at $0$.

Let $V=\pr{f}(0)$ and let $\B{}_{\varepsilon}$ be a closed ball in $\R^n$, centred at $0$, of sufficiently small radius $\varepsilon$, so that every sphere in this ball, centred at $0$, meets transversely $V$, if $V$ is not an isolated point at the origin.

One can define a family of real analytic spaces as follows.

For each $\ell \in \rp{k-1}$, consider the line $\calm{L}_{\ell} \subset \R^k$ passing through the origin corresponding to $\ell$, let $f|_{\B{}_{\varepsilon}}$ be the restriction of $f$ to the ball $\B{}_{\varepsilon}$ and set $X_{\ell} = \pr{f|_{\B{}_{\varepsilon}}}(\calm{L}_{\ell})$. 

Let $\calm L_{\ell}^\perp$ be the hyperplane orthogonal to $\calm L_{\ell}$ and let $\pi_{\ell} \colon \R^k \to \calm{L}_{\ell}^\perp$ be the orthogonal projection. Set $h_{\ell} = \pi_{\ell} \circ f|_{\B{}_{\varepsilon}}$, then $X_\ell$ is the vanishing set of $h_\ell$, which is real analytic. Hence $\{X_{\ell} \}$ is a family of real analytic hypersurfaces parametrised by $\rp{k-1}$.

The set of critical points of $h_{\ell}$ is contained in the set of critical points of $f|_{\B{}_{\varepsilon}}$ (see \cite[Lemma~2.1]{MR2647448}). As $f$ has isolated critical point at the origin, $h_{\ell}$ has the origin as its only critical point.

\begin{defin}[{\cite[Definition~2.3]{MR2647448}}]\label{canpen}
The family $\{ X_\ell \mid \ell \in \rp{k-1} \}$ is called the \defi{canonical pencil} of $f$.
\end{defin}

\begin{defin}[{\cite[Definition~2.4]{MR2647448}}]\label{dreg}
The map $f$ is said to be \defi{$d$-regular} at $0$ if there exists a metric $\mu$ induced by some positive definite quadratic form and there exists $\varepsilon' > 0$ such that every sphere (for the metric $\mu$) of radius $\varepsilon \leq \varepsilon'$ centred at $0$ meets every $X_\ell \setminus \pr{f}(0)$ transversely whenever the intersection is not empty.
\end{defin}

The following result follows from \cite[Th.~5.3]{MR2647448}, \cite[Cor.~5.4]{MR2647448} and the proof of \cite[Th. 11.2]{milnor:singular}.

\begin{thm}[Fibration Theorem]\label{fibthdr} 
Let $f \colon (\R^n,0) \to (\R^k,0)$ be a real analytic germ with isolated critical point. Then $f$ is $d$-regular if and only if the map
\begin{equation*}
\phi_f = \frac{f}{\norm{f}} \colon \Sp{n-1}_{\varepsilon} \to \Sp{1}
\end{equation*}
is a locally trivial fibration.
\end{thm}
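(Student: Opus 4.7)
The statement is an if-and-only-if, so the plan has two directions, which I would unify through a single observation: $d$-regularity is equivalent to $\phi_f = f/\norm{f}$ being a submersion on $\Sp{n-1}_{\varepsilon} \setminus L_f$.

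To see the equivalence, note that for each $\ell \in \rp{k-1}$ with corresponding line $\calm{L}_\ell$ and for each unit vector $\theta \in \calm{L}_\ell$, the fibre $\pr{\phi_f}(\theta)$ is precisely one of the two half-components of $X_\ell \setminus \pr{f}(0)$. At a point $x \in \Sp{n-1}_\varepsilon \setminus L_f$ with $\phi_f(x) = \theta$, the rank of $d(\phi_f|_{\Sp{n-1}_\varepsilon})_x$ is maximal exactly when the tangent space to $\pr{\phi_f}(\theta)$ at $x$ is transverse to $T_x \Sp{n-1}_\varepsilon$, which is the condition that $X_\ell \setminus \pr{f}(0)$ meets $\Sp{n-1}_\varepsilon$ transversely at $x$. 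Ranging over $\ell$ and $\theta$, this yields the claimed equivalence.

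For the implication ``fibration $\Rightarrow$ $d$-regular'', a locally trivial fibration is a submersion on its domain, so the preceding observation directly gives the transversality required in Definition~\ref{dreg}. For the reverse implication, assuming $d$-regularity, $\phi_f$ is a surjective submersion from $\Sp{n-1}_\varepsilon \setminus L_f$ onto $\Sp{1}$; to upgrade this to a locally trivial fibration I would cover $\Sp{1}$ by small arcs $U$, lift the canonical unit tangent field of $\Sp{1}$ over $U$ to a smooth vector field $\xi$ on $\pr{\phi_f}(U)$ using a partition of unity together with the submersion property, and use the flow of $\xi$ to trivialise $\pr{\phi_f}(U)$ as $\mathfrak{F} \times U$, with $\mathfrak{F}$ a model fibre.

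The main obstacle is completeness of this flow, since the fibres accumulate on the binding $L_f$ and are non-compact. I would resolve this by arranging, in a tubular neighbourhood $\mc{U}$ of $L_f$ in $\Sp{n-1}_\varepsilon$, that $\xi$ coincides with the angular vector field of a local product structure $\mc{U} \cong L_f \times \D^{2}$ coming from the normal disc bundle of $L_f$. The $d$-regularity gives transversality of the canonical pencil with $\Sp{n-1}_\varepsilon$ uniformly near $L_f$, which allows the angular field near $L_f$ and the lifted field away from $L_f$ to be glued smoothly via a cut-off. Once $\xi$ is constructed globally on $\pr{\phi_f}(U)$ with complete flow, local triviality follows by the standard Ehresmann-type argument, exactly as in the proof of \cite[Th.~11.2]{milnor:singular}.
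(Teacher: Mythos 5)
Your argument is correct and is essentially the proof the paper relies on: the theorem is not proved in the text but quoted from \cite[Th.~5.3, Cor.~5.4]{MR2647448} together with the proof of \cite[Th.~11.2]{milnor:singular}, and those sources proceed exactly as you do --- first identifying $d$-regularity with the submersion property of $\phi_f$ on all sufficiently small spheres, via the observation that the fibres of $\phi_f$ are the intersections with $\Sp{n-1}_{\varepsilon}$ of the halves of the pencil members $X_\ell \setminus \pr{f}(0)$, and then upgrading the submersion to a locally trivial fibration by integrating a lifted vector field matched near $L_f$ to the angular field of a product neighbourhood. The only cosmetic corrections: the fibre $\pr{\phi_f}(\theta)$ is the intersection of one half of $X_\ell\setminus\pr{f}(0)$ with the sphere rather than that half itself, and the product structure $\mc{U}\cong L_f\times\D^{2}$ on which you base the completeness argument comes from transversality of the nearby tubes $\pr{f}(c)$ with $\Sp{n-1}_{\varepsilon}$ (guaranteed by the choice of $\varepsilon$ in the setup), not from $d$-regularity itself; both fields are lifts of the unit tangent field of $\Sp{1}$, so the gluing and the Ehresmann argument go through as you indicate.
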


\begin{rmk}
In fact, Theorem~\ref{fibthdr} appears in \cite{MR2647448} in more generality than here. It holds as well for $d$-regular real analytic functions with isolated critical value, which satisfy the Thom $a_f$-condition.
\end{rmk}

Next theorem is given as \cite[Theorem~1]{MR0358797} and the proof is essentially the same we present here. The proof is included here for clarity since there are involved other results that also differ slightly from the complex case.

\begin{thm}\label{rsuskn}
Let $f \colon (\R^n,0) \to (\R^2,0)$ be a $d$-regular real analytic germ with isolated critical point at the origin. Let $\varepsilon > 0$ be small enough and let $L_f= \pr{f}(0) \cap \Sp{n-1}$ be the link of the singularity at the origin. Let $F \colon (\R^n \times \C,0) \cong (\R^{n+2},0) \to (\R^2,0)$ be the map defined by 
\begin{equation*}
F(x_1, \ldots, x_n, z)= f(x_1, \ldots, x_n)+z^r \ ,
\end{equation*}
and denote by $\LF= \pr{F}(0) \cap \Sp{n+1}$ its link at the origin. Then the pair $\kn{n+1}{\LF}$ is the  $r$-fold cyclic suspension of the knot $\kn{n-1}{L_f}$.
\end{thm}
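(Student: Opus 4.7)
The plan is to realise $\Sp{n+1}$ explicitly as the boundary of a polydisk and to identify $L_F$ inside it as the preimage of a transverse copy of $L_f$ under an explicit $r$-fold cyclic branched cover, thereby matching the abstract cyclic suspension construction of Section \ref{cs}. I first choose a Milnor radius $\varepsilon>0$ for $F$ together with $\varepsilon'>0$ small enough that $|f(x)|^{1/r}<\varepsilon$ for all $x\in\B{n}_{\varepsilon'}$, and set $\BD{n+2}=\B{n}_{\varepsilon'}\times\D^2_\varepsilon$. A polydisk-versus-ball comparison in the spirit of \cite{durfee:neighalgs} identifies $L_F$ with $F^{-1}(0)\cap\partial\BD{n+2}$, and the choice of $\varepsilon'$ forces this intersection to lie entirely in the piece $\Sp{n-1}_{\varepsilon'}\times\D^2_\varepsilon$ of $\partial\BD{n+2}$.

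I then view $\partial\BD{n+2}\cong\Sp{n+1}$ as the ambient of the trivial knot $\kn{n+1}{\Sp{n-1}}$ with $\Sp{n-1}$ embedded as $\Sp{n-1}_{\varepsilon'}\times\{0\}$ and with associated open book structure $b'(x,z)=z$. Applying Lemma \ref{brcvkn} to $b'$, the $r$-fold cyclic branched cover $\pi'_r\colon\br\kn{n+1}{\Sp{n-1}}\to\Sp{n+1}$ is realised concretely on $\partial\BD{n+2}$ by the formula $\pi'_r(x,z)=(x,z^r)$, branched along $\Sp{n-1}_{\varepsilon'}\times\{0\}$, with total space again diffeomorphic to $\Sp{n+1}$. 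On the fibred-knot side, the $d$-regularity of $f$ together with Theorem \ref{fibthdr} make $(\Sp{n-1}_{\varepsilon'},L_f)$ a fibred knot in the sense of Section \ref{cs}.

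Next I introduce the embedding $j\colon\Sp{n-1}_{\varepsilon'}\to\partial\BD{n+2}$ defined by $j(x)=(x,-f(x))$. Its image lies inside $\Sp{n-1}_{\varepsilon'}\times\D^2_\varepsilon$ by the choice of $\varepsilon'$, meets the binding $\Sp{n-1}_{\varepsilon'}\times\{0\}$ exactly along $L_f$, and is transverse to it there because the Jacobian $Df$ has full rank at every point of $L_f$ (the origin being the only critical point of $f$). Thus $j$ satisfies the hypotheses of Lemma \ref{embij}, so up to isotopy it is the canonical embedding used to build the cyclic suspension, and by definition its lift to the branched covers has image $L_f\otimes\br$. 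The key computation is then
\begin{equation*}
(\pi'_r)^{-1}\bigl(j(\Sp{n-1}_{\varepsilon'})\bigr)=\{(x,z)\in\partial\BD{n+2}:z^r=-f(x)\}=F^{-1}(0)\cap\partial\BD{n+2}=L_F,
\end{equation*}
which identifies $(\Sp{n+1},L_F)$ with the cyclic suspension $\kn{n+1}{L_f\otimes\br}$.

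The main difficulty will be ensuring coherence between two descriptions of the branched cover $\br\kn{n-1}{L_f}$: the abstract one, built from the open book structure associated to $L_f$ by $d$-regularity, and the concrete one obtained by restricting $\pi'_r$ to $L_F$. In other words, the non-trivial point is verifying that $\pi'_r|_{L_F}\colon L_F\to j(\Sp{n-1}_{\varepsilon'})$ is indeed an $r$-fold cyclic branched cover branched along $L_f$ which, under the identification $j(\Sp{n-1}_{\varepsilon'})\cong\Sp{n-1}_{\varepsilon'}$, coincides with the canonical one from Lemma \ref{brcvkn}. This relies on the universal property of pull-backs and on $d$-regularity, the latter ensuring that the open book on $\Sp{n-1}_{\varepsilon'}$ can be taken to be compatible, up to isotopy, with the restriction of $b'$ to the graph of $-f$.
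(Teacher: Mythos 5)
Your proposal is correct and follows essentially the same route as the paper's own proof: the explicit branched cover $(x,z)\mapsto(x,z^r)$, the graph of $-f$ identified (via the linear isotopy $tf(x)+z=0$) with the pushed-off copy of the trivial knot from Lemma~\ref{embij}, and the observation that its preimage under the cover is exactly $F^{-1}(0)$. The only divergence is the final normalisation step, where the paper replaces your polydisk boundary and appeal to Durfee by the modified sphere $x_1^2+\cdots+x_n^2+|z|^{2r}=\varepsilon^2$ together with an explicit vector field built from $d$-regularity (Lemma~\ref{vbmo}) pushing one knot pair onto the other.
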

In the proof of this theorem we need the two following results. The first one gives a characterisation of a $d$-regular function. The second one is a generalisation of \cite[Lemma~4.1]{MR0488073} and the proof presented here is basically the same presented there.

For more clarity, from now on we denote a point $(x_1, \ldots, x_n) \in \R^n$ by $x$ and the vector $(x_1, ...,x_n) \in T_x \R^n \cong \R$ by $\vc{x}$.

\begin{lem}[{\cite[Lemma~5.2]{MR2647448}}]\label{vdr}
Let $U$ be an open neighbourhood of $0 \in \R^n$ with $n > 1$. Let $k \leq n$ and let $f \colon (U,0) \to (\R^k,0)$ be an analytic map defined on $U$ with isolated critical point at $0$. The map $f$ is $d$-regular, if and only if there exists a smooth vector field $\tilde{v}$ on $\B{}_{\varepsilon} \setminus \pr{f}(0)$ which has the following properties:
\begin{enumerate}[i)]
\item It is radial; \ie it is transverse to all spheres in $\B{}_{\varepsilon}$ centred at $0$ pointing outwards;  
\item it is tangent to each $X_{\ell} \setminus \pr{f}(0)$, whenever it is not empty;
\item it is transverse to all the tubes $\pr{f}(\partial \D_{\delta})$.
\end{enumerate}
\end{lem}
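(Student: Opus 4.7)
The statement is an equivalence. The direction ($\Leftarrow$) is immediate: assume $\tilde v$ exists with properties (i)--(iii), fix a small sphere $S=S_\varepsilon$ centred at $0$ (for the metric induced by $\mu$), and take any $\ell\in\rp{k-1}$ and $x\in X_\ell\cap S$. Property (ii) gives $\tilde v(x)\in T_xX_\ell$, while property (i) gives $\tilde v(x)\notin T_xS$. Since $T_xS$ has codimension one in $T_x\R^n$, adjoining any vector outside it spans $T_x\R^n$, so $T_xX_\ell+T_xS=T_x\R^n$. Hence $X_\ell$ meets $S$ transversely, which is precisely the $d$-regularity of $f$.

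For ($\Rightarrow$), the natural candidate to propose is
\begin{equation*}
\tilde v_0(x)\;=\;\operatorname{proj}_{T_xX_\ell}(\vc{x}),\qquad \ell=[f(x)],
\end{equation*}
the orthogonal projection of the outward radial vector $\vc{x}$ onto the tangent space of the leaf of the canonical pencil through $x$. Because $f$ has an isolated critical point at $0$, the classifying map $\beta\colon \B{}_\varepsilon\setminus\pr{f}(0)\to\rp{k-1}$ sending $x$ to $[f(x)]$ is a submersion off $\pr{f}(0)$, so $\{X_\ell\}$ is a smooth foliation and the bundle projection onto $TX_\ell$ is smooth. By $d$-regularity, $\vc{x}$ is not normal to $X_\ell$ at any point of $\B{}_\varepsilon\setminus\pr{f}(0)$, since otherwise the sphere of radius $|x|$ would be tangent to $X_\ell$ at $x$; therefore $\tilde v_0$ is nowhere zero. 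It is tangent to $X_\ell$ by construction, yielding property (ii), and $\langle\tilde v_0,\vc{x}\rangle=|\tilde v_0|^2>0$ yields property (i).

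The main obstacle is property (iii). Since $\tilde v_0\in T_xX_\ell$ and $X_\ell=\pr{f}(\calm{L}_\ell)$, we have $df_x(\tilde v_0)\in T_{f(x)}\calm{L}_\ell$, i.e., along the line through $0$ and $f(x)$ in $\R^k$. This line is orthogonal to $T_{f(x)}\partial\D_\delta$, so transversality of $\tilde v_0$ to the tube $\pr{f}(\partial\D_\delta)$ reduces to the single scalar condition $df_x(\tilde v_0(x))\neq 0$ for every $x$. If this fails at some points, I would correct $\tilde v_0$ as follows. Write $df_x(\tilde v_0(x))=a(x)\,f(x)$ with $a$ a smooth real-valued function, and use that $f|_{X_\ell\setminus\pr{f}(0)}\colon X_\ell\setminus\pr{f}(0)\to\calm{L}_\ell\setminus\{0\}$ is a submersion (which follows from the isolated critical point hypothesis) to define a smooth vector field $w$ on $\B{}_\varepsilon\setminus\pr{f}(0)$, tangent to the pencil, by taking $w(x)$ to be the ambient-metric horizontal lift of $f(x)\in T_{f(x)}\calm{L}_\ell$; then $df(w)=f$ is nowhere zero and tangent to $\calm{L}_\ell$. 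Set $\tilde v=\tilde v_0+\phi\,w$ for a positive smooth function $\phi$. Then $df(\tilde v)=(a+\phi)\,f$, which is nonzero as long as $\phi(x)\neq -a(x)$ pointwise, while the radial condition (i) persists as long as $\phi(x)|\langle w(x),\vc{x}\rangle|<|\tilde v_0(x)|^2$, an open condition satisfied for $\phi$ sufficiently small. A standard partition-of-unity argument on $\B{}_\varepsilon\setminus\pr{f}(0)$ produces such a $\phi$ simultaneously satisfying both constraints: over regions where $a\ge 0$ any small positive $\phi$ works, and over regions where $a<0$ one chooses $\phi<\tfrac12|a|$ locally. Tangency of $\tilde v$ to the pencil and smoothness are automatic, so properties (i)--(iii) all hold, completing the construction and hence the equivalence.
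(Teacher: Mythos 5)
A preliminary remark: the paper does not prove this lemma at all --- it is imported verbatim from \cite[Lemma~5.2]{MR2647448} --- so there is no internal proof to compare against, and I can only assess your argument on its own terms. Your ($\Leftarrow$) direction is correct (modulo the harmless convention, shared by the paper's own Remark~\ref{conddr}, of taking the auxiliary metric $\mu$ to be the Euclidean one). The ($\Rightarrow$) direction, however, has a genuine gap, and it sits exactly where the real content of the lemma lies.

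Decompose your $\tilde v_0(x)=\mathrm{proj}_{T_xX_\ell}(\vc{x})$ as $\tilde v_0=\tilde v_0^{\mathrm{vert}}+a\,w$, where $\tilde v_0^{\mathrm{vert}}$ is tangent to the fibre $\pr{f}(f(x))$ and $w$ is your horizontal lift; one checks $\langle w,\vc{x}\rangle=a|w|^2$, so the two constraints on $\phi$ read $|\tilde v_0|^2+\phi\,a|w|^2>0$ (radiality) and $a+\phi\neq 0$ (tube-transversality, since $D_xF(\tilde v)=(a+\phi)f(x)$). Your prescription makes $a+\phi<0$ wherever $a\leq -\eta$ (there $\phi<\tfrac12|a|$) and $a+\phi>0$ wherever $a\geq 0$ (there $\phi>0$). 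Since $\B{}_{\varepsilon}\setminus\pr{f}(0)$ is connected in the case of interest ($k=2$, so $\pr{f}(0)$ has codimension $2$), the continuous function $a+\phi$ must vanish somewhere in between whenever both regions are nonempty; at such a point $\tilde v$ is tangent to the tube and (iii) fails. This is an intermediate-value obstruction: no partition of unity, however cleverly weighted, can produce a positive $\phi$ avoiding the value $-a$ once you have committed to $a+\phi$ being negative somewhere and positive somewhere else.

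The only escape is to force $a+\phi>0$ everywhere, which at a point with $a<0$ requires $|a|<\phi<\bigl(|\tilde v_0^{\mathrm{vert}}|^2+a^2|w|^2\bigr)/\bigl(|a|\,|w|^2\bigr)$; this interval is nonempty precisely when $\tilde v_0^{\mathrm{vert}}\neq 0$. So your construction succeeds if and only if there is \emph{no} point of $\B{}_{\varepsilon}\setminus\pr{f}(0)$ at which $\mathrm{proj}_{T_xX_\ell}(\vc{x})$ is a \emph{negative} multiple of $w(x)$, i.e.\ no point where the gradients of $\norm{x}^2$ and $\norm{f(x)}^2$ restricted to the leaf $X_\ell$ point in exactly opposite directions. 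That statement is \emph{not} a consequence of $d$-regularity, which only gives $\tilde v_0\neq 0$; it is the real-analytic analogue of Milnor's Lemma~4.3/5.9 and is established (in \cite{MR2647448}, as in \cite{milnor:singular}) by applying the Curve Selection Lemma to the semianalytic set of such ``antipodal'' points to show it misses a punctured neighbourhood of the origin. That analytic input is the heart of this implication, and it is entirely absent from your proposal; without it, neither your choice of $\phi$ nor any other can be shown to exist.
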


\begin{rmk}\label{conddr}
Notice that in Lemma~\ref{vdr}, the first condition is equivalent to ask that
\begin{equation}\label{it:pr1}
\langle \tilde{v}(x), \vc{x} \rangle_{\R^n} > 0 \quad \text{for all} \ \ x \in \B{}_{\varepsilon} \setminus \pr{f}(0) \ ,
\end{equation}
where $\langle \cdot, \cdot \rangle_{\R^n}$ is the Euclidean product in $\R^n$.

Moreover, the vector field $\tilde{v}$ can be adjusted, multiplying it by a positive real function such that the second and third conditions can be interpreted as
\begin{equation}\label{it:pr2}
D_x f (\tilde{v}(x)) = \vc{f(x)} \ .
\end{equation}
\end{rmk}

\begin{lem}\label{vbmo}
Let $f \colon (\R^n,0) \to (\R^2,0)$ be a $d$-regular real analytic germ with isolated critical point at the origin. For sufficiently small $\varepsilon > 0$, there exists a smooth vector field $v$ on $\B{}_{\varepsilon} \setminus \{0\}$ which satisfies:
\begin{enumerate}[I)]
\item $v$ lies over the radial vector field $w(x)=\vc{x}$ on $\C \cong \R^2$; \ie $v$ projects by $Df$ onto $\vc{x}$,
\item $\norm{x}$ increases along trajectories of $v$.
\end{enumerate}
\end{lem}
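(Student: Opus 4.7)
The plan is to combine the vector field coming from $d$-regularity on $\B{}_{\varepsilon}\setminus\pr{f}(0)$ with local vector fields built by hand near each point of $\pr{f}(0)\setminus\{0\}$, and to glue everything with a partition of unity. Both defining properties $(I)$ and $(II)$ are preserved under non-negative linear combinations, so such a gluing will automatically produce the desired $v$.

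First, since $f$ is $d$-regular, Lemma~\ref{vdr} together with Remark~\ref{conddr} yield a smooth vector field $\tilde v$ on $\B{}_{\varepsilon}\setminus\pr{f}(0)$ such that $D_xf(\tilde v(x))=\vc{f(x)}$ and $\langle\tilde v(x),\vc{x}\rangle>0$; thus $(I)$ and $(II)$ are already satisfied away from the zero locus. The real task is to extend across $\pr{f}(0)\setminus\{0\}$.

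Next, I build local models near each $x_{0}\in\pr{f}(0)\setminus\{0\}$. Since the only critical point of $f$ is the origin, $D_{x_{0}}f$ has rank $2$, so $\ker Df$ is a smooth subbundle of rank $n-2$ on a neighbourhood of $x_{0}$; pick a smooth linear right inverse $s_{y}\colon\R^{2}\to T_{y}\R^{n}$ of $D_{y}f$ on such a neighbourhood (for instance, by inverting $D_{y}f$ on the orthogonal complement of $\ker D_{y}f$). Because $\varepsilon$ was chosen so that every sphere meets $\pr{f}(0)$ transversely, at $x_{0}$ one has
\begin{equation*}
\ker D_{x_{0}}f+T_{x_{0}}\Sp{n-1}_{\|x_{0}\|}=T_{x_{0}}\R^{n},
\end{equation*}
hence there exists $w_{0}\in\ker D_{x_{0}}f$ with $\langle w_{0},\vc{x_{0}}\rangle>0$. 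Extend $w_{0}$ smoothly to a local section $w$ of $\ker Df$ and set
\begin{equation*}
v_{x_{0}}(y)=s_{y}(\vc{f(y)})+c\,w(y),\qquad c>0.
\end{equation*}
Then $D_{y}f(v_{x_{0}}(y))=\vc{f(y)}$, and at $y=x_{0}$ the first summand vanishes because $f(x_{0})=0$, so $\langle v_{x_{0}}(x_{0}),\vc{x_{0}}\rangle=c\langle w_{0},\vc{x_{0}}\rangle>0$. By continuity $(I)$ and $(II)$ persist on a small open neighbourhood $U_{x_{0}}$ of $x_{0}$.

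Finally, I glue. The collection $U_{0}=\B{}_{\varepsilon}\setminus\pr{f}(0)$, equipped with $\tilde v$, together with the neighbourhoods $\{U_{x_{0}}\}_{x_{0}\in\pr{f}(0)\setminus\{0\}}$ and their $v_{x_{0}}$'s, forms an open cover of $\B{}_{\varepsilon}\setminus\{0\}$. Take a subordinate smooth partition of unity $\{\rho_{\alpha}\}$ and set $v=\sum_{\alpha}\rho_{\alpha}v_{\alpha}$. Property $(I)$ is $\R$-linear in $v$, and property $(II)$ survives because a non-negative combination of strictly positive quantities, with at least one strictly positive weight, is strictly positive. Hence $v$ has both required properties on all of $\B{}_{\varepsilon}\setminus\{0\}$. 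The main obstacle is the local construction near $\pr{f}(0)\setminus\{0\}$: one must really use the transversality of the small spheres to $\pr{f}(0)$ (built into the choice of $\varepsilon$) to extract a vector in $\ker Df$ with strictly positive radial component, after which the rest of the argument is formal.
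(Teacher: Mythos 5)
Your proof is correct and is essentially the argument the paper relies on: the paper simply defers to \cite[Lemma~4.1]{MR0488073}, whose proof is exactly this — use the $d$-regularity vector field of Lemma~\ref{vdr} and Remark~\ref{conddr} (in place of Milnor's) away from $\pr{f}(0)$, build local lifts of the radial field near $\pr{f}(0)\setminus\{0\}$ using submersivity together with the transversality of the small spheres to $\pr{f}(0)$ to get a vector in $\ker Df$ with positive radial component, and glue with a partition of unity. One small imprecision: condition (I) is affine rather than linear in $v$, so it is preserved only because the partition-of-unity weights sum to $1$, not under arbitrary non-negative linear combinations.
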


This result is given as \cite[Lemma~4.1]{MR0488073} for a complex polynomial with isolated singularity. The proof of Lemma~\ref{vbmo} is the same as proof of \cite[Lemma~4.1]{MR0488073} if we substitute the complex polynomial by $f$, and the vector field obtained by \cite[Lemma~5.9]{milnor:singular} by the vector field given by Lemma~\ref{vdr} and Remark~\ref{conddr}.
%
\begin{proof}[Proof of Theorem~\ref{rsuskn}]
Let
\begin{equation*}
\Sp{n+1}_{\varepsilon} = \{(x,z) \in \R^n \times \C \cong \R^{n+2} \mid \norm{(x,z)} = \varepsilon \} \ ,
\end{equation*}
and let
\begin{equation*}
\Sp{}_{\varepsilon}(t) = \{(x,z) \in \Sp{n+1}_{\varepsilon} \mid tf(x)+z=0 \} \quad \text{for} \quad 0 \leq t \leq 1 \ .
\end{equation*}
For $\varepsilon$ small and any $t$, $\Sp{}_{\varepsilon}(t)$ is the intersection of the sphere $\Sp{n+1}_{\varepsilon}$ and the smooth hypersurface $tf(x)+z=0$, thus $\Sp{}_{\varepsilon}(t)$ is a $(n-1)$-sphere. Hence $\{\Sp{}_{\varepsilon}(t)\}_{0 \leq t \leq 1}$ gives an isotopy between the standard sphere $\Sp{}_{\varepsilon}(0) \cong \Sp{n+1}_{\varepsilon}$ and $\Sp{}_{\varepsilon}(1)$.

Also $\Sp{}_{\varepsilon}(t)$ intersects transversely $\Sp{}_{\varepsilon}(1)$ and $\Sp{}_{\varepsilon}(t) \cap \Sp{}_{\varepsilon}(1) = L_f$ for all $t < 1$.

Take $\Sp{}_{\varepsilon}(1) \subset \Sp{n+1}_{\varepsilon}$ as the ``standard embedding''  and let
\begin{equation*}
\left(\Sp{}_{\varepsilon}(0),L_f \right) \subset \left(\Sp{n+1}_{\varepsilon}, \Sp{}_{\varepsilon}(0) \right)
\end{equation*}
be the embedding $j$ of Lemma~\ref{embij}. Let
\begin{equation*}
\overline{\mathbb{S}}^{n+1}_{\varepsilon, r} = \{ (x,z) \in \R^n \times \C \cong \R^{n+2} \mid x_1^2 + \cdots + x_n^2 + \va{z}^{2r} = \varepsilon^2 \}
\end{equation*}
and
\begin{equation*}
\overline{L}_F = \overline{\mathbb{S}}^{n+1}_{\varepsilon, r} \cap \pr{F}(0) \ .
\end{equation*}
Then the map $\pi \colon \overline{\mathbb{S}}^{n+1}_{\varepsilon, r} \to \Sp{n+1}_{\varepsilon}$ given by $\pi(x,z)= (x, z^r)$ gives a branched $r$-covering
\begin{equation*}
(\overline{\mathbb{S}}^{n+1}_{\varepsilon, r}, \overline{L}_F) \to (\Sp{n+1}_{\varepsilon}, \Sp{}_{\varepsilon}(1))
\end{equation*}
branched along $(\Sp{}_{\varepsilon}(0), L_f)$, and hence identifies $(\overline{\mathbb{S}}^{n+1}_{\varepsilon, r}, \overline{L}_F)$ as the $r$-fold cyclic suspension of $(\Sp{n-1}_{\varepsilon},L_f)$.

Thus, it just remains to show that the knot $(\overline{\mathbb{S}}^{n+1}_{\varepsilon, r}, \overline{L}_F)$ is diffeomorphic to the knot $(\Sp{n+1}_{\varepsilon}, \LF)$. This is done by pushing the pair $(\Sp{n+1}_{\varepsilon}, \LF)$ out to the other knot along a vector field defined in a small ball $(\B{n+2}_{\varepsilon'} \setminus \{0\}) \subset \R^{n+2} \setminus \{0\}$. Such a vector field can be obtained as follows:

By Lemma~\ref{vbmo}, there is a vector field $v$ on a small ball $\B{n}_{\varepsilon'} \setminus \{0\}$ such that $v$ lies over the radial vector field on $\R^2$ and $\langle v(x),\vc{x} \rangle_{\R^n}$ has positive real part (see Remark~\ref{conddr}). Then the vector field we are looking for is given by $v_1(x,z)=(v(x),z/r)$ on $(\R^{n+2} \times \C) \setminus (\{0\} \times \C)$ and $v_2(x,z)=(0,z)$ in a thin neighbourhood of $\{0\} \times (\C \setminus 0)$; so pasting $v_1$ and $v_2$ with a partition of unity we obtain the required vector field $v$.
\end{proof}

The singularity at the origin  of $F$ is called a \textbf{suspension singularity} of type $\fbg +z^r$.

\subsection{Join Theorem for $d$-regular functions}\label{jtdrf}

The aim in this section is to describe the homotopy type of the Milnor fibre $\F$ of the germ $F=f + z^r$ in terms of the homotopy type of the Milnor fibre $\F_f$ of $f$, where $f \colon (\R^n,0) \to (\R^2)$ is a $d$-regular function. For this, we first show that the function $F=f+z^r$ is $d$-regular, which assures that it has Milnor fibration with projection $\frac{F}{\norm{F}}$.

\begin{prop}\label{Fdr}
Let $f \colon (\R^n,0) \to (\R^2,0)$ be a $d$-regular real analytic germ with an isolated critical point at the origin. Let $F \colon (\R^n \times \C,0) \cong (\R^{n+2}) \to (\R^2,0)$ be the map defined by $F(x, z)= f(x)+z^r$. Then $F$ is $d$-regular.
\end{prop}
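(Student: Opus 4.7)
The plan is to apply the characterisation of $d$-regularity given by Lemma~\ref{vdr} and Remark~\ref{conddr}: it suffices to construct a smooth vector field $\tilde{V}$ on $\B{n+2}_{\varepsilon}\setminus\pr{F}(0)$ satisfying $DF(\tilde{V}) = \vc{F(x,z)}$ and $\langle \tilde{V}(x,z),(\vc{x},\vc{z})\rangle > 0$. The key observation is that $\pr{F}(0)$ decomposes as $(\pr{f}(0)\times\{0\})\cup\{(x,z):z\neq 0,\, f(x)=-z^r\}$, which naturally suggests covering the complement by two open sets, each handled with its own local model.

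Consider the open sets
\begin{equation*}
U_1 = (\B{n}_{\varepsilon}\setminus\pr{f}(0))\times\B{2}_{\varepsilon}, \qquad U_2 = \{(x,z)\in\B{n+2}_{\varepsilon} : |f(x)| < \tfrac{1}{2}|z|^r\}.
\end{equation*}
They form a cover of $\B{n+2}_{\varepsilon}\setminus\pr{F}(0)$: any $(x,z)\notin U_1\cup U_2$ would satisfy simultaneously $f(x)=0$ and $|f(x)|\geq\tfrac{1}{2}|z|^r$, which forces $z=0$ and hence $(x,z)\in\pr{F}(0)$.

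On $U_1$, take $\tilde{v}$ to be the $d$-regularity vector field for $f$ supplied by Lemma~\ref{vdr}, and define $\tilde{V}_1(x,z)=(\tilde{v}(x),z/r)$. Then $DF(\tilde{V}_1) = Df(\tilde{v}(x)) + rz^{r-1}(z/r) = \vc{f(x)} + z^r = \vc{F(x,z)}$ and $\langle\tilde{V}_1,(\vc{x},\vc{z})\rangle = \langle\tilde{v}(x),\vc{x}\rangle + |z|^2/r > 0$. On $U_2$, set $\tilde{V}_2(x,z) = (0,\vc{F(x,z)}/(rz^{r-1}))$, which is well-defined since $z\neq 0$ on $U_2$. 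By construction $DF(\tilde{V}_2) = \vc{F(x,z)}$, and the radial inner product reduces to
\begin{equation*}
\frac{\mathrm{Re}(\vc{F(x,z)}\,\bar{z}^r)}{r|z|^{2(r-1)}} = \frac{|z|^{2r}+\mathrm{Re}(\vc{f(x)}\,\bar{z}^r)}{r|z|^{2(r-1)}} \geq \frac{|z|^{2r}-|f(x)|\,|z|^r}{r|z|^{2(r-1)}} > \frac{|z|^2}{2r} > 0,
\end{equation*}
where the defining inequality of $U_2$ is used at the last step.

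To conclude, pick a smooth partition of unity $\{\phi_1,\phi_2\}$ subordinate to $\{U_1,U_2\}$ and define $\tilde{V} = \phi_1\tilde{V}_1 + \phi_2\tilde{V}_2$. Linearity of $DF$ gives $DF(\tilde{V}) = (\phi_1+\phi_2)\vc{F(x,z)} = \vc{F(x,z)}$, and the radial component $\phi_1\langle\tilde{V}_1,(\vc{x},\vc{z})\rangle + \phi_2\langle\tilde{V}_2,(\vc{x},\vc{z})\rangle$ is strictly positive because each summand is non-negative and at every point at least one $\phi_i$ is positive at a location where $\tilde{V}_i$ has strictly positive radial component. Lemma~\ref{vdr} then yields that $F$ is $d$-regular. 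The only genuine subtlety is the choice of $U_2$: it must be wide enough to cover all points with $f(x)=0$ and $z\neq 0$ (which is automatic since $|f(x)|=0<\tfrac{1}{2}|z|^r$ there), yet narrow enough that the simple formula for $\tilde{V}_2$ retains positive radial component; the threshold $\tfrac{1}{2}$ is one convenient way to do both at once.
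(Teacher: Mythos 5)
Your proof is correct, and it follows the same basic strategy as the paper: reduce $d$-regularity of $F$ to the vector-field characterisation of Lemma~\ref{vdr} and Remark~\ref{conddr}, and build the required field out of the $d$-regularity field $v_1$ for $f$ together with the radial field $\vc{z}/r$, which is exactly the field the paper uses for $\rho_r(z)=z^r$. The difference is in how the domain is handled. The paper simply declares $v(x,z)=(v_1(x),v_2(z))$ to be a vector field on all of $\B{n+2}_{\varepsilon}\setminus\pr{F}(0)$, even though $v_1$ is only supplied by Lemma~\ref{vdr} on $\B{n}_{\varepsilon}\setminus\pr{f}(0)$ and need not extend over $\pr{f}(0)$; the locus $\{f(x)=0,\ z\neq 0\}$, which lies in $\B{n+2}_{\varepsilon}\setminus\pr{F}(0)$, is passed over in silence (and the assertion that \emph{both} summands of the radial pairing are positive also fails at $z=0$, though there the other summand saves the inequality). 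Your two-chart construction --- the product field on $U_1$ where $f(x)\neq 0$, the purely vertical field $\bigl(0,\vc{F(x,z)}/(rz^{r-1})\bigr)$ on $U_2=\{|f(x)|<\tfrac12|z|^r\}$, glued by a partition of unity using that the conditions $DF(\tilde V)=\vc{F}$ and positivity of the radial pairing are both convex --- addresses precisely this point, and your covering and positivity estimates check out. So your argument is not a different method so much as a rigorous completion of the paper's; what it buys is that no extension or boundedness property of $v_1$ near $\pr{f}(0)$ needs to be assumed.
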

\begin{proof}
Let $\varepsilon_1, \varepsilon_2 >0$ be such that $\Sp{n+1}_{\varepsilon_1}$ is a Milnor ball for $F$  and $\Sp{n-1}_{\varepsilon_2}$ is a Milnor ball for $f$. Let
\begin{equation*}
\varepsilon = \min \{\varepsilon_1, \varepsilon_2\}
\end{equation*}
and let us consider $\Sp{n+1}_{\varepsilon}$ and $\Sp{n-1}_{\varepsilon} \subset \Sp{n+1}_{\varepsilon}$, where the latter is defined by
\begin{equation*}
\Sp{n-1}_{\varepsilon} = \left\{(x, 0) \in \Sp{n+1}_{\varepsilon} \right\} \ .
\end{equation*}

By Lemma~\ref{vdr}, there exists a smooth vector field $v_1$ on $\B{n}_{\varepsilon} \setminus \pr{f}(0)$ such that
\begin{enumerate}[i)]
\item $\langle v_1(x), \vc{x} \rangle_{\R^n} > 0$,
\item $D_x f (v_1(x))= \vc{f(x)}$,
\end{enumerate}
for all $x \in \B{n}_{\varepsilon} \setminus \pr{f}(0)$.

On the other hand, let $\rho_r \colon \C \cong \R^2 \to \C \cong \R^2$ be the function defined by $\rho_r(z)=z^r$ and let $v_2$ be the radial vector field on $\C \cong \R^2$ defined by $v_2(z)= \frac{\vc{z}}{r}$, then
\begin{enumerate}[i)]
\item $\langle v_2(z), \vc{z} \rangle_{\R^2} > 0$,
\item $D_z \rho_r (v_2(z))= \vc{\rho_r{z}}$, 
\end{enumerate}
for all $z \in \C \setminus \{0\}$.

In order to prove that $F$ is $d$-regular, the idea is to find a vector field $v$ on $\B{n+2}_{\varepsilon} \setminus \pr{F}(0)$ such that
\begin{enumerate}[i)]
\item $\langle v(x,z), \vc{(x,z)} \rangle > 0$ for all $(x,z) \in \B{n+2}_{\varepsilon} \setminus \pr{F}(0)$,\label{v1}
\item $D_{(x,z)} F (v(x,z))= \vc{F(x,z)}$ for all $(x,z) \in \B{n+2}_{\varepsilon} \setminus \pr{F}(0)$.\label{v2}
\end{enumerate}
Let $v$ be the vector field on $\B{n+2}_{\varepsilon} \setminus \pr{F}(0)$ defined by $v(x,z)=(v_1(x),v_2(z))$. Then
\begin{equation*}
\langle v(x,z), \vc{(x,z)}\rangle_{\R^{n+2}} = \langle v_1(x), \vc{x}\rangle_{\R^n} + \langle v_2(z), \vc{z} \rangle_{\R^2} > 0 \ ,
\end{equation*}
since both terms are greater than zero. Then condition \eqref{v1} follows (see Figure~\ref{fig:vsv1v2}). We also have that
\begin{align*}
D_{x,z} F (v(x,z)) &= D_{x,z} F (v_1(x),v_2(z)) = D_{x,z} F (v_1(x),\vc{0}) + D_{x,z} F (\vc{0},v_2(z)) \\
                  &= D_x f (v_1(x)) + D_z \rho_r (v_2(z)) = \vc{f(x)} + \vc{\rho_r(z)} = \vc{F(x,z)} \ .
\end{align*}
And condition \eqref{v2} holds. By Lemma~\ref{vdr} and Remark~\ref{conddr}, $F$ is $d$-regular.
\end{proof}

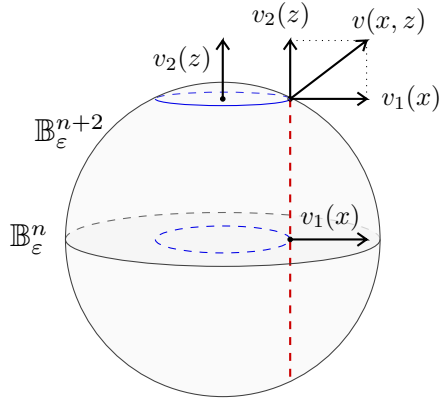
\begin{figure}[H]
\begin{center}
\begin{tikzpicture}[scale=1.7]
\draw[gray, dashed,opacity=.4] (35pt,0pt) arc (0:65:35pt and 6pt);
\draw[gray!50!black, dashed] (-35pt,0pt) arc (-180:-295:35pt and 6pt);
\draw[blue!90!black, dashed] (15pt,31.3pt) arc (0:180:15pt and 1.5pt);
\fill[gray!20!white, opacity=.2] (0pt,0pt) circle (35pt);
\fill[gray!30!white, opacity=.1] (0pt,0pt) ellipse (35pt and 6pt);
\draw[gray!50!black](0pt,0pt) circle (35pt); 
\draw[blue!90!black,dashed] (15pt,0pt) arc (0:180:15pt and 3pt);
\draw[red!80!black,thick,dashed] (15pt,31.2pt) -- (15pt,-31.2pt);
\draw[blue!90!black, dashed] (-15pt,0pt) arc (180:360:15pt and 3pt);
\draw[xshift=15pt, thick] (0pt,0pt) -- ++(0:17pt) -- +(145:2.5pt) (0pt,0pt) ++(0:17pt) -- +(215:2.5pt);
\draw[yshift=31.3pt, thick] (0pt,0pt) -- ++(90:13pt) -- +(235:2.5pt) (0pt,0pt) ++(90:13pt) -- +(305:2.5pt);
\filldraw (15pt,0pt) circle (.5pt) (0pt,31.3pt) circle (.5pt);
\draw[blue!90!black] (-15pt,31.3pt) arc (180:360:15pt and 1.5pt);
\draw[gray!50!black] (-35pt,0pt) arc (180:360:35pt and 6pt);
\begin{scope}[xshift=15pt, yshift=31.3pt]
\draw (0pt,0pt) circle (.5pt);
\draw[thick] (0pt,0pt) -- ++(0:17pt) -- +(145:2.5pt) (0pt,0pt) ++(0:17pt) -- +(215:2.5pt)
      (0pt,0pt) -- ++(90:13pt) -- +(235:2.5pt) (0pt,0pt) ++(90:13pt) -- +(305:2.5pt);
\draw[dotted] (17pt,0pt) -- (17pt,13pt) (0pt,13pt) -- (17pt,13pt);
\draw[thick] (0pt,0pt) -- ++(17pt,13pt) -- +(182.4:2.5pt) (0pt,0pt) ++(17pt,13pt) -- +(252.4:2.5pt);
\end{scope}
\draw (-34pt,24pt) node[scale=1.2] {$\B{n+2}_{\varepsilon}$}
      (-43pt,0pt) node[scale=1.2] {$\B{n}_{\varepsilon}$}
      (-9pt,40pt) node {$v_2(z)$}
      (13pt,50pt) node {$v_2(z)$}
      (24pt,5pt) node {$v_1(x)$}
      (42pt,31.3pt) node {$v_1(x)$}
      (37pt,48.5pt) node {$v(x,z)$};
\end{tikzpicture}
\end{center}
\caption{The vector field $v$ on $\B{n+2}_{\varepsilon} \setminus \pr{F}(0)$.}
\label{fig:vsv1v2}
\end{figure}

By Theorem~\ref{fibthdr} and Proposition~\ref{Fdr}, one has the following result.

\begin{cor}
Given $F$ as in Proposition~\ref{Fdr}, $F$ has Milnor fibration with projection
\begin{equation*}
\phi_F= \frac{F}{\norm{F}} \colon \Sp{5}_{\varepsilon} \setminus \LF \to \Sp{1} \ .
\end{equation*}
\end{cor}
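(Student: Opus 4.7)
The plan is to apply the two preceding results in sequence. By Proposition~\ref{Fdr}, the germ $F=f+z^r$ is $d$-regular at the origin, and by hypothesis $F$ has an isolated critical point at $0$ (this was already noted in Section~\ref{Prim} using \cite[Proposition~1]{MR2922705} for the case of interest $f=\fbg$, and it holds more generally whenever $f$ has an isolated critical point, since the partial derivatives of $F$ with respect to $z$ vanish only at $z=0$, reducing the critical locus to $\{z=0\} \cap \{\mathrm{crit}(f)\}=\{0\}$).

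Once $d$-regularity and isolated critical point are in hand, the Fibration Theorem (Theorem~\ref{fibthdr}) applies directly to $F \colon (\R^{n+2},0) \to (\R^2,0)$, giving that
\begin{equation*}
\phi_F = \frac{F}{\norm{F}} \colon \Sp{n+1}_{\varepsilon} \setminus L_F \to \Sp{1}
\end{equation*}
is a locally trivial fibration for all sufficiently small $\varepsilon>0$. Specialising to $n=4$ (\ie\ $(x,y,z)\in\C^3\cong\R^6$) yields the stated Milnor fibration on $\Sp{5}_{\varepsilon} \setminus L_F$.

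There is essentially no obstacle: the corollary is a packaging of Proposition~\ref{Fdr} and Theorem~\ref{fibthdr}. The only thing worth being careful about is that Theorem~\ref{fibthdr} requires an isolated critical point, so the brief verification above (or a pointer back to the discussion at the start of Section~\ref{Prim}) should be mentioned explicitly before invoking the Fibration Theorem.
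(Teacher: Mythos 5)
Your proposal is correct and matches the paper's own treatment: the corollary is stated there as an immediate consequence of Proposition~\ref{Fdr} together with the Fibration Theorem~\ref{fibthdr}, with no further argument given. Your extra remark verifying that $F$ has an isolated critical point is a reasonable (and accurate) addition, consistent with the discussion at the start of Section~\ref{Prim}.
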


\begin{rmk}
Let $\rho_r \colon \C \to \C$ be the function defined by $\rho_r(z)=z^r$. Note that $\rho_r$ has a Milnor fibration with projection 
\begin{equation*}
\phi_{\rho_r}=\frac{z^r}{\va{z}^r} \colon \Sp{1}_{\varepsilon} \to \Sp{1}\ ,
\end{equation*}
for any $\varepsilon > 0$ and the Milnor fibre $\F_{\rho_r}$ consists of $r$ points in $\Sp{1}_{\varepsilon}$ and the monodromy $h_{\rho_r}$ is given by a cyclic permutation of this $r$ points.
\end{rmk}

The following result is an adaptation of \cite[Lemma~6.1]{MR0488073} to the particular case of cyclic suspensions given by links of singularities and it describes the homotopy type of the Milnor fibre of $F$ in terms of the Milnor fibres of $f$ and $\rho_r$.

\begin{thm}\label{jointh}
Let $f \colon (\R^n,0) \to (\R^2,0)$ be a $d$-regular real analytic germ with an isolated critical point at the origin. Consider its Milnor fibration with projection map $\phi_f=\frac{f}{\norm{f}}$ and let $\F_f$ be its fibre and let $h_f$ be its monodromy. Let $F \colon (\R^n \times \C,0) \cong (\R^{n+2}) \to (\R^2,0)$ be the map defined by $F(x, z)= f(x)+z^r$ and let $\phi_F=\frac{F}{\norm{F}}$ be the projection map of its Milnor fibration; let $\F$ be its Milnor fibre and $h_F$ its monodromy. 

Then there exists a homotopy equivalence $\alpha \colon \F_f \ast \F_{\rho_r} \to \F$ which is compatible with the monodromy maps and their join; \ie the following diagram commutes
\begin{equation}
\xymatrix{
\F_f \ast \F_{\rho_r} \ar[d]_{\alpha} \ar[rr]^{h_f \ast h_{\rho_r}} && \F_f \ast \F_{\rho_r} \ar[d]^{\alpha} \\
\F \ar[rr]^{h_F} && \F \\
}
\end{equation}
where $h_f \ast h_{\rho_r} \colon \F_f \ast \F_{\rho_r} \to \F_f \ast \F_{\rho_r}$ is the map defined by
\begin{equation*}
h_f \ast h_{\rho_r}([x,t,y])= [h_f(x),t,h_{\rho_r}(y)] \ .
\end{equation*}
\end{thm}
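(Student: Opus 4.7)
The plan is to follow the proof of \cite[Lemma~6.1]{MR0488073} almost verbatim, replacing complex analyticity with $d$-regularity wherever it is invoked. The technical inputs are all in place: Proposition~\ref{Fdr} together with Theorem~\ref{fibthdr} gives that $\phi_F = F/\|F\|$ is a Milnor fibration; Lemma~\ref{vbmo} provides a vector field $v_1$ on a punctured Milnor ball for $f$ that lies over the radial field on $\R^2$ and along which $\|x\|$ strictly increases; the standard radial field on $\C$ plays the analogous role for $\rho_r$. Theorem~\ref{rsuskn} already identifies $(\Sp{n+1}_\varepsilon, \LF)$ as the $r$-fold cyclic suspension of $(\Sp{n-1}_\varepsilon, L_f)$, which is the geometric backbone of the join structure.

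To construct $\alpha$, given $x \in \F_f$, $y \in \F_{\rho_r}$ and $t \in [0,1]$, I would use the reverse-time flow of $v_1$ starting at $x$ (resp.\ of the radial field on $\C$ starting at $y$) to produce a point $\tilde x_t$ with $\|\tilde x_t\| = \varepsilon\sqrt{1-t}$ (resp.\ $\tilde y_t$ with $|\tilde y_t| = \varepsilon\sqrt{t}$); along the first flow $f(\tilde x_s)$ remains on the positive real axis by the defining property of $v_1$, and similarly $\tilde y_s^r$ stays positive real. Then $(\tilde x_t, \tilde y_t) \in \Sp{n+1}_\varepsilon$ and $F(\tilde x_t, \tilde y_t) = f(\tilde x_t) + \tilde y_t^r \in \R^+$, so this point lies in $\F$. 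Set $\alpha([x,t,y]) = (\tilde x_t, \tilde y_t)$. At $t = 0$ one has $\tilde y_0 = 0$ and $\alpha$ is independent of $y$; at $t = 1$ symmetrically $\tilde x_1 = 0$; hence $\alpha$ descends to the join $\F_f \ast \F_{\rho_r}$.

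That $\alpha$ is a homotopy equivalence follows from an explicit deformation retraction of $\F$ onto its image, realised by flowing a general point $(x,z) \in \F$ by the product of the two vector fields until its radial ratio $\|x\|/|z|$ matches that of an image point; this is essentially the argument of \cite[Lemma~6.1]{MR0488073}, which uses only the existence and the two properties of these vector fields, not the complex structure. Monodromy compatibility is then immediate from the splitting $DF = D_x f \oplus D_z \rho_r$: the lift by $DF$ of the canonical unit vector field on $\Sp{1}$ may be chosen as the direct sum of the lifts for $f$ and for $\rho_r$, so its time-$2\pi$ first-return on $\F$, restricted to the image of $\alpha$, splits as the join of $h_f$ and $h_{\rho_r}$. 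The main obstacle will be continuity of $\alpha$ at the corners $t \in \{0,1\}$ of the join, where one of the two flow lines must be extended through the origin (precisely the point where the vector field from Lemma~\ref{vbmo} is not defined); this is where $d$-regularity of $f$, rather than mere smoothness, is essential, as the transversality conditions in Definition~\ref{dreg} are exactly what guarantee the required limiting behaviour.
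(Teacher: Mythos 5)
The first thing to say is that the paper contains no proof of Theorem~\ref{jointh} at all: the statement is preceded only by the remark that it is ``an adaptation of \cite[Lemma~6.1]{MR0488073}'', to be read against the apparatus already built (Proposition~\ref{Fdr}, Theorem~\ref{fibthdr}, Lemma~\ref{vbmo}, Theorem~\ref{rsuskn}). Your plan uses exactly those ingredients, so it is in the intended spirit, but it takes the harder of the two available routes: you re-derive the content of \cite[Lemma~6.1]{MR0488073} analytically by an explicit flow construction, whereas that lemma is a purely topological statement about the open book of a knot product, applicable as a black box once one knows (a) that $\kn{n+1}{\LF}$ is the $r$-fold cyclic suspension of $\kn{n-1}{L_f}$ --- this is Theorem~\ref{rsuskn} --- and (b) that the diffeomorphism constructed in the proof of Theorem~\ref{rsuskn} carries the open book $\phi_F=F/\norm{F}$ (which exists by Proposition~\ref{Fdr} and Theorem~\ref{fibthdr}) to the product open book of the suspension, with $\F_{\rho_r}$ equal to $r$ points and $h_{\rho_r}$ their cyclic permutation. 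Point (b) is the one genuinely new verification in the $d$-regular setting, and neither your sketch nor the paper isolates it; if you go the explicit route you are in effect proving (b) and \cite[Lemma~6.1]{MR0488073} simultaneously, which is more work but legitimate.

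Within the explicit construction there are two soft spots. First, conditions (I)--(II) of Lemma~\ref{vbmo} give only that $\norm{x}$ is strictly monotone along trajectories; that the backward trajectory through $x\in\F_f$ actually attains every radius in $(0,\varepsilon]$ requires an extra argument (a limit-set argument using that $\langle v_1,\vc{x}\rangle>0$ on all of $\B{n}_{\varepsilon}\setminus\{0\}$, or, more simply, a rescaling of $v_1$ by a positive function so that $\frac{d}{ds}\norm{x(s)}^2$ is constant --- this preserves the property that $D_xf(v_1)$ is a \emph{positive multiple} of $\vc{f(x)}$, which is all you need to keep $f$ on its ray). Once that is settled, continuity at the $t=1$ corner is automatic from $\norm{\tilde x_t}=\varepsilon\sqrt{1-t}\to 0$; it is not the place where $d$-regularity enters, contrary to your closing remark --- $d$-regularity is consumed earlier, in producing the fibration and the vector field. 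Second, and more seriously, your deformation retraction of $\F$ onto the image of $\alpha$ only adjusts the ratio $\norm{x}/\va{z}$, but a general point of $\F$ satisfies $f(x)+z^r\in\R^{+}$ with the two summands individually non-real; the retraction must also rotate the arguments of $f(x)$ and $z^r$ towards the positive real axis while keeping their sum's argument fixed, i.e.\ it is the retraction of the affine line $\{a+b=\delta\}$ onto the segment from $(\delta,0)$ to $(0,\delta)$, lifted through the map $(x,z)\mapsto(f(x),z^r)$. As written, your retraction does not reach all of $\F$, so surjectivity up to homotopy of $\alpha$ is not established. Both issues are repairable, but they must be addressed before the sketch is a proof.
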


\section{Open books given by the Milnor fibration $\Phi_F$}
In this section we recall the algorithm that allows us to compute the topology of the link $L_F$ and we use Theorem~\ref{jointh} in some examples, in order to see how is the open book decomposition of $\Sp{5}$ given by the Milnor fibration of $F$.

Let $f,g \colon (\C^2,0) \to (\C,0)$ be two complex analytic germs such that the real analytic germ $\fbg \colon (\C^2,0) \to (\C,0)$ has an isolated singularity at the origin. Let $F \colon (\C^3,0) \to (\C,0)$ be the real analytic germ defined by
\begin{equation*}
F(x,y,z) = \fbgplusr
\end{equation*}
with $r \in \Z{}^+$.

As we saw in Section \ref{Prim}, the following diagram commutes:
\begin{equation*}
\xymatrix{
\LF\setminus \Lp \ar[d]_{\mpp} \ar[r]^{\cP} & \Stres \setminus \Lfg \ar[d]^{\mpfg} \\
\Sp{1} \ar[r]^{\rho_r} & \Sp{1} \\
}
\end{equation*}
where $\Lfg$ is the link of $\fbg$, $\LF$ is the link of $F$, $\cP$ is the projection onto the first two coordinates,
$\Lp = \pr{\cP}(\Lfg)$, $\mpfg$ is the Milnor fibration of $\fbg$, $\mpp=\frac{z}{\mid z \mid}$ and $\rho_r(z)=z^r$.

With the tools given in the previous sections, we are able to give a description of the link $\LF$ as a graph manifold in terms of the link $\Lfg$ and the monodromy $h$ of the Milnor fibration $\mpfg$ as follows:

\begin{enumerate}
\item[\textbf{Step 1.}] We compute the plumbing tree $\Gamma_{\fbg}$ where each vertex $i$ is weighted by $m_i = m^f_i - m^g_i$ and change the negative multiplicities into positive ones.
\item[\textbf{Step 2.}] Using that the monodromy $h$ of the Milnor fibration $\Gamma_{\fbg}$ is a quasi-periodic diffeomorphism of the corresponding Milnor fibre $\F$, we compute the Nielsen graph $\Ng{h}$.
\item[\textbf{Step 3.}] We compute the Nielsen graph $\Ng{\mtp{h}{r}}$ of the diffeomorphism $\mtp{h}{r}$, which is a representative of the monodromy of the open book fibration $\mpp$. This allows us to describe $L_F$ as an open book.
\item[\textbf{Step 4.}] Applying Theorem~\ref{mero}, we compute the graph $\W(\Op{\mtp{h}{r}},L(\mtp{h}{r}))$ from the Nielsen graph $\Ng{\mtp{h}{r}}$, where $\Op{\mtp{h}{r}}=\LF$ and $L(\mtp{h}{r}))=\Lp$.
\item[\textbf{Step 5.}] We compute the plumbing tree $\Gamma$ such that $\LF= \partial P(\Gamma)$, where $P(\Gamma)$ is the plumbing manifold obtained from $\Gamma$.  
\end{enumerate}

\subsection{Examples}\label{exam}

In this section we show examples of functions $F$ such that the Milnor fibration gives an open book decomposition of $\Sp{5}$ that cannot come from complex singularities. To obtain this, first we describe the link $L_F$ as a graph manifold with the algorithm, then we apply (in some cases) Theorem~\ref{jointh} in order to describe the homotopy type of the corresponding Milnor fibre.

\begin{exa}
Let $f \colon (\C^2,0) \to (\C,0)$ be the complex analytic germ defined by $f(x,y)=x^2+y^7$ and let $g \colon (\C^2,0) \to (\C,0)$ be the complex analytic germ defined by $g(x,y)=x^5+y^2$. Let $F \colon (\C^3,0) \to (\C,0)$ be the germ defined by
\begin{equation*}
F(x,y,z) = (x^2+y^7)\overline{(x^5+y^2)} + z^3 \ .
\end{equation*}

\subsubsection*{First step:} 
The resolution graph $A(fg)$, where the multiplicity $m_i = m^f_i+m^g_i$ at the vertex $i$ appears as $\ensuremath{\binom{m^f_i}{m^g_i}}$ is given by
\begin{center}
\begin{tikzpicture}[xscale=.45,yscale=.3]
\draw[black] (0cm,0cm) -- (3cm,3cm) -- (6cm,0cm) -- (18cm,0cm) -- (21cm,3cm) -- (24cm,0cm) (3cm,3cm) -- (3cm,7cm) (21cm,3cm) -- (21cm,7cm) (2.8cm,6.5cm) -- (3cm,7cm) -- (3.2cm,6.5cm) (20.8cm,6.5cm) -- (21cm,7cm) -- (21.2cm,6.5cm);
\filldraw (0cm,0cm) circle (4pt) (3cm,3cm) circle (4pt) (6cm,0cm) circle (4pt) (10cm,0cm) circle (4pt) (14cm,0cm) circle (4pt) (18cm,0cm) circle (4pt) (21cm,3cm) circle (4pt) (24cm,0cm) circle (4pt);
\draw (-.3cm,.7cm) node {$-2$}
(3.7cm,3.6cm) node {$-1$}
(6.3cm,.7cm) node {$-3$}
(10cm,.7cm) node {$-2$}
(14cm,.7cm) node {$-3$}
(17.7cm,.7cm) node {$-3$}
(21.7,3.6cm) node {$-1$}
(24.3cm,.7cm) node {$-2$};
\draw (0cm,-1.4cm) node [scale=1.3] {$\binom{7}{2}$}
(3cm,1.3cm) node [scale=1.3] {$\binom{14}{4}$}
(6cm,-1.4cm) node [scale=1.3] {$\binom{6}{2}$}
(10cm,-1.4cm) node [scale=1.3] {$\binom{4}{2}$}
(14cm,-1.4cm) node [scale=1.3] {$\binom{2}{2}$}
(18cm,-1.4cm) node [scale=1.3] {$\binom{2}{4}$}
(21cm,1.3cm) node [scale=1.3] {$\binom{4}{10}$}
(24cm,-1.4cm) node [scale=1.3] {$\binom{2}{5}$}
(4cm,7cm) node {$(1)$}
(22cm,7cm) node {$(1)$};
\end{tikzpicture}
\end{center}
Then, the plumbing tree $\Gamma_{\fbg}$ is given by
\begin{center}
\begin{tikzpicture}[xscale=.45,yscale=.3]
\draw[black] (0cm,0cm) -- (3cm,3cm) -- (6cm,0cm) -- (18cm,0cm) -- (21cm,3cm) -- (24cm,0cm) (3cm,3cm) -- (3cm,7cm) (21cm,3cm) -- (21cm,7cm) (2.8cm,6.5cm) -- (3cm,7cm) -- (3.2cm,6.5cm) (20.8cm,6.5cm) -- (21cm,7cm) -- (21.2cm,6.5cm);
\filldraw (0cm,0cm) circle (4pt) (3cm,3cm) circle (4pt) (6cm,0cm) circle (4pt) (10cm,0cm) circle (4pt) (14cm,0cm) circle (4pt) (18cm,0cm) circle (4pt) (21cm,3cm) circle (4pt) (24cm,0cm) circle (4pt);
\draw (-.3cm,.7cm) node {$-2$}
(3.7cm,3.6cm) node {$-1$}
(6.3cm,.7cm) node {$-3$}
(10cm,.7cm) node {$-2$}
(14cm,.7cm) node {$-3$}
(17.7cm,.7cm) node {$-3$}
(21.7,3.6cm) node {$-1$}
(24.3cm,.7cm) node {$-2$};
\draw (0cm,-1.1cm) node {$(5)$}
(3cm,1.6cm) node {$(10)$}
(6cm,-1.1cm) node {$(4)$}
(10cm,-1.1cm) node {$(2)$}
(14cm,-1.1cm) node {$(0)$}
(18cm,-1.1cm) node {$(-2)$}
(21cm,1.6cm) node {$(-6)$}
(24cm,-1.1cm) node {$(-3)$}
(4cm,7cm) node {$(1)$}
(22.3cm,7cm) node {$(-1)$};
\end{tikzpicture}
\end{center}
After the change of orientation in the Seifert fibres, we obtain the following graph:
\begin{center}
\begin{tikzpicture}[xscale=.45,yscale=.3]
\draw[black] (0cm,0cm) -- (3cm,3cm) -- (6cm,0cm) -- (18cm,0cm) -- (21cm,3cm) -- (24cm,0cm) (3cm,3cm) -- (3cm,7cm) (21cm,3cm) -- (21cm,7cm) (2.8cm,6.5cm) -- (3cm,7cm) -- (3.2cm,6.5cm) (20.8cm,6.5cm) -- (21cm,7cm) -- (21.2cm,6.5cm);
\filldraw (0cm,0cm) circle (4pt) (3cm,3cm) circle (4pt) (6cm,0cm) circle (4pt) (10cm,0cm) circle (4pt) (14cm,0cm) circle (4pt) (18cm,0cm) circle (4pt) (21cm,3cm) circle (4pt) (24cm,0cm) circle (4pt);
\draw (-.3cm,.7cm) node {$-2$}
(3.7cm,3.6cm) node {$-1$}
(6.3cm,.7cm) node {$-3$}
(10cm,.7cm) node {$-2$}
(14cm,.7cm) node {$-3$}
(17.7cm,.7cm) node {$-3$}
(21.7,3.6cm) node {$-1$}
(24.3cm,.7cm) node {$-2$};
\draw (0cm,-1.1cm) node {$(5)$}
(3cm,1.6cm) node {$(10)$}
(6cm,-1.1cm) node {$(4)$}
(10cm,-1.1cm) node {$(2)$}
(14cm,-1.1cm) node {$(0)$}
(15.7cm,-.7cm) node {$-1$}
(18cm,-1.1cm) node {$(2)$}
(21cm,1.6cm) node {$(6)$}
(24cm,-1.1cm) node {$(3)$}
(4cm,7cm) node {$(1)$}
(22cm,7cm) node {$(1)$};
\end{tikzpicture}
\end{center}

\subsubsection*{Second step:}
From the last graph, we compute the Nielsen graph $\Ng{h}$ of the monodromy $h$ of the Milnor fibration $\mpfg$.
\begin{center}
\begin{tikzpicture}[xscale=1.1,yscale=.9]
\filldraw[black] (0cm,0cm) circle (2.8pt) (5cm,0cm) circle (2.8pt) (-2cm,-1.4cm) circle (1.6pt) (7cm,-1.4cm) circle (1.6pt);
\draw (-1.95cm,1.365cm) -- (0cm,0cm) --  (5cm,0cm) -- (6.95cm,1.365cm) (-2cm,-1.4cm) -- (0cm,0cm) (-2cm,1.4cm) circle (1.6pt) (7cm,-1.4cm) -- (5cm,0cm) (7cm,1.4cm) circle (1.6pt);
\draw (1cm,.3cm) node {$(5,-2)$}
(0cm,-.3cm) node {$1$}
(-.9cm,0cm) node {$[10,0]$}
(4cm,.3cm) node {$(3,2)$}
(5cm,-.3cm) node {$1$}
(5.8cm,0cm) node {$[6,0]$}
(-.4cm,.9cm) node {$(10,-1)$}
(-.5cm,-.9cm) node {$(2,1)$}
(5.4cm,.9cm) node {$(6,-1)$}
(5.5cm,-.9cm) node {$(2,1)$}
(2.5cm,-.5cm) node[scale=1.3] {$\frac{31}{30}$}
(-2.5cm,1.4cm) node [scale=1.3] {$-\frac{1}{10}$}
(7.5cm,1.4cm) node [scale=1.3] {$-\frac{1}{6}$}; 
\end{tikzpicture}
\end{center}

Here the twist $\frac{31}{30}$ can be computed in two ways: 
\begin{itemize}
\item The first one is to consider the partial twists between the nodes and get the twist as the sum of them,
\item the other one is to compute the $\alpha$ corresponding to the edge joining the nodes.
\end{itemize}
By~\cite[Th.~5.1]{NeuRay:plumb}, we have that
\begin{equation*}
\frac{\alpha}{\alpha - \beta} = 3 - \cfrac{1}{2 - \cfrac{1}{3 - \cfrac{1}{3}}}
\end{equation*}
then $\alpha=31$. Applying equation~\eqref{f3}, one obtains that
\begin{equation*}
t= \frac{\alpha}{m_j \lambda} = \frac{31}{(6)(5)} = \frac{31}{30} \ .
\end{equation*}

\subsubsection*{Third step:}
Let $F \colon (\C^3,0) \to (\C,0)$ be the real analytic germ defined by
\begin{equation*}
F(x,y,z) = (x^2+y^7)\overline{(x^5+y^2)} + z^3 \ .
\end{equation*}
The link $\LF$ has an open book fibration with binding $\Lp$ and monodromy $\mtp{h}{3}$. This monodromy is a quasi-periodic diffeomorphism, then we compute the Nielsen graph $\Ng{\mtp{h}{3}}$ from the graph $\Ng{h}$.
\begin{center}
\begin{tikzpicture}[xscale=1.1,yscale=.9]
\filldraw[black] (0cm,0cm) circle (2.8pt) (5cm,0cm) circle (2.8pt) (-2cm,-1.4cm) circle (1.6pt) (3.3cm,-1.752cm) circle (1.6pt) (4.9cm,-2.439cm) circle (1.6pt) (6.65cm,-1.799cm) circle (1.6pt);
\draw (-1.95cm,1.365cm) -- (0cm,0cm) --  (5cm,0cm) -- (5.975cm,2.25cm) (-2cm,-1.4cm) -- (0cm,0cm) (4.9cm,-2.439cm) -- (5cm,0cm) -- (3.3cm,-1.752cm) (6.65cm,-1.799cm) -- (5cm,0cm) (-2cm,1.4cm) circle (1.6pt) (6cm,2.3cm) circle (1.6pt);
\draw (1cm,.3cm) node {$(5,1)$}
(0cm,-.3cm) node {$1$}
(-.9cm,0cm) node {$[10,0]$}
(4cm,.3cm) node {$(1,1)$}
(4.9cm,.3cm) node {$1$}
(5.55cm,.2cm) node {$[2,0]$}
(-.4cm,.9cm) node {$(10,3)$}
(-.5cm,-.9cm) node {$(2,1)$}
(4.9cm,1.3cm) node {$(2,-1)$}
(5.35cm,-1.3cm) node {$(2,1)$}
(3.7cm,-.75cm) node {$(2,1)$}
(6.2cm,-.75cm) node {$(2,1)$}
(2.5cm,-.5cm) node[scale=1.3] {$\frac{31}{10}$}
(-2.5cm,1.4cm) node [scale=1.3] {$-\frac{3}{10}$}
(6.5cm,2.3cm) node [scale=1.3] {$-\frac{1}{2}$}; 
\end{tikzpicture}
\end{center}

\subsubsection*{Fourth step:}
Applying Theorem~\ref{mero}, we compute the graph $\W(\LF,\Lp)$ from the Nielsen graph $\Ng{\mtp{h}{3}}$:
\begin{center}
\begin{tikzpicture}[xscale=1.1,yscale=.9]
\filldraw[black] (0cm,0cm) circle (2.8pt) (5cm,0cm) circle (2.8pt) (-2cm,-1.4cm) circle (1.6pt) (3.3cm,-1.752cm) circle (1.6pt) (4.9cm,-2.439cm) circle (1.6pt) (6.65cm,-1.799cm) circle (1.6pt);
\draw (-2cm,1.4cm) -- (0cm,0cm) --  (5cm,0cm) -- (6cm,2.3cm) (-2cm,-1.4cm) -- (0cm,0cm) (4.9cm,-2.439cm) -- (5cm,0cm) -- (3.3cm,-1.752cm) (6.65cm,-1.799cm) -- (5cm,0cm) (-1.93cm,1.189cm) -- (-2cm,1.4cm) -- (-1.8cm,1.4cm) (5.8cm,2.1cm) -- (6cm,2.3cm) -- (6cm,2.05cm) (2.3cm,-.1cm) -- (2.5cm,0cm) -- (2.3cm,.1cm);
\draw (2.5cm,.4cm) node {$(-1,31,6)$}
(0cm,-.3cm) node {$1$}
(4.9cm,.3cm) node {$2$}
(-.5cm,.9cm) node {$(3,1)$}
(-.5cm,-.9cm) node {$(2,1)$}
(5cm,1.3cm) node {$(1,0)$}
(5.35cm,-1.3cm) node {$(2,1)$}
(3.7cm,-.75cm) node {$(2,1)$}
(6.2cm,-.75cm) node {$(2,1)$};
\end{tikzpicture}
\end{center}

\subsubsection*{Fifth step:}
From the graph $\W(\LF,\Lp)$ we compute the corresponding plumbing tree. First, we have the following equations:
\begin{equation*}
\frac{3}{3-1} = 2- \frac{1}{2}  = [2,2] \ , \quad \ \frac{2}{2-1} = 2 = [2] \ , \quad \ \frac{31}{31-6} = [2,2,2,2,7] \ ,
\end{equation*}
where 
\begin{equation*}
[b_1, \ldots, b_k] = b_1 - \cfrac{1}{b_2 - \cfrac{1}{\ddots \cfrac{}{-\cfrac{1}{b_k}}}} \ .
\end{equation*}
Then the plumbing tree $\Gamma$ is given by
\begin{center}
\begin{tikzpicture}[xscale=.92,yscale=.9]
\draw (135:80pt) -- (0pt,0pt) -- (240pt,0pt) -- +(45:40pt) (240pt,0pt) -- (280pt,0pt) (240pt,0pt) -- +(-45:40pt) (0pt,0pt) -- +(225:40pt);
\filldraw (135:80pt) circle (1.6pt) node [above] {$-2$} 
(135:40pt) circle (1.6pt) node [above] {$\ -2$}
(0pt,0pt) circle (1.6pt) node [above] {$\quad -2$}
+(225:40pt) circle (1.6pt) node [above] {$-2 \quad$}
(40pt,0pt) circle (1.6pt) node [above] {$-2$}
(80pt,0pt) circle (1.6pt) node [above] {$-2$}
(120pt,0pt) circle (1.6pt) node [above] {$-2$}
(160pt,0pt) circle (1.6pt) node [above] {$-2$}
(200pt,0pt) circle (1.6pt) node [above] {$-7$}
(240pt,0pt) circle (1.6pt) node [above] {$-2 \quad$}
+(45:40pt) circle (1.6pt) node [above] {$-2$} 
+(-45:40pt) circle (1.6pt) node [above] {$\quad -2$}
(280pt,0pt) circle (1.6pt) node [above] {$-2$};
\end{tikzpicture}
\end{center}
where $\LF \cong \partial P(\Gamma)$ and $P(\Gamma)$ is the four-manifold obtained by plumbing $2$-discs bundles according to $\Gamma$.

The plumbing $P(\Gamma)$ given by the plumbing tree $\Gamma$ contains in its interior an exceptional divisor $E$ as a strong deformation retract. Then the divisor $E$ can be blown down to a point, and we get a complex surface $V_\Gamma$ with a normal singularity at $0$. As in the proof of \cite[Theorem~4]{MR2922705}, we compute the canonical class $K$ of $V_\Gamma$ and obtain that $K$ has non-integer coefficients.

It follows that the singularity $(V_\Gamma,0)$ is not numerically Gorenstein and therefore it is not Gorenstein. Then there is not a complex analytic germ $G \colon (\C^3,0) \to (\C,0)$ with isolated singularity at the origin such that the link $L_G$ is homeomorphic to the link $L_F$.
\end{exa}

\begin{exa}\label{exfeo}
Let $f \colon (\C^2,0) \to (\C,0)$ be the complex analytic germ defined by $f(x,y)=(x^2+y^3)$ and let $g \colon (\C^2,0) \to (\C,0)$ be the complex analytic germ defined by $g(x,y)=(x^3+y^2)$. 
Let $F \colon (\C^3,0) \to (\C,0)$ be the germ defined by
\begin{equation*}
F(x,y,z) = (x^2+y^3)\overline{(x^3+y^2)} + z^2 \ .
\end{equation*}

The plumbing tree $\Gamma_{\fbg}$ is given in Figure  \ref{fig:ptrfbg}.
\begin{figure}[H]
\begin{center}
\begin{tikzpicture}[xscale=1.2,yscale=.9]
\draw (-1.8cm,-.7cm) -- (0cm,0cm) -- (1.8cm,-.7cm) -- (3.6cm,0cm) -- (5.4cm,-.7cm) (0cm,0cm) -- (0cm, 1.8cm) (3.6cm,0cm) -- (3.6cm,1.8cm) (-.1cm,1.6cm) -- (0cm,1.8cm) -- (.1cm,1.6cm) (3.5cm,1.6cm) -- (3.6cm,1.8cm) -- (3.7cm,1.6cm);
\filldraw[black] (-1.8cm,-.7cm) circle (2pt) 
(0cm,0cm) circle (2pt) 
(1.8cm,-.7cm) circle (2pt) 
(3.6cm,0cm) circle (2pt) 
(5.4cm,-.7cm) circle (2pt);
\draw (-1.5cm,-1.1cm) node {$(1)$} 
(-1.8cm,-.3cm) node {$-2$} 
(0cm,-.5cm) node {$(2)$} 
(.45cm,.2cm) node {$-1$} 
(.4cm,1.8cm) node {$(1)$} 
(1.8cm,-1.05cm) node {$(0)$} 
(1.8cm,-.35cm) node {$-5$} 
(3.6cm,-.5cm) node {$(-2)$} 
(4.05cm,.2cm) node {$-1$} 
(4.15cm,1.8cm) node {$(-1)$} 
(5cm,-1.1cm) node {$(-1)$} 
(5.4cm,-.35cm) node {$-2$};
\end{tikzpicture}
\end{center}
\caption{Plumbing tree $\Gamma_{\fbg}$ for $\fbgxy=(x^2+y^3)\overline{(x^3+y^2)}$.}
\label{fig:ptrfbg}
\end{figure}

After the change of orientation of the Seifert fibres as we made above, we obtain the graph in Figure~\ref{fig:choptrfbg}.
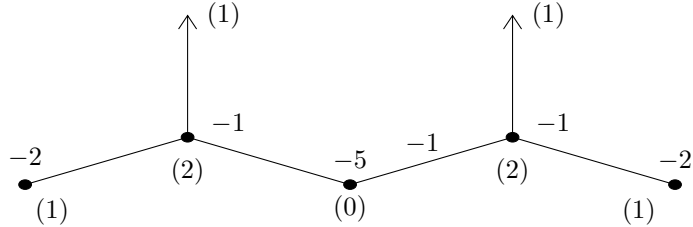
\begin{figure}[H]
\begin{center}
\begin{tikzpicture}[xscale=1.2,yscale=.9]
\draw (-1.8cm,-.7cm) -- (0cm,0cm) -- (1.8cm,-.7cm) -- (3.6cm,0cm) -- (5.4cm,-.7cm) (0cm,0cm) -- (0cm, 1.8cm) (3.6cm,0cm) -- (3.6cm,1.8cm) (-.1cm,1.6cm) -- (0cm,1.8cm) -- (.1cm,1.6cm) (3.5cm,1.6cm) -- (3.6cm,1.8cm) -- (3.7cm,1.6cm);
\filldraw[black] (-1.8cm,-.7cm) circle (2pt) 
(0cm,0cm) circle (2pt) 
(1.8cm,-.7cm) circle (2pt) 
(3.6cm,0cm) circle (2pt) 
(5.4cm,-.7cm) circle (2pt);
\draw (-1.5cm,-1.1cm) node {$(1)$} 
(-1.8cm,-.3cm) node {$-2$} 
(0cm,-.5cm) node {$(2)$} 
(.45cm,.2cm) node {$-1$} 
(.4cm,1.8cm) node {$(1)$} 
(1.8cm,-1.05cm) node {$(0)$} 
(1.8cm,-.35cm) node {$-5$}
(2.6cm, -.1cm) node {$-1$} 
(3.6cm,-.5cm) node {$(2)$} 
(4.05cm,.2cm) node {$-1$} 
(4cm,1.8cm) node {$(1)$} 
(5cm,-1.1cm) node {$(1)$} 
(5.4cm,-.35cm) node {$-2$};
\end{tikzpicture}
\end{center}
\caption{Plumbing tree obtained after the change of orientation in the plumbing tree $\Gamma_{\fbg}$ for $\fbgxy=(x^2+y^3)\overline{(x^3+y^2)}$.}
\label{fig:choptrfbg}
\end{figure}

By Theorem~\ref{mero}, the Nielsen graph $\Ng{h}$ of the monodromy $h$ of the Milnor fibration of $\fbg$ is as in Figure~\ref{fig:ngrfbg}.
\begin{figure}[H]
\begin{center}
\begin{tikzpicture}[xscale=1.1,yscale=.85]
\filldraw[black] (0cm,0cm) circle (2.8pt) (5cm,0cm) circle (2.8pt) (-2cm,-1.4cm) circle (1.6pt) (7cm,-1.4cm) circle (1.6pt);
\draw (-1.95cm,1.365cm) -- (0cm,0cm) --  (5cm,0cm) -- (6.95cm,1.365cm) (-2cm,-1.4cm) -- (0cm,0cm) (-2cm,1.4cm) circle (1.6pt) (7cm,-1.4cm) -- (5cm,0cm) (7cm,1.4cm) circle (1.6pt);
\draw (1cm,.3cm) node {$(1,1)$} 
(0cm,-.3cm) node {$1$} 
(-.9cm,0cm) node {$[2,0]$} 
(4cm,.3cm) node {$(1,1)$} 
(5cm,-.3cm) node {$1$} 
(5.9cm,0cm) node {$[2,0]$} 
(-.4cm,.9cm) node {$(2,1)$} 
(-.5cm,-.9cm) node {$(2,1)$}
(5.4cm,.9cm) node {$(2,1)$} 
(5.5cm,-.9cm) node {$(2,1)$} 
(2.5cm,-.4cm) node[scale=1.5] {$\frac{5}{2}$} 
(-2.5cm,1.4cm) node[scale=1.5] {-$\frac{1}{2}$} 
(7.5cm,1.4cm) node[scale=1.5] {-$\frac{1}{2}$}; 
\end{tikzpicture}
\end{center}
\caption{Nielsen graph $\Ng{h}$ for $\fbgxy=(x^2+y^3)\overline{(x^3+y^2)}$.}
\label{fig:ngrfbg}
\end{figure}

Note that, as we said, the twist is positive as a consequence of the change of sign of $\epsilon$ in the formula \eqref{f3}.

Now, the Nielsen graph $\Ng{\mtp{h}{2}}$ is given in Figure~\ref{fig:ngrfbgr2}.
\begin{figure}[H]
\begin{center}
\begin{tikzpicture}[xscale=1.1,yscale=.9]
\draw (0cm,0cm) .. controls (1cm,.8cm) and (3cm,.8cm) .. (4cm,0cm)
      node [near start, sloped, above] {$(1,1) \quad \ $} 
      node [near end, sloped, above] {$\quad \ (1,1)$}
      node [midway, above] {$5$}
      (0cm,0cm) .. controls (1cm,-.8cm) and (3cm,-.8cm) .. (4cm,0cm)
      node [near start, sloped, below] {$(1,1) \quad \ $} 
      node [near end, sloped, below] {$\quad \ (1,1)$}
      node [midway, below] {$5$};
\filldraw[black] (0cm,0cm) circle (2.8pt) (4cm,0cm) circle (2.8pt);
\draw (-1.95cm,1.365cm) -- (0cm,0cm) (4cm,0cm) -- (5.95cm,1.365cm) (-2cm,1.4cm) circle (1.6pt) (6cm,1.4cm) circle (1.6pt);
\draw 
(-1.45cm,.5cm) node {$(1,-1)$} 
(5.45cm,.5cm) node {$(1,-1)$}
(.6cm,0cm) node {$[1,0]$}
(3.4cm,0cm) node {$[1,0]$}
(-2.3cm,1.4cm) node {$-1$}
(6.3cm,1.4cm) node {$-1$}
(0cm,-.3cm) node {$1$}
(4cm,-.3cm) node {$1$};
\end{tikzpicture}
\end{center}
\caption{Nielsen graph $\Ng{h^2}$ of $h^2$ with $h$ the monodromy of the Milnor fibration of $\fbgxy=(x^2+y^3)\overline{(x^3+y^2)}$.}
\label{fig:ngrfbgr2}
\end{figure}

By Theorem~\ref{mero}, the graph $\W(\LF,\Lp)$ is the graph shown in Figure~\ref{fig:wg23322}.
\begin{figure}[H]
\begin{center}
\begin{tikzpicture}[xscale=1.1,yscale=.9]
\draw (0cm,0cm) .. controls (1cm,.8cm) and (3cm,.8cm) .. (4cm,0cm)
      node [midway, above] {$(-1,5,4)$}
      (0cm,0cm) .. controls (1cm,-.8cm) and (3cm,-.8cm) .. (4cm,0cm)
      node [midway, below] {$(-1,5,4)$};
\filldraw[black] (0cm,0cm) circle (2.8pt) (4cm,0cm) circle (2.8pt);
\draw[xshift=4cm] (0cm,0cm) -- ++(2cm,1.4cm) -- +(185:5pt) (0cm,0cm) ++(2cm,1.4cm) -- +(245:5pt);
\draw (0cm,0cm) -- ++(-2cm,1.4cm) -- +(290:5pt) (0cm,0cm) ++(-2cm,1.4cm) -- +(350:5pt);
\draw 
(-1.45cm,.5cm) node {$(1,0)$} 
(5.45cm,.5cm) node {$(1,0)$}
(0cm,-.28cm) node {$1$}
(4cm,-.28cm) node {$1$};
\end{tikzpicture}
\end{center}
\caption{Graph $\W(\LF,\Lp)$ for $F(x,y,z) = (x^2+y^3)\overline{(x^3+y^2)} + z^2$.}
\label{fig:wg23322}
\end{figure}
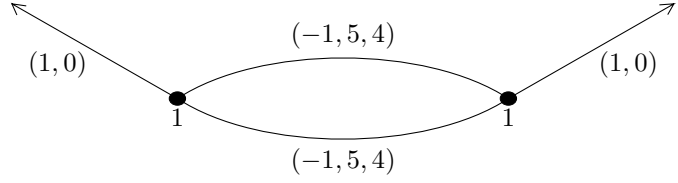
From the graph $\W(\LF,\Lp)$, the corresponding plumbing tree $\Gamma$ can be computed (see Figure~\ref{fig:pg23322}).
\begin{figure}[H]
\begin{center}
\begin{tikzpicture}[xscale=1.3,yscale=1]
\draw (0cm,0cm) .. controls (1cm,.8cm) and (3cm,.8cm) .. (4cm,0cm)
      (0cm,0cm) .. controls (1cm,-.8cm) and (3cm,-.8cm) .. (4cm,0cm);
\filldraw[black] (0cm,0cm) circle (2.5pt) (4cm,0cm) circle (2.5pt) (2cm,.6cm) circle (2.5pt) (2cm,-.6cm) circle (2.5pt);
\draw (2cm,.9cm) node {$-5$}
      (2cm,-.9cm) node {$-5$}
      (-.4cm,0cm) node {$-1$}
      (4.4cm,0cm) node {$-1$};
\end{tikzpicture}
\end{center}
\caption{Plumbing tree $\Gamma$ corresponding to graph $\W(\LF,\Lp)$ for $F(x,y,z) = (x^2+y^3)\overline{(x^3+y^2)} + z^2$.}
\label{fig:pg23322}
\end{figure}
As in the previous example, $\Gamma$ represents the exceptional divisor in the interior of the manifold $V_{\Gamma}$; \ie it is a graph resolution corresponding to a resolution $\widetilde{V}$ of a normal surface singularity $(V,0)$. Then, blowing down the two vertices with weights $-1$, one obtains the graph in Figure~\ref{fig:bpg23322}.
\begin{figure}[H]
\begin{center}
\begin{tikzpicture}[yscale=.7,xscale=.8,rotate=90]
\draw (0cm,0cm) .. controls (1cm,.8cm) and (3cm,.8cm) .. (4cm,0cm)
      (0cm,0cm) .. controls (1cm,-.8cm) and (3cm,-.8cm) .. (4cm,0cm);
\filldraw[black] (0cm,0cm) circle (2.5pt) (4cm,0cm) circle (2.5pt);
\draw (-.4cm,0cm) node {$-3$}
      (4.4cm,0cm) node {$-3$};
\end{tikzpicture}
\end{center}
\caption{Plumbing tree $\Gamma$ corresponding to graph $\W(\LF,\Lp)$ for $F(x,y,z) = (x^2+y^3)\overline{(x^3+y^2)} + z^2$.}
\label{fig:bpg23322}
\end{figure}
This graph is also the plumbing tree for $L_G$, where $G \colon (\C^3,0) \to (\C,0)$ is the germ
\begin{equation*}
G(x,y,z) = (x^2+y^3)(x^3+y^2) + z^2
\end{equation*}
as is stated in \cite[\S~6, Examples]{MR1709489}. Then the link $L_F$ is realisable by an holomorphic function from $\C^3$ to $\C$.

Now we proceed to see if the open book fibrations given by $F$ and $G$ are equivalent. In order to see if the  Milnor fibre $\F_F$ is diffeomorphic to the Milnor fibre $\F_G$, we compute the genus of $\F_F$ and the genus of $\F_G$.

By the decorated plumbing tree given in Figure~\ref{fig:choptrfbg}, the Milnor fibre $(\F_{\fbg})_i$ is the $m_i$-covering of $V_i$ (see Proposition~\ref{order}), then
\begin{equation*}
\chi ((\F_{\fbg})_i) = m_i \chi (V_i) \ .
\end{equation*}
As $V_i$ is a cylinder or a disc for the vertices $v_i$ with valence $2$ and $1$ respectively, our principal interest are the nodes (vertices with valence $\geq 3$); let $v_i$ be a node in the graph in Figure~\ref{fig:choptrfbg}, then
\begin{equation*}
\chi ((\F_{\fbg})_i) = 2 \chi (V_i) \ = 2 (-1) = -2.
\end{equation*}
Then, the genus of $\F_i$ is $0$. ``Gluing'' the pieces $(\F_{\fbg})_i$ for all $i$, we obtain a surface of genus $1$ with two boundary components.

Analogously, for the Milnor fibre $(\F_{fg})_i$ we have
\begin{equation*}
\chi ((\F_{fg})_i) = 10 \chi (V_i) \ = 10 (-1) = -10.
\end{equation*}
Then the genus of $(\F_{fg})_i$ is $2$. ``Gluing'' the pieces $(\F_{fg})_i$ for all $i$, we obtain a surface of genus $5$ with two boundary components.

Thus, the join of $r$ points with $\F_{\fbg}$ cannot be the same as the join of $r$ points with $\F_{fg}$ and the open book decompositions given by the Milnor fibrations of $F$ and $G$ are not equivalent.

However remains open the question if the link $\LF$ is homeomorphic to the link of other complex singularity with equivalent Milnor fibration to $F$.
\end{exa}

\begin{exa}
Let $f \colon (\C^2,0) \to (\C,0)$ be the complex analytic germ defined by $f(x,y)=x^3+y^5$ and let $g \colon (\C^2,0) \to (\C,0)$ be the complex analytic germ defined by $g(x,y)=x^7+y^2$.

\subsubsection*{First step:} 
The resolution graph $A(fg)$, where the multiplicity $m_i = m^f_i+m^g_i$ at the vertex $i$ appears as $\ensuremath{\binom{m^f_i}{m^g_i}}$ is given by
\begin{center}
\begin{tikzpicture}[xscale=.45,yscale=.3]
\draw[black] (0cm,0cm) -- (3cm,3cm) -- (6cm,0cm) -- (18cm,0cm) -- (21cm,3cm) -- (24cm,0cm) (3cm,3cm) -- (3cm,7cm) (21cm,3cm) -- (21cm,7cm) (2.8cm,6.5cm) -- (3cm,7cm) -- (3.2cm,6.5cm) (20.8cm,6.5cm) -- (21cm,7cm) -- (21.2cm,6.5cm);
\filldraw (0cm,0cm) circle (4pt) (3cm,3cm) circle (4pt) (6cm,0cm) circle (4pt) (10cm,0cm) circle (4pt) (14cm,0cm) circle (4pt) (18cm,0cm) circle (4pt) (21cm,3cm) circle (4pt) (24cm,0cm) circle (4pt);
\draw (-.3cm,.7cm) node {$-3$}
(3.7cm,3.6cm) node {$-1$}
(6.3cm,.7cm) node {$-2$}
(10cm,.7cm) node {$-4$}
(14cm,.7cm) node {$-2$}
(17.7cm,.7cm) node {$-3$}
(21.7,3.6cm) node {$-1$}
(24.3cm,.7cm) node {$-2$};
\draw (0cm,-1.4cm) node [scale=1.3] {$\binom{5}{2}$}
(3cm,1.3cm) node [scale=1.3] {$\binom{15}{6}$}
(6cm,-1.4cm) node [scale=1.3] {$\binom{9}{4}$}
(10cm,-1.4cm) node [scale=1.3] {$\binom{3}{2}$}
(14cm,-1.4cm) node [scale=1.3] {$\binom{3}{4}$}
(18cm,-1.4cm) node [scale=1.3] {$\binom{3}{6}$}
(21cm,1.3cm) node [scale=1.3] {$\binom{6}{14}$}
(24cm,-1.4cm) node [scale=1.3] {$\binom{3}{7}$}
(4cm,7cm) node {$(1)$}
(22cm,7cm) node {$(1)$};
\end{tikzpicture}
\end{center}
Then, the plumbing tree $\Gamma_{\fbg}$ is given by
\begin{center}
\begin{tikzpicture}[xscale=.45,yscale=.3]
\draw[black] (0cm,0cm) -- (3cm,3cm) -- (6cm,0cm) -- (18cm,0cm) -- (21cm,3cm) -- (24cm,0cm) (3cm,3cm) -- (3cm,7cm) (21cm,3cm) -- (21cm,7cm) (2.8cm,6.5cm) -- (3cm,7cm) -- (3.2cm,6.5cm) (20.8cm,6.5cm) -- (21cm,7cm) -- (21.2cm,6.5cm);
\filldraw (0cm,0cm) circle (4pt) (3cm,3cm) circle (4pt) (6cm,0cm) circle (4pt) (10cm,0cm) circle (4pt) (14cm,0cm) circle (4pt) (18cm,0cm) circle (4pt) (21cm,3cm) circle (4pt) (24cm,0cm) circle (4pt);
\draw (-.3cm,.7cm) node {$-3$}
(3.7cm,3.6cm) node {$-1$}
(6.3cm,.7cm) node {$-2$}
(10cm,.7cm) node {$-4$}
(14cm,.7cm) node {$-2$}
(17.7cm,.7cm) node {$-3$}
(21.7,3.6cm) node {$-1$}
(24.3cm,.7cm) node {$-2$};
\draw (0cm,-1.1cm) node {$(3)$}
(3cm,1.6cm) node {$(9)$}
(6cm,-1.1cm) node {$(5)$}
(10cm,-1.1cm) node {$(1)$}
(14cm,-1.1cm) node {$(-1)$}
(18cm,-1.1cm) node {$(-3)$}
(21cm,1.6cm) node {$(-8)$}
(24cm,-1.1cm) node {$(-4)$}
(4cm,7cm) node {$(1)$}
(22.3cm,7cm) node {$(-1)$};
\end{tikzpicture}
\end{center}
After the change of orientation in the Seifert fibres, we obtain the following graph:
\begin{figure}[H]
\begin{center}
\begin{tikzpicture}[xscale=.45,yscale=.3]
\draw[black] (0cm,0cm) -- (3cm,3cm) -- (6cm,0cm) -- (18cm,0cm) -- (21cm,3cm) -- (24cm,0cm) (3cm,3cm) -- (3cm,7cm) (21cm,3cm) -- (21cm,7cm) (2.8cm,6.5cm) -- (3cm,7cm) -- (3.2cm,6.5cm) (20.8cm,6.5cm) -- (21cm,7cm) -- (21.2cm,6.5cm);
\filldraw (0cm,0cm) circle (4pt) (3cm,3cm) circle (4pt) (6cm,0cm) circle (4pt) (10cm,0cm) circle (4pt) (14cm,0cm) circle (4pt) (18cm,0cm) circle (4pt) (21cm,3cm) circle (4pt) (24cm,0cm) circle (4pt);
\draw (-.3cm,.7cm) node {$-3$}
(3.7cm,3.6cm) node {$-1$}
(6.3cm,.7cm) node {$-2$}
(10cm,.7cm) node {$-4$}
(14cm,.7cm) node {$-2$}
(17.7cm,.7cm) node {$-3$}
(21.7,3.6cm) node {$-1$}
(24.3cm,.7cm) node {$-2$};
\draw (0cm,-1.1cm) node {$(3)$}
(3cm,1.6cm) node {$(9)$}
(6cm,-1.1cm) node {$(5)$}
(10cm,-1.1cm) node {$(1)$}
(11.8cm,-.7cm) node {$-1$}
(14cm,-1.1cm) node {$(1)$}
(18cm,-1.1cm) node {$(3)$}
(21cm,1.6cm) node {$(8)$}
(24cm,-1.1cm) node {$(4)$}
(4cm,7cm) node {$(1)$}
(22cm,7cm) node {$(1)$};
\end{tikzpicture}
\end{center}
\caption{Plumbing tree obtained after change of orientation.}
\label{rgfbg}
\end{figure}
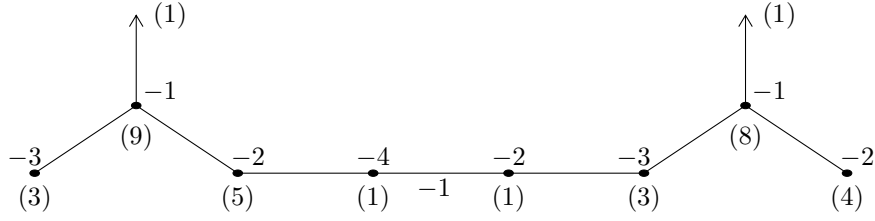

\subsubsection*{Second step:}
From the last graph, we compute the Nielsen graph $\Ng{h}$ of the monodromy $h$ of the Milnor fibration $\mpfg$.
\begin{center}
\begin{tikzpicture}[xscale=1.1,yscale=.9]
\filldraw[black] (0cm,0cm) circle (2.8pt) (5cm,0cm) circle (2.8pt) (-2cm,-1.4cm) circle (1.6pt) (7cm,-1.4cm) circle (1.6pt);
\draw (-1.95cm,1.365cm) -- (0cm,0cm) --  (5cm,0cm) -- (6.95cm,1.365cm) (-2cm,-1.4cm) -- (0cm,0cm) (-2cm,1.4cm) circle (1.6pt) (7cm,-1.4cm) -- (5cm,0cm) (7cm,1.4cm) circle (1.6pt);
\draw (1cm,.3cm) node {$(9,-5)$}
(0cm,-.3cm) node {$1$}
(-.9cm,0cm) node {$[9,0]$}
(4cm,.3cm) node {$(8,-3)$}
(5cm,-.3cm) node {$1$}
(5.8cm,0cm) node {$[8,0]$}
(-.4cm,.9cm) node {$(9,-1)$}
(-.5cm,-.9cm) node {$(3,-1)$}
(5.4cm,.9cm) node {$(8,-1)$}
(5.5cm,-.9cm) node {$(2,-1)$}
(2.5cm,-.5cm) node[scale=1.3] {$\frac{29}{72}$}
(-2.5cm,1.4cm) node [scale=1.3] {$-\frac{1}{9}$}
(7.5cm,1.4cm) node [scale=1.3] {$-\frac{1}{8}$}; 
\end{tikzpicture}
\end{center}
where the twist $\frac{29}{72}$ is computed by applying equation~\eqref{f3}:
\begin{equation*}
t= \frac{\alpha}{m_j \lambda} = \frac{29}{(8)(9)} = \frac{29}{72} \ .
\end{equation*}

\subsubsection*{Third step:}
Let $F \colon (\C^3,0) \to (\C,0)$ be the real analytic germ defined by
\begin{equation*}
F(x,y,z) = (x^3+y^5)\overline{(x^7+y^2)} + z^5 \ .
\end{equation*}
The link $\LF$ has an open book fibration with binding $\Lp$ and monodromy $\mtp{h}{5}$. We compute the Nielsen graph $\Ng{\mtp{h}{5}}$ from the graph $\Ng{h}$:
\begin{center}
\begin{tikzpicture}[xscale=1.1,yscale=.9]
\filldraw[black] (0cm,0cm) circle (2.8pt) (5cm,0cm) circle (2.8pt) (-2cm,-1.4cm) circle (1.6pt) (7cm,-1.4cm) circle (1.6pt);
\draw (-1.95cm,1.365cm) -- (0cm,0cm) --  (5cm,0cm) -- (6.95cm,1.365cm) (-2cm,-1.4cm) -- (0cm,0cm) (-2cm,1.4cm) circle (1.6pt) (7cm,-1.4cm) -- (5cm,0cm) (7cm,1.4cm) circle (1.6pt);
\draw (1cm,.3cm) node {$(9,8)$}
(0cm,-.4cm) node {$1$}
(-.9cm,0cm) node {$[9,0]$}
(4cm,.3cm) node {$(8,1)$}
(5cm,-.4cm) node {$1$}
(5.8cm,0cm) node {$[8,0]$}
(-.4cm,.9cm) node {$(9,7)$}
(-.5cm,-.9cm) node {$(3,1)$}
(5.4cm,.9cm) node {$(8,3)$}
(5.5cm,-.9cm) node {$(2,1)$}
(2.5cm,-.5cm) node[scale=1.3] {$\frac{145}{72}$}
(-2.5cm,1.4cm) node [scale=1.3] {$-\frac{5}{9}$}
(7.5cm,1.4cm) node [scale=1.3] {$-\frac{5}{8}$}; 
\end{tikzpicture}
\end{center}

\subsubsection*{Fourth step:}

Applying Theorem~\ref{mero}, we compute the graph $\W(\LF,\Lp)$ from the Nielsen graph $\Ng{\mtp{h}{5}}$:
\begin{center}
\begin{tikzpicture}[xscale=1.1,yscale=.9]
\filldraw[black] (0cm,0cm) circle (2.8pt) (5cm,0cm) circle (2.8pt) (-2cm,-1.4cm) circle (1.6pt) (7cm,-1.4cm) circle (1.6pt);
\draw (-2cm,1.4cm) -- (0cm,0cm) --  (5cm,0cm) -- (7cm,1.4cm) (-2cm,-1.4cm) -- (0cm,0cm) (-1.9cm,1.15cm) -- (-2cm,1.4cm) -- (-1.75cm,1.4cm) (7cm,-1.4cm) -- (5cm,0cm) (6.9cm,1.15cm) -- (7cm,1.4cm) -- (6.75cm,1.4cm) (2.3cm,.1cm) -- (2.5cm,0cm) -- (2.3cm,-.1cm);
\draw (2.5cm,.5cm) node {$(-1,145,128)$}
(0cm,-.4cm) node {$2$}
(5cm,-.4cm) node {$1$}
(-.4cm,.9cm) node {$(5,4)$}
(-.5cm,-.9cm) node {$(3,1)$}
(5.4cm,.9cm) node {$(5,2)$}
(5.5cm,-.9cm) node {$(2,1)$};
\end{tikzpicture}
\end{center}

\subsubsection*{Fifth step:}
From the graph $\W(\LF,\Lp)$ we can compute the corresponding plumbing tree. As we have the following equations:
\begin{gather*}
\frac{5}{5-4} = 5 = [5] \ , \quad \ \frac{3}{3-1} = 2-\frac{1}{2} = [2,2] \ , \quad \ \frac{5}{5-2} = 2-\frac{1}{3} = [2,3] \\
\frac{2}{2-1} = 2 = [2] \ , \quad \ \frac{145}{128} = [9,3,2,2,2,2,2,2,2] \ ,
\end{gather*}
then the plumbing tree $\Gamma$ is given by Figure~\ref{largpt},
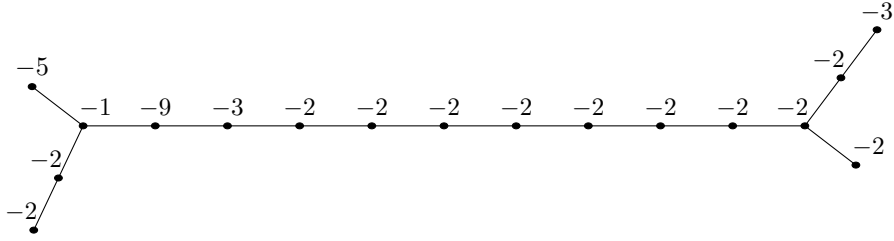
\begin{figure}[H]
\begin{center}
\begin{tikzpicture}[xscale=.91,yscale=.7]
\draw (135:30pt) -- (0pt,0pt) -- (300pt,0pt) -- +(60:60pt) (300pt,0pt) -- +(-45:30pt) (0pt,0pt) -- +(250:60pt);
\filldraw (135:30pt) circle (1.6pt) node [above] {$-5$} 
(0pt,0pt) circle (1.6pt) node [above,scale=.95] {$\quad -1$}
+(250:30pt) circle (1.6pt) node [above,scale=.95] {$-2 \quad$}
+(250:60pt) circle (1.6pt) node [above,scale=.95] {$-2 \quad$}
(30pt,0pt) circle (1.6pt) node [above,scale=.95] {$-9$}
(60pt,0pt) circle (1.6pt) node [above,scale=.95] {$-3$}
(90pt,0pt) circle (1.6pt) node [above,scale=.95] {$-2$}
(120pt,0pt) circle (1.6pt) node [above,scale=.95] {$-2$}
(150pt,0pt) circle (1.6pt) node [above,scale=.95] {$-2$}
(180pt,0pt) circle (1.6pt) node [above,scale=.95] {$-2$}
(210pt,0pt) circle (1.6pt) node [above,scale=.95] {$-2$}
(240pt,0pt) circle (1.6pt) node [above,scale=.95] {$-2$}
(270pt,0pt) circle (1.6pt) node [above,scale=.95] {$-2$}
(300pt,0pt) circle (1.6pt) node [above,scale=.95] {$-2 \quad$}
+(60:30pt) circle (1.6pt) node [above,scale=.95] {$-2 \quad$}
+(60:60pt) circle (1.6pt) node [above,scale=.95] {$-3$}
+(-45:30pt) circle (1.6pt) node [above,scale=.95] {$\quad -2$};
\end{tikzpicture}
\end{center}
\caption{The link $\LF$ as boundary of a plumbing.}
\label{largpt}
\end{figure}
\noindent where $\LF \cong \partial P(\Gamma)$ with $P(\Gamma)$ the four-manifold obtained by plumbing $2$-discs bundles according to $\Gamma$.

Now, as before, the plumbing $P(\Gamma)$ contains in its interior an exceptional divisor $E$ as a strong deformation retract, then $E$ can be blown down to a point, and we get a complex surface $V_\Gamma$ with a normal singularity at $0$. We compute the canonical class $K$ of $V_\Gamma$ and we obtain that
\begin{equation*}
K=(-27, -18, -9, -6, -4, -2, -1,\, 0,\, 1,\, 2,\, 3,\, 4,\, 5,\, 6,\, 3,\, 4,\, 2) \ ,
\end{equation*}
then $V_\Gamma$ is numerically Gorenstein. Then there could be a complex analytic germ $G \colon (\C^3,0) \to (\C,0)$ with isolated singularity at the origin such that the link $L_G$ is homeomorphic to the link $L_F$.

Following \cite[Section~5]{MR2922705}, we use an adaptation of the Laufer-Steenbrink formula as an obstruction for the equivalence of the Milnor fibration of $G$ and the Milnor fibration of $F$.

\begin{thm}[{\cite[\S~4, Cor~1]{MR713273}}]\label{cong}
Let $(V,0)$ be a normal Gorenstein complex surface singularity with link $L$. If $(V,0)$ is smoothable, then one has
\begin{equation*}
\chi(\widetilde V) + K^2 \equiv \chi(V^{\#}) \pmod{12}
\end{equation*}
where $V^{\#}$ is a smoothing of $V$, $\widetilde V$ is a good resolution of $V$ and $K$ is the canonical class of $\widetilde V$.
\end{thm}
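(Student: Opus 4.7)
The approach is to upgrade the mod $12$ congruence to the integral Laufer--Steenbrink equality
\begin{equation*}
\chi(V^\#) = \chi(\widetilde V) + K^2 + 12\, p_g(V,0),
\end{equation*}
where $p_g(V,0) := \dim_\C H^1(\widetilde V, \mathcal O_{\widetilde V})$ is the geometric genus of the singularity. Since $p_g$ is a non-negative integer, reducing this equality modulo $12$ yields the statement, so the problem is to establish the integral identity.

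The proof would proceed in two main steps. The first, due essentially to Laufer, is a Riemann--Roch/Noether computation on the good resolution $\widetilde V$. Applying Noether's formula to a closed $4$-manifold obtained by capping off $\widetilde V$ along its boundary $L$, and using that $\chi(L) = 0$ for every closed oriented $3$-manifold, one obtains
\begin{equation*}
12\, \chi(\mathcal O_{\widetilde V}) = K^2 + \chi(\widetilde V).
\end{equation*}
Hence the right-hand side of the target equality rewrites as $12\bigl(\chi(\mathcal O_{\widetilde V}) + p_g(V,0)\bigr)$, and it remains to compute $\chi(V^\#)$ in these terms.

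The second step is analytic. Since $V^\#$ is a Stein $4$-manifold one has $\chi(V^\#) = 1 + b_2(V^\#)$, so everything reduces to expressing $b_2(V^\#)$ as $12\, \chi(\mathcal O_{\widetilde V}) + 12\, p_g - 1$. This is supplied by Steenbrink's analysis of the limit mixed Hodge structure on the vanishing cohomology $H^2(V^\#)$ of a smoothing: the Gorenstein hypothesis on $(V,0)$ produces a Hodge-theoretic duality on the limit MHS under which the $(2,0)$-piece has dimension $p_g$, and summing the dimensions of all Hodge pieces of the limit MHS and comparing with the nearby cycle spectral sequence converging from coherent cohomology on $\widetilde V$ produces exactly the required count for $b_2(V^\#)$.

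The principal obstacle is Steenbrink's identification of the Hodge numbers of the limit MHS with concrete sheaf cohomology on $\widetilde V$: it depends both on the full machinery of limit mixed Hodge structures on nearby cycles and on the Gorenstein assumption to obtain the duality symmetries that isolate $p_g$. In the hypersurface case originally treated by Laufer, much of this apparatus can be bypassed, because the Milnor fibre is homotopy equivalent to a bouquet of $\mu$ spheres so that $b_2(V^\#) = \mu$ is purely topological, and the formula reduces to a direct comparison of $\mu$ with the resolution data through the adjunction formula on $\widetilde V$.
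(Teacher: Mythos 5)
The paper does not actually prove this statement: Theorem~\ref{cong} is quoted from Seade \cite{MR713273} and used as a black box, so there is no internal proof to measure yours against; I can only assess the proposal on its own. Your overall strategy --- establish the integral Laufer--Steenbrink identity $\chi(V^{\#})=\chi(\widetilde V)+K^{2}+12\,p_g(V,0)$ and reduce mod $12$ --- is legitimate, because that identity is indeed true for smoothings of normal Gorenstein surface singularities. The problem is that both intermediate identities you propose are false, already for the ordinary double point $x^{2}+y^{2}+z^{2}=0$: there $\widetilde V$ retracts onto a single $(-2)$-curve, so $\chi(\widetilde V)=2$, the canonical cycle is $K=0$, $p_g=0$, and $\chi(V^{\#})=1+\mu=2$. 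Hence $K^{2}+\chi(\widetilde V)=2$ is not divisible by $12$ and cannot equal $12\,\chi(\mathcal O_{\widetilde V})$ for any integer, and likewise $\chi(V^{\#})=2$ is not of the form $12\bigl(\chi(\mathcal O_{\widetilde V})+p_g\bigr)$. The underlying issues are that $\chi(\mathcal O_{\widetilde V})$ is not a well-defined finite number for the non-compact resolution ($H^{0}(\mathcal O_{\widetilde V})$ is infinite dimensional; the finite object is $R^{1}\pi_{*}\mathcal O_{\widetilde V}$, whose length is $p_g$), and that Noether's formula is a statement about closed complex surfaces: capping off $L$ by an arbitrary $4$-manifold does not produce one, and $\chi(L)=0$ does not repair this. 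A smaller but genuine gap in your second step: Steinness of $V^{\#}$ only gives the homotopy type of a $2$-complex, hence $\chi(V^{\#})=1-b_{1}+b_{2}$; the vanishing $b_{1}(V^{\#})=0$ is a theorem of Greuel and Steenbrink about smoothings of normal isolated singularities, not a formal consequence of Steinness.

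For comparison, the argument in the cited source avoids $p_g$ entirely and explains why only a mod $12$ statement is claimed: the Gorenstein hypothesis trivialises the canonical bundle on a punctured neighbourhood of $0$, hence near $L$, which allows one to glue $\widetilde V$ to the smoothing $V^{\#}$ (with reversed orientation) along $L$ compatibly with their almost-complex structures and to apply the divisibility $c_{1}^{2}+c_{2}\equiv 0\pmod{12}$ for closed almost-complex $4$-manifolds; since $K$ is trivial on the Stein smoothing, its contribution is just $\chi(V^{\#})$ and the congruence drops out. If you prefer to keep your stronger integral route, the correct replacement for your first step is Laufer's: globalise the smoothing to a family of compact surfaces, apply Noether's formula to the central and generic fibres, and observe that the difference localises at the singular point --- that is where the term $12\,p_g$ actually enters.
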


From Figure~\ref{rgfbg}, we obtain that the Milnor fibre of $\fbg$, $\F$, has genus $5$ and two boundary components, then
\begin{equation*}
\chi(\F)=2-2g-2=-10 \ .
\end{equation*}
Also $\F$ is homotopically equivalent to $\bigvee_{i=1}^k \Sp{1}_i$, where $k= 1 - \chi(\F)$. Let $\F_F$ be the Milnor fibre of $F$, then by Theorem~\ref{jointh} we have that
\begin{equation}\label{ecmf}
\chi(\F_F)= \chi \Biggl(\bigvee_{j=1}^{(3-1)k} \Sp{2}_j \Biggr) = 1 + (3-1)\left(1-(-10)\right)=23 \ .
\end{equation}

On the other hand, Figure~\ref{largpt} has $17$ vertices and $16$ edges, then
\begin{equation}\label{ecrg}
\chi(\widetilde V) = 2(17) - 16 = 18
\end{equation}
and
\begin{equation}\label{aicc}
K^2=K^T A K = -33 \ ,
\end{equation}
where $A$ is the intersection matrix of the plumbing in Figure~\ref{largpt}.

Combining equations~\eqref{ecmf}, \eqref{ecrg} and \eqref{aicc}, we obtain that
\begin{equation*}
\chi(\F_F) = 23 \not\equiv -15 = \chi(\widetilde V) + K^2 \pmod{12} \ .
\end{equation*}
Then, given a complex analytic germ $G \colon (\C^3,0) \to (\C,0)$ with isolated singularity at the origin such that the link $L_G$ is homeomorphic to the link $L_F$, the Milnor fibration of $G$ is not equivalent to the Milnor fibration of $F$.
\end{exa}

\begin{rmk}
In Example~\ref{exfeo}, Laufer-Steenbrink formula (which gives Theorem~\ref{cong}) does not work as an obstruction as in the last example. In that case $\chi(\F_F)$ and $\chi(\widetilde V) + K^2$ are congruent modulo $12$.
\end{rmk}

It remains to look for other kind of obstruction in order to compare the Milnor fibration of the function $F$ of Example~\ref{exfeo} with the Milnor fibration of a complex analytic germ $G \colon (\C^3,0) \to (\C,0)$.

\section*{Acknowledgements}
I am most grateful to Anne Pichon and Jos\'e Seade for their supervision and comments on this work. I also want to thank Professor Walter Neumann and Patrick Popescu-Pampu for all their suggestions which improved greatly the present article, and Jos\'e Luis Cisneros-Molina for helpful conversations and remarks.


\end{document}